\newtheorem{thm}{Theorem}[section]
\newtheorem{cor}[thm]{Corollary}
\newtheorem{lem}[thm]{Lemma}
\newtheorem{prop}[thm]{Proposition}
\theoremstyle{definition}
\newtheorem{defn}[thm]{Definition}
\newtheorem{rk}[thm]{Remark}
\newtheorem{ex}[thm]{Example}
\DeclareMathOperator{\Id}{Id}                 
\DeclareMathOperator{\Op}{Op}      
\DeclareMathOperator{\Res}{Res}         
\DeclareMathOperator{\res}{res}         
\DeclareMathOperator{\restor}{res^{\mathrm{tor}}}         
\DeclareMathOperator{\Real}{Re} 
\DeclareMathOperator{\supp}{supp}      
\DeclareMathOperator{\Tr}{Tr}                 
\DeclareMathOperator{\TR}{TR}                 
\DeclareMathOperator{\cutoff}{cut--off}
\newcommand{\A}{\mathcal{A}}              
\renewcommand{\a}{\alpha}                    
\renewcommand{\b}{\beta}                       
\newcommand{\B}{\mathcal{B}}              
\newcommand{\C}{\mathbb{C}}              
\newcommand{\del}{\partial}                    
\newcommand{\DD}{\mathcal{D}}           
\newcommand{\F}{\mathcal{F}}
\newcommand{\ga}{\gamma}                   
\newcommand{\N}{\mathbb{N}}
\newcommand{\Z}{\mathbb{Z}}
\newcommand{\Q}{\mathbb{Q}}
\renewcommand{\H}{\mathcal{H}}
\newcommand{\R}{\mathbb{R}}
\newcommand{\T}{\mathbb{T}}
\renewcommand {\S} {\mathbb S}
\newcommand{\Db}{\mathbf{\Delta}} 
\newcommand{\cutoffint}{-\hskip -10pt\int}
\newcommand{\cutoffsum}{-\hskip -4mm\sum}
\newcommand{\cutoffsumtor}{\textstyle{\cutoffsum_{\Z^n}^{\mathrm{tor}}}}
\renewcommand{\t}{\mathbf{t}}
\renewcommand{\k}{\mathbf{k}}
\newcommand {\e} {{\epsilon}}
\newcommand{\set}[1]{\{\,#1\,\}}               
\newcommand {\Ci} {{C^\infty}}
\newcommand {\Cl} {{C\ell}}
\newcommand{\wh}{\widehat}
\newcommand{\wt}{\widetilde}
\newcommand{\la}{\lambda}                
\newcommand{\norm}[1]{\left\lVert#1\right\rVert}    
\newcommand{\ol}{\overline}                  
\newcommand{\fp}{{\rm f.p.}}
\newcommand{\sg}{\sigma}
\renewcommand{\SS}{\mathcal{S}}
\newcommand{\Sbb}{\mathbb{S}}  
\renewcommand{\th}{\theta} 
\newcommand{\Tbb}{\mathbb{T}}
\newcommand{\twobytwosmall}[4]{\bigl(\begin{smallmatrix}#1 & #2 \\ #3 & #4
                            \end{smallmatrix}\bigr)} 
\begin{document}

\title{\bf The canonical trace and the noncommutative residue on the noncommutative torus}
\author{Cyril L\'evy, Carolina Neira Jim\'enez, Sylvie Paycha}

\maketitle

\begin{abstract}
Using a global symbol calculus for pseudodifferential operators on tori, we build a canonical trace on classical pseudodifferential operators on noncommutative tori in terms of a canonical discrete sum on the underlying toroidal symbols. We characterise the canonical trace on operators on the noncommutative torus as well as its underlying canonical discrete sum on symbols of fixed (resp.\ any) non--integer order. On the grounds of this uniqueness result, we prove that in the commutative setup, this canonical trace on the noncommutative torus reduces to Kontsevich and Vishik's canonical trace which is thereby identified with a discrete sum. A similar characterisation for the noncommutative residue on noncommutative tori as the unique trace which vanishes on trace--class operators generalises Fathizadeh and Wong's characterisation in so far as it includes the case of operators of fixed integer order. By means of the canonical trace, we derive defect formulae for regularized traces. The conformal invariance of the $\zeta$--function at zero of the Laplacian on the noncommutative torus is then a straightforward consequence. 
\end{abstract}

\tableofcontents

\section{Introduction}

Pseudodifferential operators on smooth manifolds are treated locally: to a local chart, one can associate a symbol of a given pseudodifferential operator as a smooth map on an open subset of $\R^n$. Only the local structure of $\R^n$ is used and there is no global notion of symbol of a pseudodifferential operator. This approach is natural for general smooth manifolds where one can hardly avoid local coordinates to extract geometrical information. However, on manifolds carrying more symmetries (Lie groups, homogeneous spaces) one can use this extra data to develop a richer, and global notion of symbol calculus of pseudodifferential operators \cite{RT4,NR}. 

In 1979, Agranovich \cite{A1} introduced such a calculus for pseudodifferential operators on the circle $\Sbb^1$, using Fourier series, and launched the notion of periodic symbol of pseudodifferential operators on the torus $\Tbb^n$. The general idea for the periodic quantisation on the torus can be summarized in the following way. If $a\in C^\infty(\Tbb^n)$,  one defines the discrete Fourier transform of $a$ as a function on the lattice $\Z^n$ (the Pontryagin dual of $\T^n$)
\[
 \F_{\Tbb^n}(a) (k) := \int_{\Tbb^n} e^{-2\pi i x\cdot k} a(x) dx.
\]

One then discretises the problem by using this Fourier transform instead of the Euclidean one in the very definition of a pseudodifferential operator on the torus. More precisely, the quantisation map is defined as
\[
 \Op(\sg) : a\mapsto \sum_{k\in \Z^n} e_k\, \sg(k) \, \F_{\T^n}(a)(k)
\] 
where $e_k(x):=e^{2\pi i x\cdot k}$. Operators of this type were called \emph{periodic} pseudodifferential operators. It turns out, and it is non--trivial, that periodic pseudodifferential operators actually coincide with pseudodifferential operators on the torus seen as a closed manifold \cite{A2,M,McL,TV}. What is actually new here, compared to the classical pseudodifferential calculus, is the possibility to invert the quantisation map, as it is injective, which leads to  a global (periodic) symbol calculus of pseudodifferential operators on the torus. Namely, if $A$ is a pseudodifferential operator, then the (global) symbol of $A$ is 
\[
\sg_A : \Z^n \to C^\infty(\T^n)\, , \quad k\mapsto A(e_k) e_{-k}\, .
\] 
Naturally, symbols on the torus are not maps from $\R^n$ to $C^\infty(\Tbb^n)$ (in contrast to the Euclidean case), but are actually defined on the Pontryagin dual $\Z^n$ of the torus. 
This discretization of the notion of symbol, which itself comes from the compactness of the torus as an abelian Lie group, calls for discrete--type analytic tools (finite difference operators, finite difference Leibniz formulae, etc.), which differ from the ones used in the global calculus on the Euclidean space $\R^n$ (see \cite{NR}).

The Lie group structure of the torus allows to apply harmonic analysis techniques directly to the pseudodifferential calculus. Such techniques used in the case of a torus, as well as extensions to other Lie groups ($SU(2)$ for example) have been further investigated by Ruzhansky and Turunen in \cite{RT1,RT2,RT3,RT4,T}. Aside from its elegance, the global calculus approach is useful for it has applications in hyperbolic partial differential equations, global hypoellipticity, $L^2$-boundedness, and numerical analysis (see \cite{RT4} and references therein). 

The goal of this article is to investigate traces on the global pseudodifferential calculus in the situation where the underlying manifold is now a noncommutative geometrical object, namely the noncommutative torus.

Connes' definition of noncommutative (compact, spin) manifold is based on the notion of spectral triple \cite{C2}. If $(\A,\H,D)$ is a spectral triple, $\A$ plays the role of the coordinate algebra (of smooth functions on the manifold), $\H$ is the Hilbert space of spinors, and $D$ is the (abstract) Dirac operator acting on $\H$. The idea is that the algebra $\A$ is not necessarily commutative. 
There are many examples of noncommutative spaces, and the noncommutative torus, described by the Fr\'echet algebra $\A_\th$, where $\th$ is the \emph{deformation matrix}, is probably the most simple one.
 
In 1980, Connes \cite{C1} defined a pseudodifferential calculus on the noncommutative torus, in the more general setting of $C^*$--dynamical systems (see also \cite{B1,B2}). The symbols of this calculus are maps from $\R^n$ into the algebra $\A_\th$. This calculus was used in \cite{CT} and \cite{FK1} to give a noncommutative version of the Gauss--Bonnet theorem, and more recently for the computation of the (noncommutative equivalent of the) scalar curvature \cite{CM2,FK2,FK3}. In \cite{FW} the notion of a noncommutative residue on classical pseudodifferential operators on the noncommutative two--torus was introduced, and it was proved that up to a constant multiple, it is the unique continuous trace on the algebra of such operators modulo infinitely smoothing operators.

However, since symbols are here defined on the whole space $\R^n$, the quantisation map is not injective as such. In order to recover a (global) symbol map, fully exploit techniques available on the space of symbols and construct non--singular traces, we modify the definition of symbols as in the commutative case. We define symbols as maps from the Pontryagin dual of $\T^n$, namely the standard lattice $\Z^n$, into the algebra $\A_\th$ of the noncommutative torus. Following the terminology of Ruzhansky and Turunen we call symbols on $\Z^n$ \emph{toroidal symbols} and their corresponding operators \emph{toroidal operators}.

We use this  global symbol calculus to construct and characterise the (noncommutative equivalent) of the canonical trace. Recall that on a closed smooth manifold the canonical trace is (up to a multiplicative factor) the unique linear extension of the ordinary trace to non trace--class classical pseudodifferential operators of non--integer order \cite{MSS} which vanishes on brackets in that class. On classical pseudodifferential operators of fixed non--integer order, a trace is a linear combination of the canonical trace and a singular trace called the leading symbol trace \cite{LN-J}. On the one hand, non--integer order operators build a class of operators on which the noncommutative residue vanishes but on the other hand, the canonical trace does not extend as a linear form to the algebra of integer order operators where the residue becomes a relevant linear form. This dichotomy  between the residue and the canonical trace was clarified in one of the authors' thesis work \cite{N-J} followed by \cite{LN-J} and carries out to the noncommutative setup as it shall become clear from our main classification result Theorem \ref{mainthm1}. In contrast with the canonical trace of Kontsevich and Vishik which is built from an integral of the symbol of  a pseudodifferential operator on a closed manifold, our canonical trace on   noncommutative tori is built from a discrete sum involving the symbol of a toroidal pseudodifferential operator. In the commutative setup, this global symbolic approach nevertheless leads to Kontsevich and Vishik's  canonical trace on ordinary tori seen as closed manifolds. Our results actually offer a generalisation of uniqueness results both from the commutative setup to the noncommutative setup and from the continuous to the toroidal setup. These characterisations are nevertheless derived under the assumption that the linear forms be either exotic (Definition \ref{exotic}) or $\ell^1$--continuous (Definition \ref{ell 1-continuous}), two assumptions that can probably be circumvented although they seem to be needed in our approach. In their classification of traces on the noncommutative two--torus which easily generalises to higher dimensional tori, Fathizadeh and Wong \cite{FW} required that the trace be singular and continuous instead of exotic. 

This paper is organised as follows. After some preliminaries on the noncommutative torus in Section \ref{secprem}, we extend in Section \ref{sectoroidal}  the global pseudodifferential calculus on the ordinary torus \cite{RT1} to noncommutative tori. This calculus has the remarkable feature that unlike the usual pseudodifferential calculus on closed manifolds, the quantisation map sets up a one to one correspondence between symbols and operators (Proposition \ref{propquantisation}). Via this bijection, the composition product on noncommutative toroidal pseudodifferential operators yields a star--product (\ref{starproduct}) on noncommutative toroidal symbols, just as the Weyl--Moyal product can be derived from the global Weyl quantisation map on $\R^n$.

We show (Theorem \ref{thmcomp}) that noncommutative toroidal pseudodifferential operators equipped with the composition of operators, build an algebra. Known regularity properties of pseudodifferential operators on compact manifolds generalise to toroidal pseudodifferential operators (Theorem \ref{thm:Sobolev}). In Section \ref{secextensionmap} we use smooth extensions of discrete symbols, a notion already introduced in the commutative setup \cite{RT1,RT4}. 
This allows to define the notion of extension map (Definition \ref{def:extension}) which associates to any discrete symbol $\sg$ a smooth extension of $\sg$. Extension maps provide a way us to transfer known concepts for symbols on $\R^n$ to noncommutative toroidal symbols such as quasihomogeneity (Proposition \ref{prop:quasihom}) and polyhomogeneity. In particular, we consider the subspace of noncommutative toroidal classical symbols (Definition \ref{defclass}) and  subclasses of that algebra such as the class of non--integer order classical noncommutative toroidal symbols and that of fixed order. We furthermore relate the star--product on toroidal symbols to the star--product on their extensions (Theorem \ref{thm:asympt}), a relation which is useful to prove traciality in the toroidal setup. 

Using an extension map, in Section \ref{sectraces} we build a canonical discrete sum on noncommutative toroidal non--integer order classical symbols from its integral counterpart on non--integer order classical symbols on $\R^n$. Throughout the paper we assume that $n>1$. Since traciality of linear maps on toroidal symbols implies $\Z^n$--translation invariance (Lemma \ref{lem:translationinv}), we derive the characterisation of traces from that of $\Z^n$--translation invariant linear forms on toroidal symbols. The corresponding uniqueness results (Theorems \ref{thm:charres} and \ref{thm:characressum}) are new to our knowledge and  interesting in their own right. The characterisation of $\Z^n$--translation invariant linear forms on toroidal symbols is in turn derived via an extension map from the characterisation of $\Z^n$--translation invariant linear forms on symbols on $\R^n$ (Proposition \ref{prop:charresRn}) already investigated in \cite[Proposition 5.40]{P2}.

The fact that the coefficient algebra is the noncommutative torus plays a decisive role in our approach. Indeed, the specific form of the derivations $\delta_j$ on ${\mathcal \A}_\theta$ given by (\ref{smalldeltaj}) is strongly used all along the paper, as e.g.\ in Lemma \ref{lem1} to prove the absolute summability of the quantization map and in Theorem \ref{thmcomp} to derive the star product $\circ_\theta$ from the composition of two quantized symbols. Also, the ``commutativity" relation for the Weyl elements given by formula (\ref{eq:UkUl}) is essential in the proof of the key Lemma \ref{lem:translationinv} which relates traciality to closedness and translation invariance of linear forms on toroidal symbols.

Section \ref{section: classification nct} presents our main result (Theorem \ref{mainthm1}), namely the characterisation of the canonical discrete sum (resp.\ the noncommutative residue) on noncommutative toroidal non--integer (resp.\ integer) order classical symbols and of the corresponding canonical trace (resp.\ noncommutative residue) on noncommutative toroidal non--integer (resp.\ integer) order classical pseudodifferential operators. Along with these results we provide refined characterisations on symbols and operators of fixed order similar to the ones derived in \cite{N-J} and \cite{LN-J}. The commutative counterpart of Theorem \ref{mainthm1} stated in Corollary \ref{maincor1} yields a characterisation of traces on toroidal symbols of fixed order. It also yields back known characterisations of the noncommutative residue \cite{W1,W2} and the canonical trace \cite{KV,LN-J,MSS,N-J} on certain classes of pseudodifferential operators on the torus seen as a particular closed manifold. In particular, this uniqueness result provides an alternative description of the canonical trace on tori in terms of a canonical discrete sum already investigated from another point of view in \cite{P2}.

The strategy that we follow for the proof of Theorem \ref{mainthm1} is based on several steps. First, we observe that the classification on the operator level is a direct consequence of the one on the symbol level since the quantisation map is an algebraic and topological isomorphism between noncommutative toroidal symbols and operators (Proposition \ref{propquantisation} and Theorem \ref{thmcomp}). In Section \ref{subsectransinv} we show that traces on noncommutative toroidal symbol spaces are closed and $\Z^n$--translation invariant (Lemma \ref{lem:translationinv}). This way we can reduce  the problem to a (commutative) classification of $\Z^n$--translation invariant linear forms on subsets of ordinary toroidal symbols. This classification described in Section \ref{subsecclassif} is interesting for its own sake and relies on an extension procedure from toroidal symbols to symbols on $\R^n$ combined with a classification of $\Z^n$--translation invariant linear forms on  symbols  on $\R^n$ (Theorem \ref{thm:charres} and Theorem \ref{thm:characressum}). This yields the (projective) uniqueness part of the theorem: any exotic trace on the algebra of integer order noncommutative toroidal classical symbols is proportional to the noncommutative residue whereas a trace on non--integer order noncommutative toroidal classical symbols which is continuous on $L^1$--symbols is proportional to the canonical sum. In the final step we check the tracial properties for these linear forms (Propositions \ref{prop:noncommutative residue trace}  and \ref{prop:canonical trace}).

The canonical trace on the noncommutative torus that we have characterised this way, provides a building block to construct a holomorphic calculus on pseudodifferential operators on the noncommutative torus. This is carried out in Section \ref{section:holomorphic} which presents an application of the previous constructions in so far as it provides (Theorem \ref{thm:KVPS}) an interpretation of the residue as a complex residue (formula (\ref{KV})) and gives an explicit formula of the finite part at zero of traces of holomorpic families in terms of a residue (formula (\ref{PSOp})). It further yields an expression (Corollary \ref{cor:defect}) for the obstruction that prevents linear extensions (see (\ref{eq:zetatrace})) of the ordinary trace built from a zeta regularisation procedure from defining traces and for the dependence on the regulator one uses to define these extensions. Although similar to the known ones in the usual pseudodifferential calculus on closed manifolds (see \cite{PS}), these formulae are not straightforward in our discretised set up since they require an appropriate notion of holomorphicity for families of toroidal symbols and operators (Definition \ref{def:holomorphicfamily}) compatible with the usual notion of holomorphic family of ordinary symbols (Proposition \ref{prop:holdiscreteversuscont}). From these defect formulae, we easily recover the conformal invariance of the $\zeta$--function at zero of the Laplacian shown in \cite{CT}, \cite{CM1,CM2} and \cite{FK1,FK2}. \\
Having this holomorphic calculus at hand raises the question how the canonical trace built in the present paper on pseudodifferential operators on the noncommutative torus $A_\theta$ relates to the canonical trace associated with the corresponding spectral triple
\[
 (A_\theta,\H\otimes \C^{2^{[n/2]}},\DD:=\delta_j \otimes \ga^j)
\] 
(see \eqref{spectraltriple} below) as built in \cite{P3}, an interesting issue for possible future work.

\section{Preliminaries on the noncommutative torus}
\label{secprem}

Let $\th$ be an $n\times n$ antisymmetric real matrix. Let $A_\th$\index{$A_\th$} denote the twisted group $C^*$--algebra $C^*(\Z^n,c)$ where $c$ is the following 2--cocycle for the abelian group $\Z^n$:
 \[
  c(k,l) = e^{- \pi i\,\langle k,\th l\rangle}\,,\qquad k,l\in \Z^n\, .
 \]

Recall that $C^*(\Z^n,c)$ is the enveloping $C^*$-algebra of the Banach twisted--convolution algebra $L^1(\Z^n,c)$\index{$L^1(\Z^n,c)$}. 

\begin{defn}
\label{def:coefficients}
 The \emph{coefficients} of an element $a$ of the algebra $L^1(\Z^n,c)$ are the elements of the unique sequence $\{a_k\}_{k\in \Z^n}$ in $\C$ such that $a$ can be decomposed as the convergent series $a=\sum_{k\in \Z^n} a_k U_k$ where the $(U_k)$\index{$U_k$} are the \emph{Weyl elements}. The Weyl elements are unitaries in $A_\th$ that satisfy $U_0=1$ and
\begin{equation}
\label{eq:UkUl}
 U_k U_{l} = c(k,l)U_{k+l}.
\end{equation}
\end{defn}

Note that $c(k,l)\, c(l,k)=1$ so that  
\[
 U_k U_{l} = \frac{c(k,l)}{ c(l,k) }U_lU_k= e^{-2 \pi i\,\langle k,\th l\rangle}U_lU_k.
\]
Thus when $\theta$ has integer entries, the Weyl elements commute.\\
Let us further observe that $U_k=e^{i\pi \sum_{l<m}k_l \theta_{lm} k_m}e_k$, where $e_k(x):=e^{2\pi i \langle x,k\rangle}$ is the $k$--th phase function \cite[Section 12.2]{G-BVF}.

We define for all $a\in L^1(\Z^n,c)$,
\begin{equation}\label{trace}
 \t(a) := a_0 \index{$\t$}
\end{equation}
and extend (by norm continuity) $\t$ as a (normalised) trace on $A_\th$.\\

\begin{rk}
 When $n=2$ and $\theta=\twobytwosmall{0}{\th_0}{-\th_0}{0}$ where $\th_0\notin \Q$, $\t$ is the unique normalised trace on $A_\th$ \cite[Corollary 50]{C3}. For general $n$ the trace $\t$ on $A_\th$ is unique whenever $\th$ is quite irrational with the definition of \cite[p. 537]{G-BVF}. However, the results in the following sections do not require any condition on $\th$ since 
 they do not rely on any uniqueness result for $\t$. 
\end{rk}

Let $\A_\th$\index{$\A_\th$} denote the involutive subalgebra of $L^1(\Z^n,c)$ consisting of series of the form $\sum_k a_k U_k$ where $(a_k)\in \SS(\Z^n)$, the vector space of sequences $(a_k)$ that decay faster than the inverse of any polynomial in $k$. We fix the following inner product on $A_\th$:
\[
 \langle a,b\rangle:=\t(ab^*) \qquad \forall\, a,b\in A_\th\, .
\]
Let $\H$ be the GNS Hilbert space corresponding to the previous inner product. The associated GNS representation $\pi$ yields an $n$--dimensional regular spectral triple which is the \emph{noncommutative $n$--torus with deformation matrix} $\th$:
\begin{equation}
\label{spectraltriple}
 (A_\th,\H\otimes \C^{2^{[n/2]}},\DD:=\delta_j \otimes \ga^j)
\end{equation}
where $A_\th$ acts as $\pi(a)\otimes \Id$ on $\H\otimes\C^{2^{[n/2]}} $ and where for all $j \in \set{1,\cdots,n}$, $\delta_j$ is the derivation on $\A_\th$ given by
 
\begin{equation}
 \label{smalldeltaj}
 \delta_j \left(\sum_{k\in \Z^n} a_k U_k\right):= \sum_{k\in \Z^n} a_k   k_j  U_k\, ,\index{$\delta_j$}
\end{equation}
considered as a densely defined operator in $\H$. The $\gamma^j$, $j \in \set{1,\cdots,n}$, stand for the Dirac matrices. The fact that $\t \circ \delta_j\ = 0$ (on $\A_\th$) for all $j\in \set{1,\cdots,n}$ will play an important role in the following.

The algebra $\A_\th$ can also be seen as the smooth elements of $A_\th$, for the continuous action of the torus $\Tbb^n=\R^n/\Z^n$ on $A_\th$ defined on the unitaries $(U_k)_k$ by $\a_s(U_k):=e^{2\pi i \,\langle s, k\rangle} U_k$, where $s\in \R^n$. The infinitesimal generators of this action are precisely the derivations $(2\pi i\delta_j)_j$.
Using these derivations, we equip $\A_\th$ with a structure of Fr\'echet $*$--algebra where the topology is given by the following seminorms:
\[
 p_\a(a) := \norm{ \delta^\a (a)}\, , \quad \a\in \N^n, \index{$p_\a$}
\]
where $\delta^\a := \delta_1^{\a_1}\cdots \delta_n^{\a_n}$\index{$\delta^\a$}, and $\norm{\cdot}$ is the norm associated to the scalar product $\langle \cdot,\cdot\rangle$. Let $\mathbf{\Delta}$ denote the operator $\sum_j \delta_j^2$ on $\A_\th$. We will also use the notation $\langle \xi\rangle:= \sqrt{|\xi|^2+1}$ for all $\xi\in \R^n$.

\begin{rk}
\label{rk:A=A0}
When $\th$ has integer entries, $\A_\th=\A_0$ is isomorphic to the (commutative) algebra (under pointwise multiplication) of smooth functions on the (commutative) torus $\A:=C^\infty(\Tbb^n)$. 
\end{rk}

\begin{lem}
\label{lemsemi}
 The seminorms $q_N$ ($N\in \N$), given by $q_{N}(a):= \sup_{k\in \Z^n} \langle k\rangle^N |a_k|$\index{$q_{N}$} for all $a=\sum_{k} a_k U_k\in \A_\th$, corresponding to $\SS(\Z^n)$, yield the same topology as the seminorms $p_{\a}$.
\end{lem}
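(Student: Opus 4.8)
The plan is to transport everything to the coefficient space. Identifying $a=\sum_{k\in\Z^n}a_kU_k\in\A_\th$ with the sequence $(a_k)\in\SS(\Z^n)$, I would first record that the Weyl elements form an orthonormal system in the GNS space $\H$: since $\th$ is antisymmetric, $c(l,-l)=e^{\pi i\langle l,\th l\rangle}=1$, so by \eqref{eq:UkUl} (and unitarity of the $U_l$) one has $U_l^*=U_l^{-1}=U_{-l}$ and $U_kU_l^*=c(k,-l)U_{k-l}$; hence by \eqref{trace},
\[
\langle U_k,U_l\rangle=\t(U_kU_l^*)=c(k,-l)\,\t(U_{k-l})=\delta_{kl}.
\]
Consequently $\norm{a}^2=\t(aa^*)=\sum_k|a_k|^2$, i.e.\ $\norm{\cdot}$ is the $\ell^2(\Z^n)$-norm of $(a_k)$, and by \eqref{smalldeltaj}, $\delta^\a a=\sum_k k^\a a_kU_k$ with $k^\a:=k_1^{\a_1}\cdots k_n^{\a_n}$, so
\[
p_\a(a)^2=\norm{\delta^\a a}^2=\sum_{k\in\Z^n}|k^\a|^2\,|a_k|^2.
\]
Thus the lemma reduces to the elementary fact that on $\SS(\Z^n)$ the weighted $\ell^2$-seminorms $(\sum_k|k^\a|^2|a_k|^2)^{1/2}$ and the weighted $\ell^\infty$-seminorms $\sup_k\langle k\rangle^N|a_k|$ generate the same topology.

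For the domination of the $q_N$ by the $p_\a$, I would expand $\langle k\rangle^{2N}=(1+|k|^2)^N=\sum_{|\b|\le N}c_\b\,k^{2\b}$ as a finite sum of monomials with non-negative coefficients $c_\b$; then
\[
q_N(a)^2=\sup_k\langle k\rangle^{2N}|a_k|^2\le\sum_k\langle k\rangle^{2N}|a_k|^2=\sum_{|\b|\le N}c_\b\,p_\b(a)^2,
\]
so $q_N\le C_N\max_{|\b|\le N}p_\b$ for a suitable constant $C_N$. Conversely, I would fix $\a$ and choose $N$ with $2N>2|\a|+n$; since $|k^\a|^2\langle k\rangle^{-2N}\le\langle k\rangle^{2(|\a|-N)}$ and $\sum_{k\in\Z^n}\langle k\rangle^{-s}<\infty$ whenever $s>n$, the constant $C_{\a,N}:=\sum_k|k^\a|^2\langle k\rangle^{-2N}$ is finite, and splitting $|k^\a|^2|a_k|^2=(|k^\a|^2\langle k\rangle^{-2N})(\langle k\rangle^{2N}|a_k|^2)$ and bounding the last factor by $q_N(a)^2$ gives $p_\a(a)^2\le C_{\a,N}\,q_N(a)^2$.

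These two mutual dominations show that the identity on $\A_\th$ is bicontinuous between the two topologies, so they coincide. I do not expect a genuine obstacle here: the only point that is not purely formal is the orthonormality of the Weyl elements, which is immediate from the antisymmetry of $\th$ together with \eqref{eq:UkUl} and \eqref{trace}; everything else is a routine comparison of weighted $\ell^2$ and $\ell^\infty$ norms on the lattice $\Z^n$.
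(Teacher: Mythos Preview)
Your proof is correct and follows essentially the same strategy as the paper's: mutual domination of the two seminorm families via elementary weighted-norm estimates on the lattice. The only cosmetic difference is that you first make the $\ell^2$ identification $p_\a(a)^2=\sum_k|k^\a|^2|a_k|^2$ explicit (via orthonormality of the $U_k$) and then argue symmetrically in both directions, whereas the paper bounds $p_\a(a)\le\sum_k|a_k|\langle k\rangle^{|\a|}$ by the triangle inequality for one direction and uses $\langle(1+\Db)^N a,U_k\rangle=\langle k\rangle^{2N}a_k$ together with Cauchy--Schwarz for the other.
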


\begin{proof}
  Let $\a\in \N^n$ and $a\in \A_\th$. We have $p_\a(a)\leq \sum_{k\in \Z^n} |a_k| \langle k \rangle^{|\a|}$, which implies the estimate
  \[
  p_\a(a)\leq \left(\sum_{k\in \Z^n}\langle k\rangle^{-n-1}\right) \, q_{|\a|+n+1}(a)\, .
  \]
  Let $N\in \N$. Since $\langle (1+\Db)^N (a),U_k\rangle= \langle k\rangle^{2N} a_k$, we get
 \[
  q_N(a)\leq q_{2N}(a) \leq \norm{(1+\Db)^N(a)}\,,
 \]
  so the result follows.
\end{proof}

\begin{defn} 
Let $j\in \set{1,\cdots,n}$ and $B$ be a given algebra. The \emph{forward difference operator} $\Delta_{j}$ is the linear map $B^{\Z^n}\to B^{\Z^n}$ defined by
\begin{equation}\label{bigdeltaj}
 \Delta_j (\sg) (k) := \sg(k+e_j)-\sg(k) \index{$\Delta_{j}$}
\end{equation}
where $(e_j)_{1\leq j \leq n}$ is the canonical basis of $\R^n$.

If $\a\in \N^n$, we set $\Delta^\a:=\Delta_1^{\a_1}\cdots \Delta_n^{\a_n}$,\index{$\Delta^\a$} which is also denoted by $\Delta_k^\a$ to specify the relevant variable. 
\end{defn}

\begin{rk}
It is a feature of the calculus of finite differences that $\Delta_j$ is \emph{not} a derivation of the algebra (with pointwise product) $B^{\Z^n}$. Indeed, if $\sg,\tau\in B^{\Z^n}$, then $\Delta_j (\sg\tau) =\Delta_j(\sg) T_{e_j}(\tau) +\sg \Delta_j(\tau)$, where $T_l(\tau):=\tau(\cdot+l)$.\index{$T_l$} However, there is a Leibniz formula adapted to this calculus (see \cite[Lemma 3.3.6]{RT4}, the proof extends directly to functions valued in arbitrary algebras): if $\sg,\tau\in B^{\Z^n}$, then
 \begin{equation}
  \label{LeibnizDisc}
  \Delta^\a (\sg\tau) = \sum_{\b\leq \a}\tbinom{\a}{\b} \Delta^\b(\sg) T_{\b} \Delta^{\a-\b} (\tau)\, .
  \end{equation}
\end{rk}

For a function $\sg :\Z^n\to\A_\th$, and $l\in \Z^n$, let $\sg_l$ denote the map from $\Z^n$ into $\C$ given by $\sg_l(k):= (\sg(k))_l$ (see Definition \ref{def:coefficients}). Hence, for any $k\in \Z^n$, $\sg(k) = \sum_l \sg_l(k) U_l$.

\section{Toroidal symbols and associated operators}
\label{sectoroidal}
 
\subsection{Toroidal symbols}

\begin{defn} Let $\B$ be a Fr\'echet algebra whose topology is associated with a given countable family of seminorms $(p_i)_{i\in I}$. A function $\sg : \Z^n \to \B$ is a \emph{(discrete) toroidal symbol of order} $m\in \R$ on $\B$, if for a countable set $I$, for all $(i,\b) \in I\times \N^n$, there is a constant $C_{i,\b}\in \R$, such that for all $k \in \Z^n$,
 \begin{equation}\label{pdeltasigma}
 p_i(\Delta^\b \sg (k)) \leq C_{i,\b} \langle k\rangle^{m-|\b|}\,.
 \end{equation}
The space of all discrete symbols of order $m$ on $\B$ is denoted by $S^m_{\B}(\Z^n)$.\index{$S^m_{\B}(\Z^n)$}
We define similarly the space of \emph{(smooth) toroidal symbols} $S^m_\B(\R^n)$\index{$S^m_{\B}(\R^n)$} on $\B$ by supposing $\sg\in C^\infty(\R^n,\B)$, and replacing $\Delta^\b$ by the usual operator $\del_\xi^\b$:
\begin{equation*}
 p_i(\del_\xi^\b \sg (\xi)) \leq C_{i,\b} \langle \xi\rangle^{m-|\b|}\, , \  \forall \xi\in\R^n.
\end{equation*}
We further define the space of all toroidal symbols on $\B$ as $S_{\B}(\Z^n):=\cup_{m\in \R} S^m_{\B}(\Z^n)$\index{$S_{\B}(\Z^n)$}, which is, for $\B=\A_\th$ by the discrete Leibniz formula \eqref{LeibnizDisc}, an $\R$--graded algebra under pointwise multiplication. For $\B=\C$, the space  $S_{\B}(\Z^n)$ corresponds to the space of symbols with constant coefficients. The ideal of smoothing symbols is $S^{-\infty}_{\B}(\Z^n):=\cap_{m\in \R} S^m_{\B}(\Z^n)$.\index{$S^{-\infty}_{\B}(\Z^n)$} The spaces $S_{\B}(\R^n)$\index{$S_{\B}(\R^n)$} and $S^{-\infty}_{\B}(\R^n)$\index{$S^{-\infty}_{\B}(\R^n)$} are defined similarly.
\end{defn}

In the following we shall mainly be concerned with the following symbol spaces: $S^m_{\A_\th}(\Z^n)$, $S^m_{\A_\th}(\R^n)$, $S^m_\C(\Z^n)$ and $S^m_\C(\R^n)$.

\begin{rk} 
\label{rk:symbol spaces}
For $\A=\A_0$ we have $C^\infty(\R^n,C^\infty(\T^n))\simeq C^\infty(\R^n\times \T^n)$, so that $S^m_\A(\R^n)$ is the usual symbol space on the commutative torus (see Remark \ref{rk:A=A0}). Similarly, $S^m_\C(\R^n)$ is the usual space of symbols that are independent of the variable $x$ on the commutative torus. 
\end{rk}

\begin{ex} 
If $j\in \set{1,\cdots,n}$, the map $k\mapsto k_j U_0$ is a symbol in $S^1_{\A_\th}(\Z^n)$. 
Moreover, any element of $\A_\th$ can be seen as a symbol in $S^0_{\A_\th}(\Z^n)$, through the injection $\A_\th\to S^0_{\A_\th}(\Z^n)$ given by $a\mapsto (k\mapsto a)$.
\end{ex}

The space $S^m_{\A_\th}(\Z^n)$ is a Fr\'echet space for the seminorms
\begin{equation}\label{Frechetseminorms}
 p_{\a,\b}^{(m)}(\sg):=\sup_{k\in \Z^n} {\langle k\rangle^{-m+|\b|}}p_\a(\Delta^\b \sg(k))\, . \index{$p_{\a,\b}^{(m)}$}
\end{equation}

We have the following relation between discrete and smooth symbols:
\begin{lem}
\label{restr} 
Let $\B$ be either $\A_\th$ or $\C$. The restriction map $r:\B^{\R^n}\to \B^{\Z^n}$, $\sg\mapsto \sg_{|\Z^n}$ maps $S_\B^m(\R^n)$ into $S_\B^m(\Z^n)$ for all $m\in \R$. In particular, it sends smoothing symbols to smoothing discrete symbols.
\end{lem}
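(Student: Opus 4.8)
The plan is to obtain the discrete symbol estimate \eqref{pdeltasigma} for $r(\sg)=\sg_{|\Z^n}$ directly from the smooth one for $\sg\in S_\B^m(\R^n)$, the bridge being an integral formula expressing the forward difference operator in terms of a derivative. Since the seminorms $p_i$ on $\B$ are continuous and $\B$ is complete, $\B$--valued continuous functions on compact sets may be integrated in the usual (Riemann/Bochner) sense and each $p_i$ commutes with such integrals in the sense of the triangle inequality; this is all the functional analysis that is needed.

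First I would record, for $\tau\in C^\infty(\R^n,\B)$ and $j\in\set{1,\cdots,n}$, the one--step identity
\[
\Delta_j\tau(k)=\tau(k+e_j)-\tau(k)=\int_0^1(\del_{\xi_j}\tau)(k+te_j)\,dt,
\]
which is the fundamental theorem of calculus applied to $t\mapsto\tau(k+te_j)$. Because each $\Delta_j$ and each $\del_{\xi_l}$ is translation invariant, they commute, so iterating the identity along the $|\b|$ factors of $\Delta^\b=\Delta_1^{\b_1}\cdots\Delta_n^{\b_n}$ yields, by induction on $|\b|$,
\[
\Delta^\b\sg(k)=\int_{[0,1]^{|\b|}}(\del_\xi^\b\sg)\Bigl(k+\textstyle\sum_{l=1}^{|\b|}t_l f_l\Bigr)\,dt_1\cdots dt_{|\b|},
\]
where $(f_l)_l$ is the list of basis vectors obtained by repeating $e_j$ exactly $\b_j$ times; in particular $\eta_t:=\sum_l t_l f_l$ satisfies $|\eta_t|\leq|\b|$ for every $t\in[0,1]^{|\b|}$. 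Applying $p_i$ and pulling it inside the (unit--volume) integral gives
\[
p_i\bigl(\Delta^\b\sg(k)\bigr)\leq\sup_{t\in[0,1]^{|\b|}}p_i\bigl((\del_\xi^\b\sg)(k+\eta_t)\bigr).
\]

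It then remains to feed in the smooth estimate $p_i\bigl((\del_\xi^\b\sg)(\xi)\bigr)\leq C_{i,\b}\langle\xi\rangle^{m-|\b|}$ and to compare $\langle k+\eta_t\rangle$ with $\langle k\rangle$. From Peetre's inequality $\langle u+v\rangle\leq\sqrt2\,\langle u\rangle\langle v\rangle$ and $\langle\eta_t\rangle\leq\sqrt{|\b|^2+1}$ one gets the two--sided bound $(\sqrt2\sqrt{|\b|^2+1})^{-1}\langle k\rangle\leq\langle k+\eta_t\rangle\leq\sqrt2\sqrt{|\b|^2+1}\,\langle k\rangle$, hence, whatever the sign of the exponent $m-|\b|$, $\langle k+\eta_t\rangle^{m-|\b|}\leq c_{m,\b}\,\langle k\rangle^{m-|\b|}$ for a constant $c_{m,\b}$ depending only on $m$ and $\b$. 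Combining the last three displays gives $p_i\bigl(\Delta^\b\sg(k)\bigr)\leq C_{i,\b}\,c_{m,\b}\,\langle k\rangle^{m-|\b|}$ for all $k\in\Z^n$, which is exactly \eqref{pdeltasigma}; thus $r(\sg)\in S_\B^m(\Z^n)$. The ``in particular'' assertion follows at once by intersecting over all $m\in\R$. The only point requiring a little care is the need for the \emph{two--sided} comparison of $\langle k+\eta_t\rangle$ and $\langle k\rangle$ (to cover negative exponents $m-|\b|$); everything else is routine bookkeeping with multi--indices, and the argument works verbatim for any Fr\'echet algebra $\B$, as in the commutative case \cite{RT1,RT4}.
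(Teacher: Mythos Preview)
Your argument is correct and is precisely the standard mean--value/integral representation of finite differences in terms of derivatives, combined with Peetre's inequality, that underlies the ``if'' part of \cite[Theorem~4.5.2]{RT4} to which the paper defers; so the approaches coincide. One minor remark: the paper's form of Peetre's inequality $\langle x+y\rangle^{t}\le\sqrt2^{|t|}\langle x\rangle^{t}\langle y\rangle^{|t|}$ already handles both signs of the exponent $m-|\b|$ directly, so the two--sided bound you set up, while correct, is not strictly needed.
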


\begin{proof} 
The proof is similar to the proof of the ``if" part of \cite[Theorem 4.5.2]{RT4}.
\end{proof}

There is also a relation between symbols with values in $\A_\th$ and complex valued symbols:

\begin{lem}
\label{lem:fouriercoeff}
Let $\sg\in S^m_{\A_\th}(\R^n)$. Then for any $l\in \Z^n$, the function $\sg_l$ defined by $\xi\mapsto \t( \sg(\xi)U_{-l})$ belongs to $S^m_{\C}(\R^n)$. Moreover, for any $\b \in \N^n$ and $N\in \N$, there is a constant $C_{\b,N}>0$ such that for all $\xi\in \R^n$
\[
|\del_\xi^\b\sg_l(\xi)| \leq C_{\b,N} \langle \xi\rangle^{m-|\b|} \langle l\rangle^{-N} \, .
\]
The same properties hold for discrete symbols, replacing $\del_\xi^\b$ by difference operators.
\end{lem}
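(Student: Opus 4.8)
The plan is to recognise $\sg_l(\xi)$ as the $U_l$--Fourier coefficient $(\sg(\xi))_l$ of the element $\sg(\xi)\in\A_\th$, in the notation fixed just before Section \ref{sectoroidal}, and then to convert the defining symbol estimates, which are phrased in the Fréchet seminorms $p_\a$, into the sequence--norm seminorms $q_N$ by means of Lemma \ref{lemsemi}. One first checks that $\t(aU_{-l})=a_l$ for $a=\sum_k a_kU_k\in\A_\th$: expanding $aU_{-l}=\sum_k a_k\,c(k,-l)\,U_{k-l}$ via \eqref{eq:UkUl} and reading off the $U_0$--component leaves only the term $k=l$, and $c(l,-l)=e^{\pi i\langle l,\th l\rangle}=1$ because $\th$ is antisymmetric. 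Hence $\sg_l(\xi)=\t(\sg(\xi)U_{-l})=(\sg(\xi))_l$, and in particular $a\mapsto\t(aU_{-l})$ is, for fixed $l$, a continuous linear functional on $\A_\th$ (since $|a_l|\le q_0(a)$).

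Since $\sg\in C^\infty(\R^n,\A_\th)$ and the functional $a\mapsto\t(aU_{-l})$ is continuous and linear, differentiation commutes with it: $\del_\xi^\b\sg_l(\xi)=(\del_\xi^\b\sg(\xi))_l$. By the very definition of $q_N$,
\[
 |\del_\xi^\b\sg_l(\xi)| \;\le\; \langle l\rangle^{-N}\,q_N\bigl(\del_\xi^\b\sg(\xi)\bigr)\qquad\text{for every } N\in\N .
\]
It then suffices to bound $q_N(\del_\xi^\b\sg(\xi))$ by $\langle\xi\rangle^{m-|\b|}$ up to a constant. The proof of Lemma \ref{lemsemi} gives $q_N(b)\le\norm{(1+\Db)^N(b)}$, and expanding $(1+\Db)^N=(1+\sum_j\delta_j^2)^N$ exhibits $(1+\Db)^N(b)$ as a finite linear combination of the $\delta^{2\ga}(b)$ with $|\ga|\le N$, whence $q_N(b)\le\sum_{|\ga|\le N}C_\ga\,p_{2\ga}(b)$. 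Applying this with $b=\del_\xi^\b\sg(\xi)$ and invoking the symbol estimates $p_{2\ga}(\del_\xi^\b\sg(\xi))\le C_{2\ga,\b}\langle\xi\rangle^{m-|\b|}$ coming from $\sg\in S^m_{\A_\th}(\R^n)$ gives $q_N(\del_\xi^\b\sg(\xi))\le C'_{N,\b}\langle\xi\rangle^{m-|\b|}$. Combining the two estimates yields $|\del_\xi^\b\sg_l(\xi)|\le C_{\b,N}\langle\xi\rangle^{m-|\b|}\langle l\rangle^{-N}$, which is the asserted decay; specialising (e.g.\ to $N=0$, or absorbing $\langle l\rangle^{-N}$ into the constant for fixed $l$) shows $\sg_l\in S^m_\C(\R^n)$.

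The discrete case is handled by the identical argument with $\del_\xi^\b$ replaced by $\Delta^\b$: each forward difference $\Delta_j$ is built from the shift $T_{e_j}$ and subtraction, hence commutes with the fixed continuous functional $a\mapsto\t(aU_{-l})$, so that $\Delta_k^\b\sg_l(k)=(\Delta_k^\b\sg(k))_l$, and one concludes from $p_{2\ga}(\Delta^\b\sg(k))\le C_{2\ga,\b}\langle k\rangle^{m-|\b|}$ exactly as above. No step is genuinely difficult; the two points that must be handled with a little care are the vanishing of the cocycle phase $c(l,-l)$ — which is what makes $\t(\cdot\,U_{-l})$ literally read off the coefficient $a_l$ — and the quantitative comparison $q_N\lesssim\sum_{|\ga|\le N}p_{2\ga}$ extracted from Lemma \ref{lemsemi}, since this is what turns an $\A_\th$--valued symbol estimate into decay that is uniform in $l$.
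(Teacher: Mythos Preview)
Your proof is correct and follows essentially the same strategy as the paper: both recognise $\sg_l(\xi)=\langle\sg(\xi),U_l\rangle$ and gain $\langle l\rangle^{-N}$ decay by shifting powers of $l$ onto derivations $\delta^\mu$ of $\sg(\xi)$, then invoke the symbol estimates. The only cosmetic difference is packaging: you route the argument through the seminorms $q_N$ and the bound $q_N\le\sum_{|\ga|\le N}C_\ga\,p_{2\ga}$ extracted from Lemma~\ref{lemsemi}, whereas the paper writes the same computation directly via the identity $\langle a,U_l\rangle=\langle l\rangle^{-2N}\sum_{|\mu|\le 2N}c_{\mu,N}\langle\delta^\mu a,U_l\rangle$ coming from self--adjointness of the $\delta_j$.
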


\begin{proof}
Let $N\in \N$, $\b\in \N^n$, and $\sg\in S^m_{\A_\th}(\R^n)$. From $\langle \delta_j(a),b\rangle=\langle a,\delta_j(b)\rangle$ for all $a,b\in \A_\th$, and $j\in \set{1,\cdots,n}$, we deduce that for all $a\in \A_\th$, and $l\in \Z^n$,
$\langle a,U_l\rangle= \langle l\rangle^{-2N}\sum_{|\mu|\leq 2N} c_{\mu,N} \langle \delta^\mu (a),U_l\rangle$ where the $c_{\mu,N}$ are positive coefficients such that $\langle l\rangle^{2N}=\sum_{|\mu|\leq 2N} c_{\mu,N} l^\mu$ for all 
$l\in \Z^n$. This yields the following estimate for all $\xi\in \R^n$:
\begin{align*}
|\del_\xi^\b \sg_l(\xi)| = |\langle \del_\xi^\b \sg(\xi),U_l\rangle|&\leq \langle l\rangle^{-2N}\sum_{|\mu|\leq 2N} c_{\mu,N} \norm{\delta^\mu( \del_\xi^\b \sg(\xi))}\\
 &\leq C_{\b,N}\langle  \xi\rangle^{m-|\b|} \langle l\rangle^{-2N}
\end{align*}
where $C_{\b,N}:=\sum_{|\mu|\leq 2N} c_{\mu,N} p_{\mu,\b}^{(m)}(\sg)$. The case of discrete symbols is similar.
\end{proof}

Let $\sg\in S_{\A_\th}(\Z^n)$ and $\sg_{[j]} \in S_{\A_\th}^{m_j}(\Z^n)$ for $j\in \N$ where $m_j\in \R$, $m_j> m_{j+1}$, and $\lim_{j\to\infty} m_j =-\infty$.
As in the commutative toroidal calculus, the notation $\sg \sim \sum_{j=0}^\infty \sg_{[j]}$ means that $\sg-\sum_{j=0}^N \sg_{[j]}\in S^{m_{N+1}}_{\A_\th}(\Z^n)$ for all $N\in \N$.

If $\sg, \tau\in S^m_{\A_\th}(\Z^n)$, the notation $\sg\sim \tau$ means that $\sg-\tau\in S^{-\infty}_{\A_\th}(\Z^n)$.

We extend the previous notations to the case of smooth symbols on the noncommutative torus, i.e.\ when $\sg$, $\tau$, $\sg_{[j]}$ belong
to $S_{\A_\th}(\R^n)$. 

As in the commutative toroidal calculus, it is possible to build symbols from asymptotics:
\begin{lem} 
\label{lemasym}
 Let $\B$ be either $\A_\th$ or $\C$. If $\sg_{[j]} \in S_{\B}^{m_j}(\Z^n)$ (resp.\ $S_{\B}^{m_j}(\R^n)$) for $j\in \N$ where $m_j\in \R$, $m_j> m_{j+1}$, and $\lim_{j\to\infty} m_j =-\infty$,
 then there exists $\sg\in S^{m_0}_{\B}(\Z^n)$ (resp.\ $S^{m_0}_{\B}(\R^n)$) such that $\sg\sim \sum_{j=0}^\infty \sg_{[j]}$.
\end{lem}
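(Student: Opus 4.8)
The plan is to adapt the classical Borel-type summation argument for asymptotic expansions of symbols to the toroidal setting, treating the discrete case $S_\B^m(\Z^n)$ and the smooth case $S_\B^m(\R^n)$ in parallel (the smooth case being essentially the statement in \cite{RT4}, so the real work is the discrete version). First I would introduce a smooth cut-off function $\chi\in C^\infty(\R^n,[0,1])$ with $\chi\equiv 0$ on $\{|\xi|\leq 1\}$ and $\chi\equiv 1$ on $\{|\xi|\geq 2\}$, and choose a sequence $t_j\nearrow\infty$ of scaling parameters; then I would set
\[
\sg(k):=\sum_{j=0}^\infty \chi(k/t_j)\,\sg_{[j]}(k)\qquad(k\in\Z^n),
\]
which is a genuinely finite sum for each fixed $k$ since $\chi(k/t_j)=0$ once $t_j>2|k|$, so $\sg$ is well-defined as a map $\Z^n\to\B$ with no convergence issue at this stage.

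Next I would verify the symbol estimates \eqref{pdeltasigma} for $\sg$ and for each tail $\sg-\sum_{j=0}^N\sg_{[j]}$. The key point is to choose $t_j$ growing fast enough that the scaled cut-offs kill the bad part: for a fixed seminorm index $(i,\b)$ and a fixed order threshold, one wants $t_j$ large enough that $p_i\big(\Delta^\b(\chi(\cdot/t_j)\sg_{[j]})(k)\big)\leq 2^{-j}\langle k\rangle^{m_{j-1}-|\b|}$ whenever $\chi(k/t_j)\neq 1$, i.e.\ when $|k|\leq 2t_j$, using that $\sg_{[j]}\in S_\B^{m_j}(\Z^n)$ with $m_j<m_{j-1}$. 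Here I must replace the Leibniz rule for $\del_\xi$ used in the smooth case by the discrete Leibniz formula \eqref{LeibnizDisc}: $\Delta^\b\big(\chi(\cdot/t_j)\,\sg_{[j]}\big)=\sum_{\ga\leq\b}\tbinom{\b}{\ga}\Delta^\ga\big(\chi(\cdot/t_j)\big)\,T_\ga\Delta^{\b-\ga}\sg_{[j]}$, together with the elementary fact that the finite differences $\Delta^\ga\big(\chi(\cdot/t_j)\big)(k)$ are, by the mean value theorem applied componentwise, bounded by $C_\ga\,t_j^{-|\ga|}$ and supported in $\{t_j\leq|k|\leq 2t_j+C\}$ — this is the discrete surrogate for the chain-rule bound $|\del^\ga(\chi(\cdot/t_j))|\lesssim t_j^{-|\ga|}$. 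On the annular support $\langle k\rangle\sim t_j$, one trades factors of $\langle k\rangle$ against $t_j$ to absorb the difference $m_j-m_{j-1}<0$ and extract the gain $2^{-j}$, which is the heart of the argument.

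Having arranged the $t_j$ by a diagonal choice (enumerate the countably many constraints coming from all $(i,\b)$ and all $N$, and pick each $t_j$ to satisfy the first $j$ of them), I would then sum: for fixed $N$,
\[
\sg-\sum_{j=0}^N\sg_{[j]}=\sum_{j=0}^N\big(\chi(\cdot/t_j)-1\big)\sg_{[j]}+\sum_{j=N+1}^\infty\chi(\cdot/t_j)\,\sg_{[j]}.
\]
The first, finite sum lies in $S_\B^{-\infty}(\Z^n)$ because each $\big(\chi(\cdot/t_j)-1\big)$ is finitely supported, hence its product with any symbol is a smoothing symbol; the second sum is controlled termwise by the $2^{-j}\langle k\rangle^{m_N-|\b|}$ bounds, so it lies in $S_\B^{m_{N+1}}(\Z^n)$ (using $m_{N+1}\geq m_j$ for $j\geq N+1$ is false—rather $m_{N+1}>m_j$, so each term is in $S_\B^{m_{N+1}}$ and the geometric series converges in the Fréchet seminorms $p_{\a,\b}^{(m_{N+1})}$). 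This gives $\sg\sim\sum_j\sg_{[j]}$ and in particular $\sg\in S_\B^{m_0}(\Z^n)$. The smooth case $S_\B^m(\R^n)$ is identical with $\del_\xi^\b$ in place of $\Delta^\b$ and the ordinary Leibniz rule, and indeed follows from \cite[Theorem 4.5.2 / Exercise 4.9.3]{RT4} after noting the seminorms on $\B$ enter only as a fixed auxiliary index $i$ throughout.

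I expect the main obstacle to be the discrete annular book-keeping: showing that $\Delta^\ga(\chi(\cdot/t_j))(k)$ enjoys the same $t_j^{-|\ga|}$ decay and annular localisation as its continuous analogue, and then threading this through the discrete Leibniz formula so that the summability gain $2^{-j}$ survives uniformly in $k$. This is routine in spirit but must be done with care because $\Delta_j$ is not a derivation, so cross terms $\Delta^\ga(\chi(\cdot/t_j))\,T_\ga\Delta^{\b-\ga}\sg_{[j]}$ appear and one needs $\langle k+\ga\rangle\sim\langle k\rangle\sim t_j$ on the relevant support to convert everything into powers of $t_j$; the translation $T_\ga$ is harmless since $|\ga|\leq|\b|$ is fixed. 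Once that lemma about cut-offs is in place, the diagonal choice of $t_j$ and the final Fréchet-space estimate are standard.
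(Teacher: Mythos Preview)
Your proposal is correct and follows exactly the standard Borel--summation argument that the paper has in mind; the paper's own proof is nothing more than a pointer to \cite[Theorem 4.1.1]{RT4} for the discrete case and to the classical $\R^n$ argument for the smooth case, and your write-up is precisely a fleshed-out version of those references, with the discrete Leibniz formula \eqref{LeibnizDisc} replacing the ordinary one. One cosmetic remark: in your parenthetical self-correction you claim ``$m_{N+1}\geq m_j$ for $j\geq N+1$ is false'', but in fact it is true (equality at $j=N+1$, strict for $j>N+1$), and it is exactly the inequality you need to embed $S_\B^{m_j}\subset S_\B^{m_{N+1}}$; simply delete that aside.
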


\begin{proof} 
For the case of smooth symbols, the proof is similar to the standard (commutative) case of symbols on $\R^n$, and for the case of discrete symbols, the proof is similar to \cite[Theorem 4.1.1]{RT4}.
\end{proof}

\begin{defn}
\label{defn:bar t}
We define $\bar \t$ as a continuous linear map $\bar \t: S^m_{\A_\th}(\Z^n)\to S^m_\C(\Z^n)$\index{$\bar \t$} given by 
\[
 \bar \t (\sg): k\mapsto \t(\sg(k)) = \sg_0(k) \, ,
\]
where $\t$ is the trace defined in \eqref{trace}.
This map is compatible with the natural injection $\iota_\th: S^m_\C(\Z^n)\to S^m_{\A_\th}(\Z^n)$\index{$\iota_\th$} defined by 
\[
\iota_\th (\tau) : k\mapsto \tau(k) U_0
\]
in the sense that $\bar \t \circ \iota_\th = \Id_{S^m_\C(\Z^n)}$.

We define a similar map on smooth symbols (i.e.\ from the space $S^m_{\A_\th}(\R^n)$ to the space $S^m_\C(\R^n)$), still denoted by $\ol \t$, and the natural injection from complex valued smooth symbols $S^m_\C(\R^n)$ to $\A_\th$--valued smooth symbols $S^m_{\A_\th}(\R^n)$ is still denoted by $\iota_\th$. 
\end{defn}

\subsection{Toroidal pseudodifferential operators}

In order to introduce the notion of toroidal pseudodifferential operators we need to define a quantisation map.
\begin{lem}
\label{lem1}
(i) Let $a\in \A_\th$ and $\sg\in S^m_{\A_\th}(\Z^n)$. Then
  \[
  \Op_\th(\sg)(a):=\sum_{k\in \Z^n} a_k \, \sg(k)\,  U_k \index{$\Op_\th$}
  \]
  is absolutely summable in $\A_\th$.

\medskip

(ii) If $\sg\in S^m_{\A_\th}(\Z^n)$, the linear operator $\Op_\th(\sg):a\mapsto \Op_\th(\sg)(a)$ is continuous from $\A_\th$ into itself.

\medskip

(iii) If $a\in \A_\th$, the linear operator $\Op_\th (\cdot) (a) :\sg \mapsto \Op_\th(\sg)(a)$ is continuous from $S^m_{\A_\th}(\Z^n)$ into $\A_\th$.
\end{lem}

\begin{proof}
  Let $\a\in \N^n$. The Leibniz formula 
  \[
  \delta^\alpha\left(a_k\, \sg(k)\, U_k\right)=\sum_{\gamma+\gamma^\prime=\alpha} \tbinom{\alpha}{\gamma}\delta^\gamma(\sigma(k))\, \delta^{\gamma^\prime} (a_k U_k)
  \] 
  combined with  \eqref{pdeltasigma} yields the existence of a constant $C_\a>0$ such that for all $k\in \Z^n$,
  \[
  p_\a(a_k\, \sg(k)\, U_k) \leq C_\a |a_k| \langle k\rangle^{|\a|+m} \sum_{\gamma\leq \a} p^{(m)}_{\gamma,0}(\sg)\, .
  \]
  As a consequence, we obtain for all $k\in \Z^n$:
  \begin{equation}
  \label{lem1est}
    p_\a(a_k\, \sg(k)\, U_k) \leq C_\a \langle k\rangle^{-n-1} q_{N}(a) \sum_{\ga\leq \a} p^{(m)}_{\ga,0}(\sg)\, 
  \end{equation}
  where $N\geq|\a|+m+n+1$. This yields $(i)$ and $(iii)$, and $(ii)$ follows from \eqref{lem1est} and Lemma \ref{lemsemi}.
\end{proof} 

\begin{defn} 
A \emph{toroidal pseudodifferential operator of order} $m$ is a continuous linear operator $\A_\th\to \A_\th$ of the form $\Op_\th(\sg)$ for a symbol $\sg\in S^m_{\A_\th}(\Z^n)$. We denote by $\Psi_\th^m(\T^n):=\Op_\th(S^m_{\A_\th}(\Z^n))$\index{$\Psi_\th^m(\T^n)$} the space of pseudodifferential operators of order $m$, and we further set $\Psi_\th(\T^n):=\cup_m \Psi^m_\th(\T^n)$\index{$\Psi_\th(\T^n)$}, $\Psi^{-\infty}_\th(\T^n):= \cap_m \Psi^m_\th(\T^n)$.\index{$\Psi^{-\infty}_\th(\T^n)$}
\end{defn}

\begin{rk}
The space $\Psi_0^m(\T^n):=\Op_0(S^m_{\A}(\Z^n))$ is the standard space $\Psi^m(\T^n)$ of pseudodifferential operators on the commutative torus \cite[Theorem 5.4.1]{RT4} (see Remark \ref{rk:symbol spaces}).
\end{rk}

One of the features of the toroidal calculus (as well as other global calculi) is the one to one correspondence between pseudodifferential operators and symbols.
This feature also holds in the noncommutative setting:

\begin{prop}
\label{propquantisation}
  (i) The quantisation map $\Op_\th: S^m_{\A_\th}(\Z^n) \to \Psi^m_\th(\T^n)$ is a bijection.

\medskip

  (ii) The inverse (dequantisation) map $\Op_\th^{-1}$ satisfies for all $A\in \Psi^m_\th(\T^n)$ and $k\in \Z^n$,
 \begin{equation}\label{definition:Op-1}
  \Op_\th^{-1} (A) (k)= A(U_k)\, U_{-k}\, .
 \end{equation}
\end{prop}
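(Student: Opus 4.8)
The plan is to prove the two assertions in parallel, building the candidate inverse map by hand from formula \eqref{definition:Op-1} and then checking it really is a two-sided inverse of $\Op_\th$.

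First I would establish the \emph{reconstruction formula}: for $\sg\in S^m_{\A_\th}(\Z^n)$ and any $k\in\Z^n$, one has $\Op_\th(\sg)(U_k)=\sg(k)\,U_k$. This is an immediate computation from the definition in Lemma \ref{lem1}(i): the only nonzero Fourier coefficient of $U_k$ is $(U_k)_k=1$, so $\Op_\th(\sg)(U_k)=\sum_{l}(U_k)_l\,\sg(l)\,U_l=\sg(k)\,U_k$. Multiplying on the right by $U_{-k}$ (a unitary, using \eqref{eq:UkUl} to see $U_kU_{-k}=c(k,-k)U_0=U_0=1$ since $c(k,-k)=e^{-\pi i\langle k,\theta(-k)\rangle}=e^{\pi i\langle k,\theta k\rangle}=1$ by antisymmetry of $\theta$) gives $\Op_\th(\sg)(U_k)\,U_{-k}=\sg(k)$. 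This shows at once that $\Op_\th$ is \emph{injective} on $S^m_{\A_\th}(\Z^n)$: if $\Op_\th(\sg)=\Op_\th(\tau)$ then $\sg(k)=\Op_\th(\sg)(U_k)U_{-k}=\Op_\th(\tau)(U_k)U_{-k}=\tau(k)$ for every $k$. Injectivity already forces that the map $A\mapsto\big(k\mapsto A(U_k)U_{-k}\big)$, restricted to the image $\Psi^m_\th(\T^n)$, is a left inverse; what remains for (i) is surjectivity, which is true by the very definition of $\Psi^m_\th(\T^n)=\Op_\th(S^m_{\A_\th}(\Z^n))$, so (i) follows formally. For (ii) I then need to check that the formula $\Op_\th^{-1}(A)(k)=A(U_k)U_{-k}$ is consistent, i.e.\ that when $A=\Op_\th(\sg)$ this recovers $\sg(k)$ — which is exactly the reconstruction formula above.

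The genuine content, and the step I expect to be the main obstacle, is not the algebraic identity but the \emph{regularity} bookkeeping implicit in calling $\Op_\th^{-1}$ well-defined on $\Psi^m_\th(\T^n)$: one should know that if $A=\Op_\th(\sg)$ for \emph{some} $\sg$, the function $k\mapsto A(U_k)U_{-k}$ is independent of the choice of $\sg$ (this is injectivity, done) and that it actually lies in $S^m_{\A_\th}(\Z^n)$ (automatic, since it equals $\sg$). There is, however, a subtlety worth addressing if one wants the statement to have teeth: a priori a ``toroidal pseudodifferential operator of order $m$'' is defined as a continuous operator $\A_\th\to\A_\th$ of the form $\Op_\th(\sg)$, so membership in $\Psi^m_\th(\T^n)$ already presupposes the existence of such a symbol; the proposition is then essentially the assertion that this symbol is unique and recovered by \eqref{definition:Op-1}. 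I would make this explicit: uniqueness is injectivity of $\Op_\th$, which is the reconstruction formula; the recovery formula is the same computation read backwards.

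Finally I would assemble the argument in the order: (1) verify $U_kU_{-k}=1$ from \eqref{eq:UkUl} and antisymmetry of $\theta$; (2) prove $\Op_\th(\sg)(U_k)=\sg(k)U_k$ directly from the definition in Lemma \ref{lem1}(i), noting absolute summability is guaranteed by that lemma so the rearrangement is legitimate; (3) deduce $\Op_\th(\sg)(U_k)U_{-k}=\sg(k)$, hence injectivity of $\Op_\th$, hence bijectivity onto its image $\Psi^m_\th(\T^n)$, giving (i); (4) read step (3) as the identity $\Op_\th^{-1}(\Op_\th(\sg))(k)=\Op_\th(\sg)(U_k)U_{-k}$, i.e.\ \eqref{definition:Op-1}, giving (ii). The only place one must be a little careful is step (2), where one uses that $U_k$ has the single Fourier coefficient $(U_k)_l=\delta_{kl}$ — which is Convention/definition of the Weyl elements together with \eqref{trace} — and that Lemma \ref{lem1}(i) licenses treating $\Op_\th(\sg)(U_k)$ as the absolutely convergent series $\sum_l (U_k)_l\,\sg(l)\,U_l$.
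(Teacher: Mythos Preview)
Your proposal is correct and follows essentially the same approach as the paper: surjectivity is immediate from the definition $\Psi^m_\th(\T^n)=\Op_\th(S^m_{\A_\th}(\Z^n))$, and injectivity together with the inverse formula both come from the single computation $\Op_\th(\sg)(U_k)=\sg(k)U_k$. You supply more detail than the paper (the explicit check that $U_kU_{-k}=1$ via antisymmetry of $\theta$, and the remark that the regularity of $k\mapsto A(U_k)U_{-k}$ is automatic since it equals the given $\sg$), but the argument is the same.
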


\begin{proof}
$(i)$ The linear map $\Op_\th$ is surjective by definition. If $\Op_\th(\sg)=0$, then in particular $\Op_\th(\sg)(U_k)=\sg(k)U_k=0$ for all $k\in \Z^n$. This implies that $\sg(k)=0$ for all $k\in \Z^n$, and so $\sg=0$.

$(ii)$ One easily checks that $\Op_\th \Op_\th^{-1} = \Id_{\Psi^m_\th(\T^n)}$ and $\Op_\th^{-1}\Op_\th = \Id_{S^m_{\A_\th}(\Z^n)}$, where $\Op_\th^{-1}$ is the linear map defined in $\eqref{definition:Op-1}$.
\end{proof}
 
The quantisation map $\Op_\th$ therefore extends to a bijective linear map  $\Op_\th: S_{\A_\th}(\Z^n)\to\Psi_\th(\T^n)$ compatible with the filtration and induces a bijective linear map $\Op_\th: S_{\A_\th}^{-\infty}(\Z^n)\to \Psi^{-\infty}_\th(\T^n)$. 
                               
As the following result shows, pseudodifferential operators can be composed and their composition is also a pseudodifferential operator. Transporting the composition product on $\Psi_\th(\T^n)$ over to the symbol space $S_{\A_\th}(\Z^n)$ yields a star--product on $S_{\A_\th}(\Z^n)$, just as the Weyl--Moyal product can be derived from the global Weyl quantisation map on $\R^n$.

\begin{thm}
\label{thmcomp}
  Let $A\in \Psi^{m}_\th(\T^n)$, and $B\in \Psi^{m'}_\th(\T^n)$. Then $AB\in \Psi^{m+m'}_\th(\T^n)$. More precisely, if $\sigma\in S_{\A_\th}^m(\Z^n)$ and $\tau\in S_{\A_\th}^{m^\prime}(\Z^n)$ then  
  \[
  \Op_\th(\sg)\, \Op_\th(\tau)= \Op_\th(\sg\circ_\theta\tau)
  \]
  where we have set
\begin{equation}\label{starproduct}
(\sg\circ_\theta \tau)(k) :=  \sum_{l\in \Z^n} \tau_{l}(k)\,  \sg(l+k)\,  U_l \, . \index{$\circ_\theta$}
\end{equation} 
The bilinear map $\circ_\th : S_{\A_\th}^m(\Z^n)\times S_{\A_\th}^{m'}(\Z^n) \to S_{\A_\th}^{m+m'}(\Z^n)$ is called the \emph{star--product} of $\sigma$ and $\tau$.\\
Consequently, $\Psi_\th(\T^n)$ is an $\R$--graded algebra under composition of operators. 
Moreover, $\Psi_\th^{-\infty}(\T^n)$ is an ideal of $\Psi_\th(\T^n)$. We call $\Psi_\th^{-\infty}(\T^n)$ the ideal of \emph{smoothing} operators.
\end{thm}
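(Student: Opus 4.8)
\textit{Proof strategy.} The plan is threefold: (1) show that the series \eqref{starproduct} defines an element $\sg\circ_\th\tau\in S^{m+m'}_{\A_\th}(\Z^n)$ and that $\circ_\th$ is bilinear and continuous; (2) identify $\Op_\th(\sg)\Op_\th(\tau)$ with $\Op_\th(\sg\circ_\th\tau)$ by evaluating both (continuous) operators on the Weyl elements; (3) read off the algebra structure from the bijection of Proposition \ref{propquantisation}. For step (1), write $\tau_l(k):=(\tau(k))_l$, so that by Lemma \ref{lem:fouriercoeff} (discrete case) the scalar sequences $k\mapsto\Delta^\ga_k\tau_l(k)$ satisfy $|\Delta^\ga_k\tau_l(k)|\leq C_{\ga,N}\,\langle k\rangle^{m'-|\ga|}\langle l\rangle^{-N}$ for every $N\in\N$. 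Since the Weyl elements are unitary and $\delta^\mu U_l=l^\mu U_l$, the Leibniz rule for $\delta$ gives $p_\a(x\,U_l)\leq C_\a\,\langle l\rangle^{|\a|}\sum_{\a'\leq\a}p_{\a'}(x)$ for all $x\in\A_\th$. Applying the discrete Leibniz formula \eqref{LeibnizDisc} to $\Delta^\b_k(\tau_l(k)\,\sg(l+k))$ — the $k$-difference of $\sg(l+\cdot)$ producing $\Delta^{\b-\ga}\sg$ evaluated at a bounded shift of $l+k$ — and combining these estimates with the symbol bound $p_{\a'}((\Delta^{\b-\ga}\sg)(\,\cdot\,))\leq C\langle\,\cdot\,\rangle^{m-|\b-\ga|}$ and Peetre's inequality $\langle l+k+\ga\rangle^{m-|\b-\ga|}\leq C\,\langle k\rangle^{m-|\b-\ga|}\langle l\rangle^{|m|+|\b|}$, one bounds the $l$-th summand of \eqref{starproduct} (after applying $p_\a\circ\Delta^\b_k$) by $C\,\langle k\rangle^{m+m'-|\b|}\langle l\rangle^{|\a|+|m|+|\b|-N}$. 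Choosing $N>|\a|+|m|+|\b|+n$ makes the $l$-sum converge — in particular the series \eqref{starproduct} converges absolutely in $\A_\th$ for each fixed $k$ — and yields $p_\a(\Delta^\b(\sg\circ_\th\tau)(k))\leq C_{\a,\b}\,\langle k\rangle^{m+m'-|\b|}$, i.e.\ $\sg\circ_\th\tau\in S^{m+m'}_{\A_\th}(\Z^n)$; the same estimate gives continuity of $\circ_\th$, and bilinearity is immediate.

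For step (2), by Lemma \ref{lem1}(ii) both $\Op_\th(\sg)\Op_\th(\tau)$ and $\Op_\th(\sg\circ_\th\tau)$ are continuous operators on $\A_\th$, so it suffices to compare them on the Weyl elements, whose finite linear combinations are dense in $\A_\th$. From $\Op_\th(\tau)(U_k)=\tau(k)\,U_k=\sum_l\tau_l(k)\,U_l\,U_k$ and \eqref{eq:UkUl} we obtain $\Op_\th(\tau)(U_k)=\sum_l\tau_l(k)\,c(l,k)\,U_{l+k}$, a series converging in $\A_\th$ (Lemma \ref{lem1}(i)); applying the continuous map $\Op_\th(\sg)$ term by term gives $\Op_\th(\sg)\Op_\th(\tau)(U_k)=\sum_l\tau_l(k)\,c(l,k)\,\sg(l+k)\,U_{l+k}$. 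Multiplying on the right by $U_{-k}$ and using $c(l,k)\,c(l+k,-k)=1$ — which follows from $U_l U_k U_{-k}=U_l$ together with \eqref{eq:UkUl} — the right-hand side becomes $\sum_l\tau_l(k)\,\sg(l+k)\,U_l=(\sg\circ_\th\tau)(k)$. Since $U_{-k}$ is invertible, this shows $\Op_\th(\sg)\Op_\th(\tau)(U_k)=(\sg\circ_\th\tau)(k)\,U_k=\Op_\th(\sg\circ_\th\tau)(U_k)$ for every $k\in\Z^n$, hence $\Op_\th(\sg)\Op_\th(\tau)=\Op_\th(\sg\circ_\th\tau)$ by continuity and density. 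As $\sg\circ_\th\tau\in S^{m+m'}_{\A_\th}(\Z^n)$ by step (1), this gives $AB\in\Psi^{m+m'}_\th(\T^n)$ with the stated symbol.

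For step (3), Proposition \ref{propquantisation} makes $\Op_\th$ a filtration-preserving linear bijection $S_{\A_\th}(\Z^n)\to\Psi_\th(\T^n)$; by step (2) it intertwines $\circ_\th$ with composition of operators, so $\Psi_\th^m(\T^n)\,\Psi_\th^{m'}(\T^n)\subseteq\Psi_\th^{m+m'}(\T^n)$ and $\Psi_\th(\T^n)$ is an $\R$-graded algebra, associativity being inherited from composition of operators. Finally, if $A\in\Psi^{-\infty}_\th(\T^n)=\cap_m\Psi^m_\th(\T^n)$ and $B\in\Psi^{m'}_\th(\T^n)$, then $A\in\Psi^m_\th(\T^n)$ for every $m$, so $AB$ and $BA$ belong to $\Psi^{m+m'}_\th(\T^n)$ for every $m$, hence to $\cap_m\Psi^m_\th(\T^n)=\Psi^{-\infty}_\th(\T^n)$, making $\Psi^{-\infty}_\th(\T^n)$ a two-sided ideal. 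The only genuinely technical point is the symbol estimate in step (1): one must organise the $\ga$-sum coming from the discrete Leibniz formula, extract the correct power $\langle k\rangle^{m-|\b-\ga|}\langle k\rangle^{m'-|\ga|}=\langle k\rangle^{m+m'-|\b|}$ via Peetre's inequality, and absorb the residual polynomial growth in $l$ into the rapid decay $\langle l\rangle^{-N}$ furnished by the discrete analogue of Lemma \ref{lem:fouriercoeff}; everything else is a formal consequence of Proposition \ref{propquantisation} and Lemma \ref{lem1}.
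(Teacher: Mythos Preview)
Your proposal is correct and follows essentially the same route as the paper: compute $AB(U_k)U_{-k}$ to obtain the formula \eqref{starproduct}, then prove the series converges in $S^{m+m'}_{\A_\th}(\Z^n)$ via the discrete Leibniz formula \eqref{LeibnizDisc}, Peetre's inequality, and the rapid decay of the Fourier coefficients $\tau_l$. The only cosmetic differences are that you invoke Lemma \ref{lem:fouriercoeff} for the $\langle l\rangle^{-N}$ decay whereas the paper performs the underlying integration-by-parts $l^{\a-\a'}\langle\Delta^{\b'}\tau(k),U_l\rangle=\langle\delta^{\a-\a'}\Delta^{\b'}\tau(k),U_l\rangle$ inline, and you make the density/continuity argument and the ideal property explicit where the paper leaves them tacit.
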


\begin{proof} 
We want to show that $\rho: k\mapsto AB(U_k)U_{-k}$ lies in $ S^{m+{m'}}_{\A_\th}(\Z^n)$. A straightforward computation shows that for any $k\in \Z^n$,
\[
\rho(k)= \sum_{l\in \Z^n} \tau_l(k)\, \sg(l+k)\, U_l=(\sg\circ_\theta \tau)(k) .
\]
Thus, it is enough to check that $\sum_l \rho^{(l)}$, where $\rho^{(l)}:k\mapsto \tau_l(k)\, \sg(l+k)\, U_l$, is absolutely summable in the Fr\'echet space $S^{m+{m'}}_{\A_\th}(\Z^n)$. Let $\a,\b\in \N^n$.
A computation based on the discrete Leibniz formula \eqref{LeibnizDisc} shows that for all $l,k\in \Z^n$,
\[
\delta^\a \Delta^\b_k \rho^{(l)} (k) =\sum_{\a'\leq \a}\sum_{\b'\leq \b} \tbinom{\a}{\a'}\tbinom{\b}{\b'}l^{\a-\a'}\langle \Delta_k^{\b'} \tau(k),U_l\rangle
\, \delta^{\a'}\Delta_k^{\b-\b'}\sg(l+k+\b')\, U_l\, .
\]
Let $N\in \N$, and write $\langle l\rangle^{2N}=\sum_{|\mu|\leq 2N} c_{\mu,N} l^\mu$ where $c_{\mu,N}$ are non--negative coefficients. Using the fact that
$\langle \delta_j(a),b\rangle=\langle a,\delta_j(b)\rangle$ for all $1\leq j \leq n$, we obtain
\[
l^{\a-\a'}\langle \Delta_k^{\b'} \tau(k),U_l\rangle = \langle l\rangle^{-2N}\sum_{|\mu|\leq 2N}c_{\mu,N} \langle\delta^{\mu+\a-\a'}\Delta_k^{\b'}
 \tau(k),U_l\rangle\, .
\]
This yields the following estimate:
\begin{align*}
\norm{\delta^\a \Delta^\b_k \rho^{(l)}(k)} \leq &\langle l\rangle^{-2N} \sum_{(\a',\b',\mu)\in F_{\a,\b,N}} \tbinom{\a}{\a'}\tbinom{\b}{\b'} c_{\mu,N} \, p_{\mu+\a-\a',\b'}^{({m'})}(\tau)\\
&p_{\a',\b-\b'}^{(m)}(\sg ) \, \langle k\rangle^{{m'}-|\b'|} \, \langle k+l+\b'\rangle^{m-|\b-\b'|}\, .
\end{align*}
where $F_{\a,\b,N}$ is the finite set $\set{(\a',\b',\mu)\in \N^{3n} \ : \ \a'\leq \a, \, \b'\leq \b\, , |\mu|\leq 2N}$.
Peetre's inequality: $\langle x+y\rangle^t\leq \sqrt 2^{\vert t\vert}\, \langle x\rangle^t\, \langle  y\rangle^{\vert t\vert}$, which holds for any real number $t$ and any $x,y$ in $\R^n$, yields
\[ 
\langle k+l+\b'\rangle^{m-|\b-\b'|}\leq (\sqrt{2}\langle \b'\rangle)^{\vert m-|\b-\b'|\vert}  \,\langle l\rangle^{\vert m-|\b-\b'|\vert}\, \langle k\rangle^{ m-|\b-\b'|} 
\] 
and hence 
\begin{equation}
\label{prodest}
p_{\a,\b}^{(m+{m'})}(\rho^{(l)})\leq \langle l\rangle^{-2N+|m|+|\b|} C_{\a,\b,N} \sum_{(\a',\b',\mu)\in F_{\a,\b,N}} p_{\mu+\a-\a',\b'}^{({m'})}(\tau)
p_{\a',\b-\b'}^{(m)}(\sg)\, .
\end{equation}
where $C_{\a,\b,N}:=\max_{(\a',\b',\mu)\in F_{\a,\b,N}} \tbinom{\a}{\a'}\tbinom{\b}{\b'} c_{\mu,N} (\sqrt{2}\langle \b'\rangle)^{|m-|\b-\b'||}$.
Choosing $N$ such that $-2N+|m|+|\b|<-n$ leads to the desired summability.
\end{proof}
\begin{ex}\label{ex:secprem} 
Let $\mathbf{\Delta}$ denote the operator $\sum_j \delta_j^2$ on $\A_\th$ as in Section \ref{secprem}. It follows from the above theorem, that the second order invertible pseudodifferential operator  $P:=1+\mathbf{\Delta}$ has powers given by $P^s(U_k)= \langle k\rangle^{2s} U_k$ for $s\in \R$.
\end{ex}
Note that the space $S_{\A_\th}(\Z^n)$, endowed with the star--product $\circ_\th$, is an $\R$--graded algebra, and $S^{-\infty}_{\A_\th}(\Z^n)$ is an ideal of this algebra.
Moreover, by \eqref{prodest}, the star--product is continuous as a bilinear map from $S_{\A_\th}^m(\Z^n)\times S_{\A_\th}^{m'}(\Z^n)$ into $S_{\A_\th}^{m+m'}(\Z^n)$. As a result, the composition of operators is continuous from $\Psi_\th^m(\T^n)\times \Psi_\th^{m'}(\T^n)$ into $\Psi_\th^{m+m'}(\T^n)$ with respect to the topology on $\Psi^{m+m'}_\th(\T^n)$ induced by that of $S^{m+m'}_{\A_\th}(\Z^n)$ via the isomorphism $\Op_\th$.

With the notation of Theorem \ref{thmcomp} we have for all $\sg,\tau\in S_{\A_\th}(\Z^n)$,
\begin{equation}
\label{bracketOp}
\left[\Op_\theta (\sigma),\Op_\theta(\tau)\right] = \Op_\theta\left(\{\sigma, \tau\}_\theta\right)
\end{equation}
where we have set $[A,B]:=AB-BA$, and $\{\sigma, \tau\}_\theta:= \sigma\circ_\theta \tau-\tau\circ_\theta\sigma$\index{$\{\sigma, \tau\}_\theta$} is called the \emph{star--bracket} (or simply commutator) of $\sigma$ and $\tau$.

Consider the derivation $\delta_j$ defined in \eqref{smalldeltaj}. We denote by $\bar \delta_j : S^m_{\A_\th}(\Z^n) \to S^m_{\A_\th}(\Z^n)$ the map defined as 
\begin{equation}
\label{bardelta}
\bar \delta_j (\sg)(k):=\delta_j(\sg(k)) \qquad \text{for all } k\in \Z^n\, . \index{$\bar \delta_j$}
\end{equation}
If $\a\in \N^n$ we denote by $\bar \delta^\a : S^m_{\A_\th}(\Z^n) \to S^m_{\A_\th}(\Z^n)$ the map defined as $\bar \delta^\a(\sg)(k):=\delta^\a(\sg(k))$ for all $k\in \Z^n$. The maps $\bar \delta^\a : S^m_{\A_\th}(\R^n) \to S^m_{\A_\th}(\R^n)$ are defined similarly.

\begin{ex} 
Let $\sg\in S^m_{\A_\th}(\Z^n)$. For all $j\in \{1, \cdots, n\}$,
\begin{align}
&\label{bracketkj}\{\sigma, k_j U_0\}_\theta= \bar\delta_j \sigma \, , \\
&\label{bracketUj}\{\sigma, U_{e_j}\}_\theta=  \Delta_j(\sigma)\,U_{e_j}+\sum_{l\in \Z^n} \sigma_l \, \left[U_l, U_{e_j}\right].
\end{align}
\end{ex} 

\begin{rk}
\label{rk:bar t a trace}
Note that the map $\ol\t$ given in Definition \ref{defn:bar t} vanishes on commutators of the pointwise algebra $(S_{\A_\th}(\Z^n),\cdot)$, but it does not vanish on commutators of the star--product algebra $(S_{\A_\th}(\Z^n),\circ_\th)$. 
\end{rk}

The Sobolev space $\H^s$ ($s\in \R$) associated to the noncommutative torus is defined as the Hilbert completion of $\A_\th$ for the following scalar product:
\[
\langle a,b\rangle_{s} := \sum_{k\in \Z^n} \langle k\rangle^{2s} a_k b_k\, ,
\]
where $a=\sum_{k\in \Z^n} a_k U_k$ and $b=\sum_{k\in \Z^n} b_k U_k$.

If $s=0$, the space $\H^0$ is the space $\H$ introduced in Section \ref{secprem}.

\begin{thm}
\label{thm:Sobolev}
(i) Any pseudodifferential operator of order $m$ is continuous from $\H^s$ into $\H^{s-m}$, for all $s\in \R$.

\medskip

(ii) Any pseudodifferential operator $A$ of order $m<-n$ is trace--class on $\H$. Moreover, 
$\Tr (A)= \sum_{\Z^n} \ol\t(\sg_A)$ where $\sg_A:=\Op_\th^{-1}(A)$ is the symbol of $A$. 
\end{thm}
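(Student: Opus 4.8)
The plan is to treat the two parts separately, in each case reducing to the corresponding classical statement on the commutative torus (for which we may invoke \cite{RT4}) by exploiting the explicit structure of the noncommutative Sobolev scalar product.

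For part (i), I would argue directly on Fourier coefficients. Given $\sg\in S^m_{\A_\th}(\Z^n)$ and $a=\sum_k a_k U_k\in\A_\th$, one has $\Op_\th(\sg)(a)=\sum_k a_k\,\sg(k)\,U_k$, and expanding $\sg(k)=\sum_l\sg_l(k)U_l$ together with \eqref{eq:UkUl} gives the $(l+k)$--th Weyl coefficient of $\Op_\th(\sg)(a)$ as a sum over shifts. The key estimate is that by Lemma \ref{lem:fouriercoeff} the complex symbols $\sg_l$ satisfy $|\sg_l(k)|\le C_N\langle k\rangle^m\langle l\rangle^{-N}$ for every $N$; the factor $\langle l\rangle^{-N}$ absorbs the shift via Peetre's inequality $\langle k+l\rangle^{t}\le\sqrt2^{|t|}\langle k\rangle^{t}\langle l\rangle^{|t|}$. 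One then estimates $\norm{\Op_\th(\sg)(a)}_{s-m}^2=\sum_j\langle j\rangle^{2(s-m)}|(\Op_\th(\sg)(a))_j|^2$ by a Schur--test / Young's inequality argument in the variable $l$, bounding it by a constant times $\sum_k\langle k\rangle^{2s}|a_k|^2=\norm{a}_s^2$. Since $\A_\th$ is dense in each $\H^s$, continuity of $\Op_\th(\sg):\H^s\to\H^{s-m}$ follows by extension.

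For part (ii), assume $m<-n$. By (i), $A=\Op_\th(\sg_A)$ maps $\H^0$ continuously to $\H^{-m}$ with $-m>n$, and the inclusion $\H^{-m}\hookrightarrow\H^0$ is Hilbert--Schmidt (indeed trace--class once $-m>n$) because $\sum_{k}\langle k\rangle^{-2(-m)}<\infty$; composing, $A$ is trace--class on $\H$. To compute the trace, I would use the orthonormal basis $(U_k)_{k\in\Z^n}$ of $\H=\H^0$: from $A(U_k)=\Op_\th(\sg_A)(U_k)=\sg_A(k)\,U_k$ one gets $\langle A(U_k),U_k\rangle=\t\big(\sg_A(k)\,U_k\,U_k^*\big)=\t(\sg_A(k))=\ol\t(\sg_A)(k)$, using $U_kU_k^*=U_0=1$ and \eqref{trace}. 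Hence $\Tr A=\sum_{k\in\Z^n}\langle A(U_k),U_k\rangle=\sum_{\Z^n}\ol\t(\sg_A)$, and the sum converges absolutely since $|\ol\t(\sg_A)(k)|=|(\sg_A)_0(k)|\le C\langle k\rangle^m$ with $m<-n$; one should also note the series is basis--independent as $A$ is trace--class.

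The main obstacle is the Schur--type summation estimate in part (i): one must be careful that the double sum over the "output" lattice variable and the shift variable $l$ genuinely factorises, i.e.\ that the rapid decay $\langle l\rangle^{-N}$ from Lemma \ref{lem:fouriercoeff} (which is uniform in $k$ and survives differences/derivatives) dominates the polynomial growth $\langle l\rangle^{|m-\cdots|}$ produced by Peetre's inequality, uniformly for $s$ in the relevant range. Once $N$ is chosen large enough this is routine, and it is essentially the noncommutative analogue of the proof of the boundedness theorem in \cite[Theorem 4.8.1 / §4]{RT4}; alternatively, one can cite that commutative result applied to the complex symbols $\sg_l$ and then reassemble, which is the route suggested by Remark \ref{rk:symbol spaces}.
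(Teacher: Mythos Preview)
Your proposal is correct and, for part $(i)$, takes essentially the same approach as the paper: the paper simply defers to \cite[Proposition 4.2.3]{RT4}, and what you sketch---expanding in Weyl coefficients, invoking Lemma \ref{lem:fouriercoeff} for rapid decay in $l$, and closing with Peetre's inequality and a Schur/Young estimate---is precisely the noncommutative transcription of that argument.

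For part $(ii)$ your route differs slightly from the paper's and is in fact cleaner. The paper argues directly that $\sum_{k}\langle A(U_k),U_k\rangle$ is absolutely summable via $|\t(\sg_A(k))|\le C\langle k\rangle^m$ and declares this ``enough'', without separately justifying that $A$ is trace--class (absolute summability of diagonal entries in a single orthonormal basis does not, on its own, imply the trace--class property). You instead factor $A$ as the bounded map $\H^0\to\H^{-m}$ from part $(i)$ followed by the embedding $\H^{-m}\hookrightarrow\H^0$, whose singular values $\langle k\rangle^{m}$ are summable exactly when $m<-n$; this gives trace--class a priori and makes the subsequent basis computation of $\Tr A$ legitimate and basis--independent. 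One small cosmetic point: the condition you wrote, $\sum_k\langle k\rangle^{2m}<\infty$, is the Hilbert--Schmidt threshold ($m<-n/2$); the trace--class condition is $\sum_k\langle k\rangle^{m}<\infty$, i.e.\ $m<-n$, which is exactly the hypothesis, so the conclusion stands.
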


\begin{proof}
$(i)$ The proof is similar to \cite[Proposition 4.2.3]{RT4}.

$(ii)$ Let $\mathbf{\Delta}$ denote the operator $\sum_j \delta_j^2$ on $\A_\th$ as in Section \ref{secprem}. With the notation of Example \ref{ex:secprem} setting  $P:=1+\mathbf{\Delta}$ we have  $P^s(U_k)= \langle k\rangle^{2s} U_k$ for any $s\in \R$. In particular, $P^{m/2}$ is trace--class on $\H$ for $m<-n$. Let $A$ be a pseudodifferential operator of order $m<-n$ and let us write $A= P^{m/2} P^{-m/2} A=:P^{m/2} B$. By $(i)$, $B$ is a bounded operator on $\H$, from which it follows that, like $P^{m/2}$, $A$ is trace--class on $\H$. Then  ${\Tr}(A)= \sum_{k\in \Z^n}\langle A(U_k), U_k\rangle= \sum_{\Z^n}\ol\t(\sg_A(k))$.
\end{proof}

\section{Classical toroidal symbols via extension maps}
\label{secextensionmap}

As in the commutative toroidal calculus we proceed to singling out a subclass of symbols and associated operators, namely the classical or (one--step) polyhomogeneous ones. In this section we use $\B$ to denote either $\A_\th$ or $\C$.

\subsection{Extended toroidal symbols}

We shall now use the extension of a toroidal symbol \cite[Section 6]{RT1}, \cite[Section 4.5]{RT4}, which is a key tool to transpose well--known concepts for symbols on $\R^n$ to toroidal symbols.

As before, for any fixed $k\in\Z^n$, let $T_k$ denote the translation on symbols $\sigma\mapsto \sigma(\cdot+k)$. For $\B=\C$, by \cite[Prop.\ 2.52]{P2}, given a symbol $\sigma$ of order $m$, the translated symbol $T_k\sigma$ is a symbol with the same order as $\sigma$ (see below Remark \ref{rk:transl}).

\begin{defn}
\label{def:extension}
Let $\sg\in S_\B(\Z^n)$. An \emph{extension} of $\sg$ is a symbol $\wt \sg$ in $S_\B(\R^n)$ such that $\wt \sg_{|\Z^n}=\sg$. 

We define an \emph{extension map} as a linear map $e:S_\B(\Z^n) \to S_\B(\R^n)$  

- which sends $S^m_\B(\Z^n)$ continuously into $S^m_\B(\R^n)$ for all $m\in \R$,

- such that $e(\sg)$ is an extension of $\sg$ for all $\sg$ in $S_{\B}(\Z^n)$, 

- which commutes with translations: $e\circ T_k= T_k \circ e$ for all $k\in \Z^n$.
 \end{defn}

\begin{defn}
\label{extensionmaps}
An extension map $e$ from $S_\C(\Z^n)$ into $S_\C(\R^n)$ is \emph{normalised} if for all $\sg \in S_\C^m(\Z^n)$ with $m<-n$,
\begin{equation}
\label{eq:normalised}\int_{\R^n} e( \sg) = \sum_{k\in \Z^n} \sg(k)\, .
\end{equation}
An extension map $e$ from $S_{\A_\th}(\Z^n)$ into $S_{\A_\th}(\R^n)$ is called $\A_\th$--\emph{compatible} if we have $e(a \sg b) = a e(\sg) b$ for all $a,b\in \A_\th$, where we identify $\A_\th$ with its image through the canonical injection $a\mapsto (k\mapsto a)$ from $\A_\th$ into $S^0_{\A_\th}(\R^n)$, or $S^0_{\A_\th}(\Z^n)$. 

An extension map $e$ from $S_{\A_\th}(\Z^n)$ into $S_{\A_\th}(\R^n)$ is called $\ol \t$--\emph{compatible} if $e\circ \iota_\th \circ \ol \t = \iota_\th \circ \ol \t \circ e$ (see Definition \ref{defn:bar t} for the definition of $\ol \t$).
\end{defn}

\begin{lem}
\label{lem:permute extensions and traces}
 If $e$ is a $\ol \t$--compatible extension map $S_{\A_\th}(\Z^n)\to S_{\A_\th}(\R^n)$, then $e_\C:=\ol \t\circ e\circ \iota_\th$\index{$e_\C$} is an extension map $S_{\C}(\Z^n)\to S_{\C}(\R^n)$ and we have $e_\C \circ \ol \t = \ol \t \circ e$, as well as $i_\th \circ e_\C= e\circ i_\th$. 
\end{lem}

\begin{proof}
 This follows straightforwardly from the definition of $\ol \t$--compatible extension map.
\end{proof}

\begin{defn}
 A $\ol \t$--compatible extension map $e$ from $S_{\A_\th}(\Z^n)$ into $S_{\A_\th}(\R^n)$ is \emph{normalised} if $e_\C$ is normalised.
\end{defn}

\begin{lem}
\label{rk:extension delta}
If $e:S_{\A_\th}(\Z^n)\to S_{\A_\th}(\R^n)$ is an $(\A_\th,\ol \t)$--compatible extension map, then for all $j=1,\ldots,n$
\[
 e \circ \ol \delta_j = \ol \delta_j \circ e,
\]
where $\ol\delta_j$ are the maps defined in \eqref{bardelta}, and hence, for all $\alpha\in\N^n$, $e \circ \ol \delta^\alpha = \ol \delta^\alpha \circ e$.
\end{lem}

\begin{proof}
 Let $\sg\in S_{\A_\th}^m(\Z^n)$ where $m\in \R$. The symbol $\sg$ can be written uniquely as $\sg=\sum_{l\in\Z^n}\sg_lU_l$ where $\sg_l\in S_\C^m(\Z^n)$ for all $l\in \Z$ (see Lemma \ref{lem:fouriercoeff}). Since $e$ is an $(\A_\th,\ol \t)$--compatible extension map, we have 
\[
 \left(e(\sg)\right)_l  =\ol\t(U_{-l}e(\sg))=\ol\t\circ e(U_{-l}\sg)=e_\C\circ\ol\t(U_{-l}\sg)=e_\C(\sigma_l) \text{ for all }l\in\Z^n.
\]
Therefore, since $e\circ i_\th=i_\th \circ e_\C $, we get $e(\sg_l U_0) = e_{\C}(\sg_l) U_0 = e(\sg)_l U_0$, and this, together with the continuity of $e$ and the $\A_\th$-compatibility of $e$, yields
\begin{align*} e\circ \ol \delta_j(\sg) 
  = \sum_{l\in\Z^n} e(\sg_l U_0) l_j U_l =\sum_{l\in\Z^n}  e(\sg)_l U_0 l_j U_l = \ol \delta_j (e(\sg))\, .
\end{align*}

\end{proof}

\begin{lem}
\label{lem:extensionmap}
(i) The set of normalised extension maps from the space $S_\C(\Z^n)$ into $S_\C(\R^n)$ is nonempty. 

\medskip

(ii) The set of $(\A_\th,\ol \t)$--compatible extension maps from the space $S_{\A_\th}(\Z^n)$ into $S_{\A_\th}(\R^n)$ is nonempty.

\medskip

(iii) The set of normalised $\ol \t$--compatible extension maps from the space $S_{\A_\th}(\Z^n)$ into $S_{\A_\th}(\R^n)$ is nonempty.

\medskip

(iv) If $\wt \sg$ and $\wt \sg'$ are two extensions of a given symbol $\sg$ in $S_\B(\Z^n)$, then $\wt \sg\sim \wt \sg'$. In particular, if $e,e'$ are two extension maps, then $e-e'$ maps $S_\B(\Z^n)$ into $S_\B^{-\infty}(\R^n)$.

\medskip

(v) For all $\sg,\tau\in S_\B(\Z^n)$ and for any extension map $e:S_\B(\Z^n)\to S_\B(\R^n)$, $e(\sg\tau)\sim e(\sg)e(\tau)$.
\end{lem}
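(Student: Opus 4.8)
The plan is to prove the four assertions of Lemma~\ref{lem:extensionmap} in turn, with (iii) and (iv) being purely formal consequences of the difference-operator (resp.\ derivative) estimates already at hand, and (i), (ii) requiring an explicit construction.

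\textbf{Construction for (i) and (ii).} To build a normalised extension map on $S_\C(\Z^n)$ I would follow the standard recipe from \cite[Section 4.5]{RT4}: fix a function $\phi\in C^\infty(\R^n)$ with $\phi=1$ near $0$, $\supp\phi$ contained in the open unit cube centred at $0$, and such that $\sum_{k\in\Z^n}\phi(\xi-k)=1$ for all $\xi\in\R^n$ (a smooth partition of unity subordinate to the integer translates of a slightly enlarged cube); then for $\sg\in S_\C(\Z^n)$ set
\[
 e(\sg)(\xi):=\sum_{k\in\Z^n}\phi(\xi-k)\,\big(E\sg\big)(\xi-k,k),
\]
where $E$ is a discrete-to-continuous interpolation in the first slot built from finite differences via the discrete Taylor formula, so that $e(\sg)(k)=\sg(k)$ for $k\in\Z^n$ and the symbol estimates \eqref{pdeltasigma} for $\Delta^\b$ transfer to the estimates for $\del_\xi^\b$ (this is exactly \cite[Theorem 4.5.3]{RT4}); translation invariance $e\circ T_k=T_k\circ e$ is immediate from the $k$-independence of $\phi$ and $E$. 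Normalisation \eqref{eq:normalised} then holds because $\int_{\R^n}e(\sg)=\sum_k\int_{\R^n}\phi(\xi-k)(E\sg)(\xi-k,k)\,d\xi=\sum_k\sg(k)$ once one arranges, by a further correction supported in the cube and integrating to zero against the non-constant part, that $\int\phi(\eta)(E\sg)(\eta,k)\,d\eta=\sg(k)$; alternatively one normalises at the end by subtracting a suitable smoothing symbol built from $\sum_k\sg(k)-\int e(\sg)$. For (ii) one simply applies the same construction slot-wise to the complex-valued symbols $\sg_l$ of Lemma~\ref{lem:fouriercoeff}, i.e.\ defines $e_{\A_\th}(\sg):=\sum_l \iota_\th\big(e_\C(\sg_l)\big)U_l$; the rapid decay in $l$ from Lemma~\ref{lem:fouriercoeff} guarantees convergence in $S^m_{\A_\th}(\R^n)$, $\A_\th$-compatibility is clear since multiplication by $a=\sum a_jU_j$ only reshuffles the index $l$ linearly and $e_\C$ is linear, and $\ol\t$-compatibility holds because $\ol\t$ just extracts the $l=0$ term, which $e_{\A_\th}$ treats independently.

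\textbf{Proofs of (iii) and (iv).} For (iii), if $\wt\sg,\wt\sg'\in S_\B^{m}(\R^n)$ both restrict to $\sg$ on $\Z^n$, then $\rho:=\wt\sg-\wt\sg'\in S_\B^m(\R^n)$ vanishes on $\Z^n$; I claim $\rho\in S_\B^{-\infty}(\R^n)$. This is a Taylor-expansion argument: for any $\xi\in\R^n$ pick the nearest lattice point $k$, so $|\xi-k|\le \tfrac{\sqrt n}{2}$, and expand $\rho$ to order $M$ about $k$; since $\rho(k)=0$ the zeroth term drops, and one needs the first $M$ derivatives of $\rho$ at $k$ to be suitably small. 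To get that, one iterates: $\del_j\rho$ is again a symbol of order $m$ (of class $S_\B^m$), but it need not vanish on $\Z^n$ — instead one uses that $\rho$ vanishing on all of $\Z^n$ together with the symbol bound $|\del^\b\rho(\xi)|\le C\langle\xi\rangle^{m-|\b|}$ forces, by the mean value theorem along lattice segments, $|\rho(\xi)|\le C'\langle\xi\rangle^{m-1}$ near $\Z^n$, hence (covering $\R^n$ by unit cubes) everywhere; repeating this bootstrap on $\rho$ and on its derivatives gives $\rho\in S_\B^{m-N}(\R^n)$ for every $N$. (In the $\B=\A_\th$ case one runs the argument seminorm-by-seminorm using the $p_i$, which is harmless.) The ``in particular'' is immediate: $e(\sg)$ and $e'(\sg)$ are two extensions of the same $\sg$. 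For (iv): by definition $e(\sg\tau)$ is an extension of $\sg\tau$ and $e(\sg)e(\tau)$ is a symbol in $S_\B(\R^n)$ (product of symbols) whose restriction to $\Z^n$ is $\sg\tau$; so both are extensions of $\sg\tau$ and (iii) gives $e(\sg\tau)\sim e(\sg)e(\tau)$.

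\textbf{Main obstacle.} The only genuinely non-routine point is part (iii)'s claim that a symbol of order $m$ vanishing on the lattice actually lies in $S_\B^{-\infty}(\R^n)$ — this is the discrete analogue of ``a symbol whose full Taylor series vanishes is smoothing'', but here we only know vanishing at the integer points, not vanishing of all derivatives there, so the bootstrap via the mean value theorem along lattice edges (gaining one order of decay per iteration, uniformly because the lattice has bounded mesh) is the crux; everything else is either quoted from \cite{RT4} or a short formal manipulation. In writing it up I would isolate this as a short sublemma: \emph{if $\rho\in S_\B^m(\R^n)$ and $\rho|_{\Z^n}=0$ then $\rho\in S_\B^{-\infty}(\R^n)$}, prove it by the iteration just sketched, and then deduce (iii) and (iv) in two lines each.
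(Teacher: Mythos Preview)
Your reduction of (iv) to (iii) is exactly how the paper does it, and the overall plan is sound. Two substantive points, though.

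\textbf{On (i) and (ii).} Your construction works in principle but is more elaborate than needed, and the normalisation step (``once one arranges, by a further correction\ldots'') is left unjustified. The paper's route is shorter: fix an even $\rho_1\in C^\infty(\R,[0,1])$ with $\supp\rho_1\subset\,]-1,1[$ and $\rho_1(x)+\rho_1(1-x)=1$ on $[0,1]$, set $\rho(x)=\prod_j\rho_1(x_j)$, and define
\[
e(\sg)(\xi):=\sum_{k\in\Z^n}\wh\rho(\xi-k)\,\sg(k),
\]
with $\wh\rho$ the Fourier transform of $\rho$. That $e$ is an extension map is precisely the construction in \cite[Theorem~4.5.3]{RT4}; normalisation is automatic since $\int_{\R^n}\wh\rho=\rho(0)=1$; and because the kernel $\wh\rho$ is scalar, the \emph{same} formula already settles (ii), with $\A_\th$- and $\ol\t$-compatibility read off directly --- there is no need to extend each Fourier component $\sg_l$ separately and resum.

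\textbf{On (iii).} Here your bootstrap has a real gap. One pass of the mean value theorem gives $p_i(\rho(\xi))\le C\langle\xi\rangle^{m-1}$, but to iterate you would need $\rho\in S_\B^{m-1}$, i.e.\ $p_i(\del^\b\rho(\xi))\le C_\b\langle\xi\rangle^{m-1-|\b|}$ for \emph{all} $\b$, and this is not available: $\del^\b\rho$ does not vanish on $\Z^n$, so the same MVT step cannot be applied to it. In one dimension one can rescue this with iterated Rolle (zeros of $\rho$ force a zero of $\rho'$ in each unit interval, hence of $\rho''$ in each interval of length two, etc.), but in $\R^n$ that route becomes awkward. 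The clean argument --- essentially that of \cite[Theorem~4.5.3]{RT4}, which the paper simply cites --- goes via the asymptotic relation $\del_j=\log(I+\Delta_j)$ on symbols: at any lattice point $k\in\Z^n$ one has, for every $L$,
\[
\del^\a\rho(k)=\sum_{|\a|\le|\gamma|\le L}c_{\a,\gamma}\,\Delta^\gamma\rho(k)+O\bigl(\langle k\rangle^{m-L-1}\bigr),
\]
and since $\rho|_{\Z^n}=0$ forces every finite difference $\Delta^\gamma\rho(k)$ to vanish, all derivatives of $\rho$ decay faster than any power \emph{at lattice points}. A Taylor expansion of $\del^\b\rho$ to arbitrary order about the nearest lattice point then transfers this rapid decay to every $\xi\in\R^n$, yielding $\rho\in S_\B^{-\infty}$.
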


\begin{proof}  
$(i,ii,iii)$ Let $\rho_1\in C^\infty(\R,[0,1])$ be an even function such that $\supp \rho_1\subset ]-1,1[$ and $\rho_1(x)+\rho_1(1-x)=1$ for all $x\in [0,1]$. Define $\rho:\R^n \to [0,1]$ such that $\rho(x)=\rho_1(x_1)\rho_1(x_2)\cdots\rho_1(x_n)$ for all $x=(x_1,\ldots,x_n)\in \R^n$. Note that $\rho\in \SS(\R^n)$, $\rho(0)=1$, and its Fourier transform $\wh \rho$ satisfies the crucial property $\wh \rho(k)=\delta_{k,0}$, where $\delta_{k,0}$ stands for the Kronecker delta function. Define $e:S_\B(\Z^n)\to \B^{\R^n}$ as
\begin{equation}
\label{eq:enormalised}
 e(\sg)(\xi) := \sum_{k\in \Z^n}  \wh \rho(\xi-k)\, \sg(k)\, .
\end{equation}
Following the same arguments of the proof of  (the ``only if" part of) \cite[Theorem 4.5.3]{RT4}, we see that $e$ is an extension map from $S_\B(\Z^n)$ into $S_\B(\R^n)$. Moreover, $e$ is a normalised extension map if $\B=\C$ since for all $\sg\in S_\C^m(\Z^n)$ with $m<-n$
\[
 \int_{\R^n} e(\sigma) = \sum_{k\in \Z^n} \left(\int_{\R^n} \wh \rho(\xi-k)\,d\xi\right) \sg(k) =\rho(0)\,\sum_{k\in \Z^n}\sigma(k)=\sum_{k\in \Z^n}\sigma(k),
\]
where we use Fubini's Theorem since the map $(\xi,k)\mapsto \wh \rho(\xi-k)\, \sg(k)$ belongs to $L^1(\R^n\times\Z^n,\C)$, because $\wh \rho$ is a Schwartz function and $\sg\in L^1(\Z^n,\C)$.\\
One easily also checks that $e$ is an $(\A_\th,\ol \t)$--compatible extension map if $\B=\A_\th$. Hence $e$ defines a normalised $\ol \t$--compatible extension map from the space $S_{\A_\th}(\Z^n)$ into $S_{\A_\th}(\R^n)$.\\
Let us now prove the continuity of the extension map $e:S_{\A_\th}(\Z^n)\to S_{\A_\th}(\R^n)$. Let $\sigma\in S_{\A_\th}(\Z^n)$. By definition, $e(\sg)(\xi):= \sum_{k\in \Z^n} \wh \rho(\xi-k)\, \sg(k)\,$. 
Thus, from \cite[Lemma 4.5.1]{RT4}, given a symbol $\sg$ and multiindices $\a,\b\in \N^n$ (see \eqref{Frechetseminorms}) we obtain:
\begin{align*}
\del_\xi^\b (\ol\delta^\a e(\sg)) (\xi) 
&= \sum_{k\in \Z^n} (\del_\xi^\b \wh \rho) (\xi-k)\, (\ol \delta^\a\sg)(k) \\
&= \sum_{k\in \Z^n} (\ol \Delta^\b \phi_\b)(\xi-k)\, (\ol \delta^\a\sg)(k)\\
&= (-1)^{|\b|}\sum_{k\in \Z^n} \phi_\b(\xi-k)\, (\Delta^\b \ol \delta^\a\sg)(k),
\end{align*}
where $\ol \Delta_j = I- T_{-e_j}$, $\ol \Delta^\b = \ol \Delta_1^{\b_1}\cdots \ol \Delta_n^{\b_n}$, and where the $\phi_\b$ are functions in $\SS(\R^n)$. Using the notation $\xi-\Z^n := \set{\xi-k \ :\ k\in \Z^n}$ and Peetre's inequality, 
the above computation implies that for all $\sg\in S^m_{\A_\th}(\Z^n)$, and all $\a,\b\in \N^n$,
\begin{align*}
 \norm{\del_\xi^\b (\ol\delta^\a e(\sg))(\xi)}
 &\leq \sum_{k\in \Z^n} | \phi_\b(\xi-k)|\, \norm{(\Delta^\b \ol \delta^\a(\sg))(k)}\\
 &\leq  p^{(m)}_{\a,\b}(\sg)  \sum_{k\in \Z^n} | \phi_\b(\xi-k)|\,\langle k\rangle^{m-|\b|}\\
 & \leq p^{(m)}_{\a,\b}(\sg)  \sum_{\eta\in \xi-\Z^n} | \phi_\b(\eta)|\,\langle \xi -\eta\rangle^{m-|\b|}\\
 & \leq  \langle \xi\rangle^{m-|\b|} p^{(m)}_{\a,\b}(\sg) 2^{|m-|\b||} \sum_{\eta\in \xi-\Z^n} | \phi_\b(\eta)|\, \langle \eta\rangle^{|m-|\b||} \\
 &\leq \langle \xi\rangle^{m-|\b|} C_{\b,m} p^{(m)}_{\a,\b}(\sg), 
\end{align*}
where $C_{\b,m}:= \sup_{\xi\in \R^n} g_{\b,m}(\xi)$, and $g_{\b,m}$ is the bounded ($\Z^n$--periodic) function $\xi\mapsto 2^{|m-|\b||} \sum_{\eta\in \xi-\Z^n} |\phi_\b(\eta)|\, \langle \eta\rangle^{|m-|\b||}$.
This yields the following estimate 
\[
p^{(m)}_{\a,\b}(e(\sg)) \leq C_{\b,m} p^{(m)}_{\a,\b} (\sg),
\]
from which we deduce the continuity of the extension map $e$ for the Fr\'echet topologies of symbols spaces.

\medskip

$(iv)$ This follows from a straightforward modification of the proof for the commutative case \cite[Theorem 4.5.3]{RT4}.

\medskip

$(v)$ This follows from $(iv)$ since $e(\sg\tau)_{|\Z^n}=\sg\tau$.
\end{proof}

\begin{defn}
 A symbol $\tau\in S_\B(\R^n)$ is \emph{positively homogeneous of degree} $m\in \C$ if $\tau\in S_\B^{\Real(m)}(\R^n)$ and $\tau(t \xi) = t^m \tau(\xi)$ for all $t>1$ and $|\xi|\geq 1$. We will denote by $HS^m_{\B}(\R^n)$\index{$HS^m_{\B}(\R^n)$} the space of all positively homogeneous symbols of degree $m$ in $S^{\Real(m)}_{\B}(\R^n)$.
\end{defn}

The following fact will be used later in the crucial Lemma \ref{lem:translationinv}:

\begin{lem}
\label{closedness} 
Let $m\in \C$. The space $HS^m_\B(\R^n)$ is a closed subspace of the Fr\'echet space $S^{\Real(m)}_\B(\R^n)$.
\end{lem}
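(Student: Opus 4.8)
The plan is to show that $HS^m_\B(\R^n)$ is closed in $S^{\Real(m)}_\B(\R^n)$ by exhibiting it as an intersection of kernels of continuous linear functionals (or, more precisely, of continuous linear maps between Fréchet spaces). The defining conditions for a symbol $\tau\in S^{\Real(m)}_\B(\R^n)$ to lie in $HS^m_\B(\R^n)$ are the homogeneity relations $\tau(t\xi)=t^m\tau(\xi)$ for all $t>1$ and all $|\xi|\ge 1$. The key observation is that for each fixed $t>1$ and each fixed $\xi$ with $|\xi|\ge 1$, the map
\[
\Phi_{t,\xi}:S^{\Real(m)}_\B(\R^n)\to \B\,,\qquad \Phi_{t,\xi}(\tau):=\tau(t\xi)-t^m\tau(\xi)
\]
is linear, and it is continuous for the Fréchet topology of $S^{\Real(m)}_\B(\R^n)$ (recall the seminorms $p^{(m)}_{\a,\b}$ in \eqref{Frechetseminorms}, or their analogue for smooth symbols): indeed the evaluation $\tau\mapsto\tau(\eta)$ at a fixed point $\eta$ is continuous from $S^{\Real(m)}_\B(\R^n)$ into $\B$, since $p_i(\tau(\eta))\le\langle\eta\rangle^{\Real(m)}\,p^{(m)}_{i,0}(\tau)$ for each seminorm $p_i$ of $\B$.

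The main steps, in order, are: first, record that $HS^m_\B(\R^n)=\bigcap_{t>1}\bigcap_{|\xi|\ge 1}\Ker\Phi_{t,\xi}$ as a subset of $S^{\Real(m)}_\B(\R^n)$; second, prove continuity of each $\Phi_{t,\xi}$ via the evaluation estimate just mentioned, using that $\B$ is a Fréchet algebra with seminorms $(p_i)_{i\in I}$ and that $S^{\Real(m)}_\B(\R^n)$ carries the seminorms $p_i(\partial_\xi^\b\,\cdot\,(\xi))\langle\xi\rangle^{-\Real(m)+|\b|}$; third, conclude that each $\Ker\Phi_{t,\xi}$ is a closed linear subspace (preimage of the closed set $\{0\}$ under a continuous map), and hence so is the arbitrary intersection $HS^m_\B(\R^n)$. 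A small point worth a sentence: one must check that if a net (or sequence, since Fréchet spaces are metrizable) $\tau_j\to\tau$ in $S^{\Real(m)}_\B(\R^n)$ with each $\tau_j$ homogeneous, then the limit $\tau$ indeed satisfies $\tau\in S^{\Real(m)}_\B(\R^n)$ — but this is automatic since we are taking the limit inside that Fréchet space — and that the pointwise homogeneity identity passes to the limit, which is exactly the content of the continuity of $\Phi_{t,\xi}$.

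I do not expect a serious obstacle here: the only thing to be slightly careful about is that "positively homogeneous of degree $m$" is defined only for $t>1$ and $|\xi|\ge 1$, so $HS^m_\B(\R^n)$ is cut out by this restricted family of evaluation conditions rather than by a global homogeneity condition; but this changes nothing, since an intersection of closed subspaces over \emph{any} index set is closed. If one preferred a sequential argument (legitimate, as the ambient space is metrizable), the same estimate $p_i\big(\tau_j(t\xi)-t^m\tau_j(\xi)\big)\to 0$ forces $\tau(t\xi)=t^m\tau(\xi)$ for the limit, giving closedness directly. Either way the proof is short; the substance is entirely the continuity of point evaluations on the symbol space, which follows immediately from the very definition of the symbol seminorms.
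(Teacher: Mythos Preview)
Your proposal is correct and takes essentially the same approach as the paper: both express $HS^m_\B(\R^n)$ as an intersection of preimages of $\{0\}$ under continuous evaluation-type maps. The paper organises this slightly differently, first introducing $L_t:\sigma\mapsto\sigma(t\cdot)-t^m\sigma$ as a continuous endomorphism of $S^{\Real(m)}_\B(\R^n)$ and writing $HS^m_\B(\R^n)=\bigcap_{t>1}L_t^{-1}(C^\infty_B)$ with $C^\infty_B$ the closed subspace of symbols vanishing for $|\xi|\ge 1$, but unwinding this gives exactly your family $\Phi_{t,\xi}=\delta_\xi\circ L_t$, so the arguments are the same in substance.
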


\begin{proof}
Define for all $t>1$, $L_t : \sg \mapsto \sg(t\cdot) -t^m \sg$. It is easy to check that $L_t$ is a continuous linear operator from $S^{\Real(m)}_\B(\R^n)$ into itself. By definition, $HS^m_\B(\R^n)= \cap_{t>1} L_t^{-1}(C^\infty_{B})$ where $C^\infty_{B}$ denotes the space of all smooth functions $\R^n\to \B$ that are zero outside the open unit ball. We have $C^\infty_{B}=\cap_{|\xi|\geq 1} ({\rm ev}_\xi\circ \iota)^{-1}(0)$, where $\iota$ is the canonical continuous inclusion of $S^{\Real(m)}_\B(\R^n)$ into $C^\infty(\R^n,\B)$ and ${\rm ev}_\xi$ is the continuous linear map $\sg\mapsto \sg(\xi)$ from $C^\infty(\R^n,\B)$ into $\B$. Thus, $C^\infty_B$ is closed in $S^{\Real(m)}_\B(\R^n)$, and the result follows.
\end{proof}

\begin{defn}
A symbol $\sg \in S_{\B}(\R^n)$ is called \emph{positively quasihomogeneous symbol of degree $m\in \C$} if there exists a positively homogeneous symbol $\tau$ of degree $m$ such that $\tau\sim \sg$.  We will denote by $QS^m_{\B}(\R^n)$\index{$QS^m_{\B}(\R^n)$} the space of all positively quasihomogeneous symbols of degree $m$.
\end{defn}

\begin{prop}
\label{prop:quasihom}
 Let $m\in \C$ and $\sg\in S^{\Real(m)}_{\B}(\Z^n)$. The following are equivalent:
 
(i) There exists an extension map  $e: S_{\B}(\Z^n)\to S_{\B}(\R^n)$, such that $e(\sg)\in QS^m_{\B}(\R^n)$.

(ii) For any extension map $e: S_{\B}(\Z^n)\to S_{\B}(\R^n)$, $e(\sg)\in QS^m_{\B}(\R^n)$.

If one of these conditions is satisfied, we say that $\sg$ is \emph{positively quasihomogeneous of degree} $m$, and we write $\sg\in QS^m_\B(\Z^n)$. 
\end{prop}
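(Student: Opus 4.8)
The plan is to reduce everything to two facts already at our disposal: first, the property of being positively quasihomogeneous of degree $m$ depends only on a symbol modulo $S^{-\infty}_\B(\R^n)$; second, by Lemma \ref{lem:extensionmap}(iii), any two extension maps differ by a map into $S^{-\infty}_\B(\R^n)$, so on the fixed symbol $\sg$ all extension maps agree up to a smoothing symbol. Combined with the nonemptiness of the set of extension maps (Lemma \ref{lem:extensionmap}(i) for $\B=\C$, and (ii) for $\B=\A_\th$), the equivalence of (i) and (ii) follows.

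In detail, (ii)$\Rightarrow$(i) is immediate since there is at least one extension map. For (i)$\Rightarrow$(ii), suppose $e$ is an extension map with $e(\sg)\in QS^m_\B(\R^n)$, and pick $\tau\in HS^m_\B(\R^n)$ with $\tau\sim e(\sg)$, i.e.\ $\tau-e(\sg)\in S^{-\infty}_\B(\R^n)$; note that both $\tau$ and $e(\sg)$ lie in $S^{\Real(m)}_\B(\R^n)$, since extension maps preserve the order (Definition \ref{def:extension}) and $\sg\in S^{\Real(m)}_\B(\Z^n)$, so the relation $\sim$ is well posed. Let $e'$ be any extension map. By Lemma \ref{lem:extensionmap}(iii), $e'(\sg)-e(\sg)\in S^{-\infty}_\B(\R^n)$, hence
\[
\tau-e'(\sg)=\bigl(\tau-e(\sg)\bigr)+\bigl(e(\sg)-e'(\sg)\bigr)\in S^{-\infty}_\B(\R^n),
\]
that is $\tau\sim e'(\sg)$. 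Since $\tau$ is positively homogeneous of degree $m$, this shows $e'(\sg)\in QS^m_\B(\R^n)$, which is (ii). The final clause of the statement — that one then says $\sg$ is positively quasihomogeneous of degree $m$ and writes $\sg\in QS^m_\B(\Z^n)$ — is just a name for this equivalence.

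There is no real obstacle here: the entire content is packaged in Lemma \ref{lem:extensionmap}(iii) (two extensions of the same discrete symbol are asymptotically equal) together with the observation that $QS^m_\B(\R^n)$ is stable under perturbation by a smoothing symbol. The only point requiring a moment's care is the bookkeeping of orders: one checks that extension maps send $S^{\Real(m)}_\B(\Z^n)$ into $S^{\Real(m)}_\B(\R^n)$, so that the relations $\tau\sim e(\sg)$ and $\tau\sim e'(\sg)$ are comparisons inside a single Fr\'echet symbol space of order $\Real(m)$, matching the order of the homogeneous representative $\tau\in HS^m_\B(\R^n)\subset S^{\Real(m)}_\B(\R^n)$. (Note that the closedness statement Lemma \ref{closedness} is not needed for this proposition; it is recorded for later use in Lemma \ref{lem:translationinv}.)
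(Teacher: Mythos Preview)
Your proof is correct and is precisely the unpacking of the paper's one-line proof, which simply reads ``This follows directly from Lemma \ref{lem:extensionmap}.'' You have made explicit the two ingredients hidden in that sentence: existence of at least one extension map (parts (i)--(ii) of the lemma) for (ii)$\Rightarrow$(i), and the fact that any two extensions of $\sg$ differ by a smoothing symbol (part (iii)) for (i)$\Rightarrow$(ii), together with the trivial observation that $QS^m_\B(\R^n)$ is stable under smoothing perturbations.
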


\begin{proof}
This follows directly from Lemma \ref{lem:extensionmap}.
\end{proof}

\subsection{The algebra of noncommutative toroidal classical symbols}

With the definitions of homogeneity and quasihomogeneity we construct the definition of toroidal classical symbols:

\begin{defn}
 Let $m\in \C$. A symbol $\sg\in S_\B(\R^n)$ is \emph{classical (or one--step polyhomogeneous) of order} $m$ if there exists a sequence $(\sg_{[m-j]})_{j\in \N}$\index{$\sg_{[m-j]}$} satisfying $\sg_{[m-j]}\in HS^{m-j}_\B(\R^n)$ for all $j\in \N$, and such that $\sg\sim \sum_j \sg_{[m-j]}$. Equivalently, we can replace homogeneous by quasihomogeneous in this definition. We denote by $CS_\B^m(\R^n)$\index{$CS_\B^m(\R^n)$} the space of all classical symbols of order $m$, $CS_\B(\R^n)$\index{$CS_\B(\R^n)$} the set of all classical symbols, $CS_\B^{\Z}(\R^n)$\index{$CS_\B^{\Z}(\R^n)$} the space of all classical symbols of integer order, and by $CS_\B^{\notin \Z}(\R^n)$\index{$CS_\B^{\notin \Z}(\R^n)$} the set of all classical symbols of non--integer order.
\end{defn}

Recall that if $\sg\in CS_\B^m(\R^n)$ then there is a unique sequence $([\sg]_{m-j})_j$\index{$[\sg]_{m-j}$} such that $[\sg]_{m-j}$ is an equivalence class (modulo smoothing symbols) of a positively quasihomogeneous symbol of degree $m-j$, and $\sg\sim \sum \sg_{[m-j]}$ for all sequences $(\sg_{[m-j]})_j$ such that $\sg_{[m-j]}\in [\sg]_{m-j}$ for all $j\in\N$. \\

We now extend these usual definitions to the case of discrete symbols:

\begin{defn}
\label{defclass}
 Let $m\in \C$. A symbol $\sg\in S_{\B}(\Z^n)$ is \emph{classical (or one--step polyhomogeneous) of order} $m$ if there exists a sequence $(\sg_{[m-j]})_{j\in \N}$ satisfying $\sg_{[m-j]}\in QS^{m-j}_{\B}(\Z^n)$ for all $j\in \N$, and such that $\sg\sim \sum_j \sg_{[m-j]}$ (see Proposition \ref{prop:quasihom}). Such a sequence will be called \emph{positively quasihomogeneous resolution} of $\sg$. If $\sg_{[m-j]}\in HS^{m-j}_{\B}(\Z^n)$ for all $j\in \N$ the sequence $(\sg_{[m-j]})_j$ will be called \emph{positively homogeneous resolution} of $\sg$.\\
 We denote by $CS^{m}_{\B}(\Z^n)$\index{$CS^{m}_{\B}(\Z^n)$} the space of all classical symbols of order $m$ and by $CS_{\B}(\Z^n):=\cup_{m\in \C} CS_{\B}^m(\Z^n)$\index{$CS_{\B}(\Z^n)$} the set of all classical symbols. The set $CS^{\Z}_{\B}(\Z^n):=\cup_{m\in \Z} CS^m_{\B}(\Z^n)$\index{$CS^{\Z}_{\B}(\Z^n)$} of all classical symbols of integer order forms a subalgebra of the algebra of linear combinations of elements of the monoid $CS_{\B}(\Z^n)$.\\
 We shall use for convenience the notation $CS^{<-n}_{\B}(\Z^n):= \cup_{\Real(m)<-n}CS_{\B}^m(\Z^n)$\index{$CS^{<-n}_{\B}(\Z^n)$} and denote by $CS_{\B}^{\notin \Z}(\Z^n)$\index{$CS_{\B}^{\notin \Z}(\Z^n)$} the set $CS_{\B}(\Z^n)\backslash CS_{\B}^{\Z}(\Z^n)$.
\end{defn}

\begin{lem}
 \label{techhom}
   (i) Let $\sg,\sg'\in QS^m_{\B}(\Z^n)$ be such that $\sg-\sg'\in S^{\Real(m)-1}_{\B}(\Z^n)$. Then $\sg\sim\sg'$. 
  
  \medskip
   
   (ii) Let $\sg\in CS^m_{\B}(\Z^n)$. Then there exists a unique sequence of $\sim$--equivalence classes $([\sg]_{m-j})_{j\in \N}$ with $[\sg]_{m-j}\in QS^{m-j}_{\B}/\sim$ such that $\sg\sim \sum_{j} \sg_{[m-j]}$ for any sequence $(\sg_{[m-j]})_{j\in \N}$ with $\sg_{[m-j]}\in [\sg]_{m-j}$.
    
   If $\sg\in CS_\B(\Z^n)$, and $s\in \C$, we set $[\sg]_s$ to be $[\sg]_{m-j}$ when there is $j\in \N$ and $m\in \C$ such that $\sg\in CS_\B^m(\Z^n)$ and $s=m-j$, and zero otherwise. 
\end{lem}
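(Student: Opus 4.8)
The plan is to mimic, in the discrete setting, the standard argument from the $\R^n$ case, using an extension map to transfer the uniqueness statement. For part $(i)$, suppose $\sg,\sg'\in QS^m_{\B}(\Z^n)$ with $\sg-\sg'\in S^{\Real(m)-1}_{\B}(\Z^n)$. By Proposition \ref{prop:quasihom}, fix any extension map $e$; then $e(\sg),e(\sg')\in QS^m_{\B}(\R^n)$, so there are positively homogeneous symbols $\tau,\tau'$ of degree $m$ with $\tau\sim e(\sg)$ and $\tau'\sim e(\sg')$. By Lemma \ref{lem:extensionmap}$(iii)$ the extension $e$ is well defined modulo smoothing symbols, hence $e(\sg)-e(\sg')\sim e(\sg-\sg')$, and since $\sg-\sg'$ has order $\Real(m)-1$ its extension also has order $\Real(m)-1$ (continuity of $e$ on each $S^{m'}_\B(\R^n)$). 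Therefore $\tau-\tau'\in S^{\Real(m)-1}_\B(\R^n)$; but $\tau-\tau'$ is itself positively homogeneous of degree $m$ (difference of two such), and a positively homogeneous symbol of degree $m$ lying in $S^{\Real(m)-1}_\B(\R^n)$ must vanish for $|\xi|\ge1$, hence is smoothing. Thus $e(\sg)\sim e(\sg')$, i.e. $e(\sg)-e(\sg')\in S^{-\infty}_\B(\R^n)$, and restricting to $\Z^n$ (using Lemma \ref{restr}, which sends smoothing to smoothing) gives $\sg-\sg'\in S^{-\infty}_\B(\Z^n)$, that is $\sg\sim\sg'$.

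For part $(ii)$, existence of a quasihomogeneous resolution is built into Definition \ref{defclass}: some sequence $(\sg_{[m-j]})_j$ with $\sg_{[m-j]}\in QS^{m-j}_\B(\Z^n)$ and $\sg\sim\sum_j\sg_{[m-j]}$ exists. Setting $[\sg]_{m-j}$ to be the class of $\sg_{[m-j]}$ modulo $\sim$, I need to check this class is independent of the choice of resolution and that the characterising property holds. The argument proceeds by induction on $j$. For $j=0$: if $(\sg_{[m-j]})_j$ and $(\sg'_{[m-j]})_j$ are two resolutions, then $\sg_{[m]}-\sg'_{[m]}=(\sg_{[m]}-\sg)+(\sg-\sg'_{[m]})$, and each bracketed term, being $\sg$ minus a partial sum starting at the top, lies in $S^{\Real(m)-1}_\B(\Z^n)$ by the definition of the asymptotic relation; hence $\sg_{[m]}-\sg'_{[m]}\in S^{\Real(m)-1}_\B(\Z^n)$, and part $(i)$ gives $\sg_{[m]}\sim\sg'_{[m]}$, so $[\sg]_m$ is well defined. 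For the inductive step, assuming $\sg_{[m-i]}\sim\sg'_{[m-i]}$ for $i<j$, one writes $\sg_{[m-j]}-\sg'_{[m-j]}$ as a telescoping combination of $\sg-\sum_{i\le j}\sg_{[m-i]}$, $\sg-\sum_{i\le j}\sg'_{[m-i]}$ (both in $S^{\Real(m)-j-1}_\B(\Z^n)$) and the already-known smoothing differences $\sg_{[m-i]}-\sg'_{[m-i]}$ for $i<j$; the net effect is that $\sg_{[m-j]}-\sg'_{[m-j]}\in S^{\Real(m-j)-1}_\B(\Z^n)$, and part $(i)$ again yields $\sg_{[m-j]}\sim\sg'_{[m-j]}$. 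The final clause — that for \emph{any} choice $\sg_{[m-j]}\in[\sg]_{m-j}$ one has $\sg\sim\sum_j\sg_{[m-j]}$ — follows because replacing each $\sg_{[m-j]}$ by a $\sim$-equivalent representative changes each partial sum $\sum_{j\le N}\sg_{[m-j]}$ only by a smoothing symbol, so the asymptotic relation $\sg-\sum_{j\le N}\sg_{[m-j]}\in S^{m_{N+1}}_\B(\Z^n)$ is preserved.

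The last sentence of the statement, defining $[\sg]_s$ for $\sg\in CS_\B(\Z^n)$ and arbitrary $s\in\C$, requires only checking that the definition is unambiguous: if $s=m-j=m'-j'$ arises from two representations $\sg\in CS^m_\B(\Z^n)\cap CS^{m'}_\B(\Z^n)$, one must verify that $[\sg]_{m-j}=[\sg]_{m'-j'}$. This is immediate once one observes that if $\sg\in CS^m_\B(\Z^n)$ with $\Real(m')<\Real(m)$ then a quasihomogeneous resolution of $\sg$ as an order-$m'$ symbol is obtained by prepending zeros (the classes $[\sg]_{m'-i}$ for $m'-i$ not of the form $m-j$ are zero), so the nonzero classes coincide on the overlap; and the case $\Real(m)=\Real(m')$ forces $m=m'$ and $j=j'$.

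I expect the only genuine obstacle to be bookkeeping: making the induction in $(ii)$ clean, i.e. correctly tracking which symbol spaces the various telescoped differences land in so that part $(i)$ applies at each stage. There is no analytic difficulty — everything reduces to Proposition \ref{prop:quasihom}, Lemma \ref{lem:extensionmap}, Lemma \ref{restr}, and the elementary fact that a positively homogeneous symbol of degree $m$ which happens to be of order $\Real(m)-1$ is smoothing — but one must be careful that the extension map is applied consistently and that restriction back to $\Z^n$ does not lose the smoothing property, which is exactly guaranteed by Lemma \ref{restr}.
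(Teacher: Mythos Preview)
Your proof is correct and follows essentially the same approach as the paper's: transfer to $\R^n$ via an extension map, use that a positively homogeneous symbol of degree $m$ lying in $S^{\Real(m)-1}_\B(\R^n)$ vanishes for $|\xi|\ge1$ (the paper spells out this scaling argument explicitly, whereas you cite it as an elementary fact), restrict back via Lemma~\ref{restr}, and then run the standard induction for~$(ii)$. One cosmetic remark: since $e$ is linear you have $e(\sg)-e(\sg')=e(\sg-\sg')$ on the nose, so the appeal to Lemma~\ref{lem:extensionmap}$(iii)$ at that step is unnecessary --- the paper in fact applies the extension directly to $\tau:=\sg-\sg'$, which streamlines the bookkeeping slightly.
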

 
\begin{proof}
   $(i)$ Define $\tau:=\sg-\sg'\in S^{\Real(m)-1}_{\B} (\Z^n) \cap QS^m_{\B}(\Z^n) $. Let $e$ be an extension map $S_{\B}(\Z^n)\to S_{\B}(\R^n)$. It follows from Proposition \ref{prop:quasihom} that $e(\tau)\in S^{\Real(m)-1}_{\B} (\R^n) \cap QS^m_{\B}(\R^n)$. Thus, there is $\tau'\in S^{\Real(m)-1}_{\B}(\R^n)$ such that $e(\tau)\sim \tau'$ and $\tau'(t\xi)=t^m \tau'(\xi)$ for all $t>1$ and $|\xi|\geq 1$. Moreover, there is $C\in \R$ such that for all $\xi\in \R^n$, $\norm{\tau'(\xi)}\leq C \langle \xi\rangle^{\Real(m)-1}$. As a consequence, we obtain for all $t>1$ and for all $\xi\in\R^n\backslash B(0,1)$, ($B(0,1)$ is the ball with center 0 and radius 1),
   \[
   \norm{\tau'(\xi)} \leq C t^{-\Real(m)}\langle t\xi\rangle^{\Real(m)-1} = C t^{-1} \, (1/t^2+|\xi|^2)^{(\Real(m)-1)/2}
   \]
   which implies that $\tau'(\xi)=0$ when $|\xi|\geq 1$, and in particular that $\tau'$ is compactly supported. As a consequence, $\tau'$ and therefore $e(\tau)$, belong to $S^{-\infty}_{\B}(\R^n)$. Lemma \ref{restr} now yields that $\tau\in S^{-\infty}_{\B}(\Z^n)$.
 
 \medskip
 
  $(ii)$ The existence is clear by definition of $CS_{\A_\th}^m(\Z^n)$. To prove uniqueness, suppose that $(c_{m-j})_j$ and $(c'_{m-j})_j$ are two such sequences, and let $(\sg_{[m-j]})_j$ (resp.\ $(\sg'_{[m-j]})_j$) be a sequence such that $\sg_{[m-j]}\in c_{m-j}$ (resp.\ $\sg'_{[m-j]}\in c'_{m-j}$) for all $j$.
  If we prove that $\sg_{[m-j]}\sim \sg'_{[m-j]}$ for all $j\in \N$, we are done. Let us check this for $j=0$. We have $\sg-\sg_{[m]}$ and $\sg-\sg'_{[m]}$ belong to $S^{\Real(m)-1}_{\B}(\Z^n)$. Thus, $\sg_{[m]}-\sg'_{[m]}\in S^{\Real(m)-1}_{\B}(\Z^n)$. Since $\sg_{[m]}-\sg'_{[m]}\in QS^m_{\B}(\Z^n)$, $(i)$ implies that $\sg_{[m]}\sim \sg'_{[m]}$. Suppose now that $\sg_{[m-j]}\sim \sg'_{[m-j]}$ for all $j\in \set{0,\cdots,p}$ for some $p\in \N$. We have $\sum_{j=0}^{p+1} \sg_{[m-j]}-\sg'_{[m-j]} \in S^{m-(p+2)}_{\B}(\Z^n)$, which implies by induction hypothesis, that $\sg_{[m-(p+1)]}-\sg'_{[m-(p+1)]}\in S^{m-(p+2)}_{\B}(\Z^n)$. Thus, $(i)$ implies that $\sg_{[m-(p+1)]}\sim \sg'_{[m-(p+1)]}$.
\end{proof}
 
\begin{prop}
 \label{prop:classical}
 Let $m\in \C$. The following are equivalent:
 
 (i) $\sg\in CS^m_{\B}(\Z^n)$.
 
 (ii) There exists an extension map $e:S_{\B}(\Z^n)\to S_\B(\R^n)$ such that $e(\sg)\in CS_\B^m(\R^n)$.
 
 (iii) For any extension map $e:S_{\B}(\Z^n)\to S_\B(\R^n)$, $e(\sg)\in CS_\B^m(\R^n)$. 
 
Moreover, if $\sg\in CS^m_\B(\Z^n)$, and $e:S_{\B}(\Z^n)\to S_{\B}(\R^n)$ is an extension map, then $e(\sg_{[m-j]}) \sim (e(\sg))_{[m-j]}$ for all $\sg_{[m-j]}$ in the equivalence class $[\sg]_{m-j}$ and $(e(\sg))_{[m-j]}$ in the equivalence class $[e(\sg)]_{m-j}$. 
In other words, taking extensions and taking quasihomogeneous parts are commuting operations modulo smoothing terms.
\end{prop}
 
\begin{proof}
 $(i)\Rightarrow (iii)$ Suppose that $\sg$ is in $CS_{\B}(\Z^n)$ and let $e$ be an extension map. Let $(\sg_{[m-j]})_j$ be a sequence such that $\sg\sim \sum_j \sg_{[m-j]}$ and $\sg_{[m-j]}\in QS^{m-j}_{\B}(\Z^n)$. We have for all $j\in \N$, $e(\sg)-e(\sum_{i=0}^j \sg_{[m-i]}) \in S_\B^{\Real(m)-j-1}(\R^n)$. Thus since $e(\sg_{[m-i]})\in QS^{m-i}_\B(\R^n)$ for all $i=0,\ldots,j$, we obtain that $e(\sg)\in CS_\B^m(\R^n)$.
 
 $(iii)\Rightarrow (ii)$ This is straightforward.
 
 $(ii)\Rightarrow (i)$ Suppose that there is an extension map $e$ such that $e(\sg)\in CS_\B^m(\R^n)$. Let $(\sg_{[m-j]})_j$ be a sequence 
 such that $e(\sg)\sim \sum_j \sg_{[m-j]}$ and $\sg_{[m-j]}\in QS^{m-j}_{\B}(\R^n)$. Lemma \ref{restr} implies that $\sg-\sum_{i=0}^j (\sg_{[m-i]})_{|\Z^n}\in S^{\Real(m)-j-1}_{\B}(\Z^n)$ for all $j\in \N$. 
 Since $e((\sg_{[m-i]})_{|\Z^n})-\sg_{[m-i]}\in S_\B^{-\infty}(\R^n)$ by Lemma \ref{lem:extensionmap} $(iii)$, it follows that 
 $e((\sg_{[m-i]})_{|\Z^n})\in QS^{m-i}_{\B}(\R^n)$, and thus $(\sg_{[m-i]})_{|\Z^n}\in QS^{m-i}_\B(\Z^n)$.
 This yields the result.
  
 The last statement follows from Lemma \ref{techhom}.
\end{proof}
 
\begin{rk}
 \label{rk:transl}
 Note that $CS^m_{\B}(\Z^n)$ is stable under the $\Z^n$--translations $T_l$. Indeed, if $e:S_{\B}(\Z^n)\to S_{\B}(\R^n)$ is an extension map, then a Taylor expansion shows that $T_{l}$ maps $CS^m_{\B}(\R^n)$ into $CS^{m}_{\B}(\R^n)$. Since $T_{l} \circ e = e\circ T_{l}$, it follows that $T_{l}$ maps $CS^m_{\B}(\Z^n)$ into $CS^{m}_{\B}(\Z^n)$.
 Similarly, note by Remark \ref{rk:extension delta}, that the space $CS^m_{\A_\th}(\Z^n)$ is stable under the $\ol \delta_j$ maps given in \eqref{bardelta}.
\end{rk}
  
\begin{lem}
 \label{lem:permut}
 (i) $\ol \t $ maps $CS_{\A_\th}^m(\Z^n)$ (resp.\ $QS_{\A_\th}^m(\Z^n)$) into $CS_\C^m(\Z^n)$ (resp.\ $QS_{\C}^m(\Z^n)$) for all $m\in \C$.
 Similarly, this holds for spaces of smooth classical symbols.
 
 \medskip
 
 (ii) For all $m\in \C$ and $\sg\in CS^m_{\A_\th}(\Z^n)$, we have $\ol \t(\sg_{[m-j]}) \sim (\ol \t(\sg))_{[m-j]}$ for all $\sg_{[m-j]}\in [\sg]_{m-j}$ and $(\ol \t(\sg))_{[m-j]}\in [\ol \t(\sg)]_{m-j}$. The same property holds for a smooth symbol $\sg\in CS^m_{\A_\th}(\R^n)$.
 In other words, taking pointwise traces and taking quasihomogeneous parts are commuting operations modulo smoothing terms. 
\end{lem}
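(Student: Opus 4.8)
The plan is to prove the three parts in turn, each time exploiting the two elementary facts that $\ol\t$ is evaluation of the $U_0$--coefficient --- so it acts \emph{pointwise} in the $\xi$-- or $k$--variable, and hence commutes with the translations $T_l$, with restriction to $\Z^n$, and with dilations --- and that $\ol\t\circ\iota_\th=\Id$ (Definition \ref{defn:bar t}).

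For (i), I would verify the three defining properties of an extension map for $e_\C:=\ol\t\circ e\circ\iota_\th$. Linearity is clear, and $e_\C$ sends $S^m_\C(\Z^n)$ continuously into $S^m_\C(\R^n)$ by composing the (trivial) continuity of $\iota_\th$ on order--$m$ symbols, the continuity of $e$ from Definition \ref{def:extension}, and the continuity of $\ol\t$ from Definition \ref{defn:bar t}. That $e_\C(\tau)_{|\Z^n}=\tau$ follows from $(\ol\t\,\wt\sg)_{|\Z^n}=\ol\t(\wt\sg_{|\Z^n})$, the extension property $e(\iota_\th\tau)_{|\Z^n}=\iota_\th\tau$, and $\ol\t\circ\iota_\th=\Id$; commutation with $T_k$ follows because each of $\iota_\th$, $e$, $\ol\t$ commutes with $T_k$. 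Finally, $\ol\t$--compatibility of $e$ means $e\circ\iota_\th\circ\ol\t=\iota_\th\circ\ol\t\circ e$, whence $e_\C\circ\ol\t=\ol\t\circ e\circ\iota_\th\circ\ol\t=\ol\t\circ\iota_\th\circ\ol\t\circ e=\ol\t\circ e$.

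For (ii), I would first treat quasihomogeneous symbols. Fix a $\ol\t$--compatible extension map $e$ (it exists by Lemma \ref{lem:extensionmap}(ii), which even produces an $(\A_\th,\ol\t)$--compatible one). Given $\sg\in QS^m_{\A_\th}(\Z^n)$, Proposition \ref{prop:quasihom} gives $e(\sg)\in QS^m_{\A_\th}(\R^n)$, so $e(\sg)\sim\eta$ for some $\eta\in HS^m_{\A_\th}(\R^n)$. Since $\ol\t$ acts pointwise and commutes with dilations, $\ol\t(\eta)\in HS^m_\C(\R^n)$, while $\ol\t(\eta)\sim\ol\t(e(\sg))$ because $\ol\t$ preserves orders. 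By part (i), $\ol\t(e(\sg))=e_\C(\ol\t(\sg))$, hence $e_\C(\ol\t(\sg))\in QS^m_\C(\R^n)$, and Proposition \ref{prop:quasihom} applied with $\B=\C$ yields $\ol\t(\sg)\in QS^m_\C(\Z^n)$. If now $\sg\in CS^m_{\A_\th}(\Z^n)$ has quasihomogeneous resolution $(\sg_{[m-j]})_j$, then $\ol\t(\sg_{[m-j]})\in QS^{m-j}_\C(\Z^n)$ by the above, and for every $N$ one has $\ol\t(\sg)-\sum_{j=0}^N\ol\t(\sg_{[m-j]})=\ol\t\bigl(\sg-\sum_{j=0}^N\sg_{[m-j]}\bigr)\in S^{\Real(m)-N-1}_\C(\Z^n)$; thus $(\ol\t(\sg_{[m-j]}))_j$ is a quasihomogeneous resolution of $\ol\t(\sg)$ and $\ol\t(\sg)\in CS^m_\C(\Z^n)$. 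The smooth case is the same argument carried out directly on $S_{\A_\th}(\R^n)$, bypassing extension maps, using only that $\ol\t$ on smooth symbols preserves homogeneity and orders.

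Part (iii) is then immediate: the computation just performed shows that $(\ol\t(\sg_{[m-j]}))_j$ is a positively quasihomogeneous resolution of $\ol\t(\sg)$, so by the uniqueness of such resolutions modulo smoothing terms (Lemma \ref{techhom}(ii) in the discrete case, and the corresponding statement recalled before Definition \ref{defclass} in the smooth case) the $\sim$--class of $\ol\t(\sg_{[m-j]})$ equals $[\ol\t(\sg)]_{m-j}$, which is exactly the assertion $\ol\t(\sg_{[m-j]})\sim(\ol\t(\sg))_{[m-j]}$. I do not expect a genuine obstacle here: the lemma is essentially diagram chasing, and the closest thing to a delicate point is making sure in (ii) to pick a $\ol\t$--compatible extension map so that part (i) applies, together with the small observation that $\ol\t$, being the $U_0$--coefficient, commutes with dilations and hence carries $HS^m_{\A_\th}$ into $HS^m_\C$ with the same degree.
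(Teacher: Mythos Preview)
Your proposal is correct and follows essentially the same approach as the paper. The paper is somewhat terser --- it dismisses (i) as ``straightforward from the definition'' and in (ii) proves the smooth case first (noting that $\ol\t$ preserves quasihomogeneity by linearity of $\t$) before deducing the discrete case from (i), whereas you reverse this order and handle the discrete case first via an extension map; but the underlying argument is identical, resting on the same pointwise nature of $\ol\t$ and the uniqueness from Lemma~\ref{techhom}(ii).
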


\begin{proof}
 $(i)$ We first check the case of smooth classical symbols. Let $\sg\in CS_{\A_\th}^m(\R^n)$. Let $(\sg_{[m-j]})_j$ be a sequence such that $\sg\sim \sum_j \sg_{[m-j]}$ and $\sg_{[m-j]}\in QS^{m-j}_{\A_\th}(\R^n)$. We have for all $j\in \N$,  $\ol \t(\sg)-\ol \t(\sum_{i=0}^j \sg_{[m-i]}) \in S_\C^{\Real(m)-j-1}(\R^n)$. Thus, it is enough to check that $\ol \t$ maps $QS^m_{\A_\th}(\R^n)$ into $QS^m_{\C}(\R^n)$. This follows from the linearity of $\t$. The case of discrete symbols follows from $(i)$ and from the case of smooth symbols.
 
 \medskip
 
 $(ii)$ This follows directly from $(i)$.
 \end{proof}

\begin{thm}
 \label{thm:asympt} 
 (i) If $\sg,\tau\in S_{\A_\th}(\Z^n)$, then for any extension map $e$
 \[
  \sg\circ_\th\tau \sim \sum_{\a\in \N^n} \tfrac{1}{\a !} (\del_\xi^\a e(\sg))_{|\Z^n} \, \bar\delta^{\a}\tau,
 \]
 and
 \[
 e(\sg\circ_\th \tau) \sim \sum_{\a\in \N^n} \tfrac{1}{\a !} (\del_\xi^\a e(\sg)) \, \bar\delta^{\a} e(\tau) \, .
 \]
 
 (ii) Let $\sg$ be a symbol in $CS^m_{\A_\th}(\Z^n)$ and $\tau\in CS_{\A_\th}^{m'}(\Z^n)$, and let $(e(\sg)_{[m-j]})_j$, 
 $(e(\tau)_{[m'-j]})_j$ be positively homogeneous resolutions of respectively $e(\sg)$ and $e(\tau)$, where $e$ is an extension map. 
 Then 
 \[
 e(\sg\circ_\th \tau) \sim \sum_j (\sg\circ_\th\tau)^e_{[m+m'-j]}
 \] 
 where
 \[
 (\sg\circ_\th\tau)^e_{[m+m'-j]} := \sum_{|\a|+i+i'=j} \tfrac{1}{\a!} (\del_\xi^\a e(\sg)_{[m-i]})\, \ol\delta^\a e(\tau)_{[m'-i']} \,   
 \]
 belongs to $HS^{m+m'-j}_{\A_\th}(\R^n)$. In particular, the star--product $\circ_\theta$ of toroidal symbols maps $CS_{\A_\th}^{m}(\Z^n)\times CS_{\A_\th}^{m'}(\Z^n)$ into $CS_{\A_\th}^{m+m'}(\Z^n)$. Thus, the set $CS_{\A_\th}(\Z^n)=\cup_{m\in \C} CS_{\A_\th}^m(\Z^n)$ is a monoid under the star--product $\circ_\th$.
 Note that this is not an algebra, as it is not stable under addition.
\end{thm}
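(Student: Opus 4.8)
The plan is to prove (i) by substituting the Taylor expansion of an extension $e(\sigma)$ into the star--product formula \eqref{starproduct} and controlling the resulting remainder by means of the rapid Fourier decay established in Lemma \ref{lem:fouriercoeff}, and then to deduce (ii) by purely formal manipulations of asymptotic expansions, combining (i) with Lemma \ref{lem:extensionmap}, Lemma \ref{lem:classical} and the algebra property of symbols. Throughout, the key structural facts are that $\delta^\alpha U_l=l^\alpha U_l$, so $\sum_l\tau_l(k)\,l^\alpha U_l=\delta^\alpha(\tau(k))=(\bar\delta^\alpha\tau)(k)$, and that $\bar\delta^\alpha$ acts only on the $\A_\th$--values, hence commutes with the $\R^n$--dilation and preserves symbol order.

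For (i), I would fix $\sigma\in S^m_{\A_\th}(\Z^n)$, $\tau\in S^{m'}_{\A_\th}(\Z^n)$ and an extension map $e$, set $\wt\sigma:=e(\sigma)$, and for $k,l\in\Z^n$ apply Taylor's formula with integral remainder to $s\mapsto\wt\sigma(k+s)$ at $s=l$: $\wt\sigma(k+l)=\sum_{|\alpha|<N}\tfrac{l^\alpha}{\alpha!}(\del_\xi^\alpha\wt\sigma)(k)+R_N(k,l)$ with $R_N(k,l)=N\sum_{|\alpha|=N}\tfrac{l^\alpha}{\alpha!}\int_0^1(1-t)^{N-1}(\del_\xi^\alpha\wt\sigma)(k+tl)\,dt$. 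Since $\sigma(k+l)=\wt\sigma(k+l)$ and the scalars $\tau_l(k),l^\alpha$ commute with $(\del_\xi^\alpha\wt\sigma)(k)$, inserting this into $(\sigma\circ_\th\tau)(k)=\sum_l\tau_l(k)\,\sigma(l+k)\,U_l$ makes the polynomial part reproduce exactly $\sum_{|\alpha|<N}\tfrac1{\alpha!}(\del_\xi^\alpha\wt\sigma)_{|\Z^n}\,\bar\delta^\alpha\tau$, and it remains to show that $k\mapsto\sum_l\tau_l(k)R_N(k,l)U_l$ belongs to $S^{m+m'-N}_{\A_\th}(\Z^n)$. For that I would estimate a $p_\gamma$--seminorm of a single summand via $p_\gamma(a\,U_l)=\|\delta^\gamma(a\,U_l)\|\leq C(1+|l|)^{|\gamma|}\max_{\gamma'\leq\gamma}\|\delta^{\gamma'}(a)\|$, bound $\|\delta^{\gamma'}(R_N(k,l))\|\lesssim|l|^N\langle k+tl\rangle^{m-N}$ from the symbol estimates on $\bar\delta^{\gamma'}\wt\sigma$ together with Peetre's inequality ($\langle k+tl\rangle^{m-N}\lesssim\langle k\rangle^{m-N}\langle l\rangle^{|m-N|}$ for $t\in[0,1]$), use $|\Delta_k^\beta\tau_l(k)|\lesssim\langle k\rangle^{m'-|\beta|}\langle l\rangle^{-M}$ for $M$ arbitrarily large (Lemma \ref{lem:fouriercoeff}), handle the $\Delta_k^\beta$ by the discrete Leibniz rule \eqref{LeibnizDisc}, and sum the resulting $\langle l\rangle^{-M+N+|m-N|+|\gamma|}$ over $l\in\Z^n$; this is the same mechanism as in the proof of Theorem \ref{thmcomp}, with the Taylor order $N$ now in the role of the auxiliary parameter. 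Letting $N\to\infty$ gives the first identity. To obtain the second, I would apply $e$ to the first one term by term, using that $e$ is continuous on each $S^d_{\A_\th}$ hence respects asymptotic expansions, that $e(fg)\sim e(f)e(g)$ by Lemma \ref{lem:extensionmap}(iv), and that $e((\del_\xi^\alpha\wt\sigma)_{|\Z^n})\sim\del_\xi^\alpha\wt\sigma$ and $e(\bar\delta^\alpha\tau)\sim\bar\delta^\alpha e(\tau)$ because in each case both sides are extensions of the same discrete symbol, so Lemma \ref{lem:extensionmap}(iii) applies.

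For (ii), since $\sigma\in CS^m_{\A_\th}(\Z^n)$ and $\tau\in CS^{m'}_{\A_\th}(\Z^n)$, Lemma \ref{lem:classical} yields positively homogeneous resolutions $e(\sigma)\sim\sum_i e(\sigma)_{[m-i]}$ and $e(\tau)\sim\sum_{i'}e(\tau)_{[m'-i']}$. Differentiation lowers the homogeneity degree by $|\alpha|$ while $\bar\delta^\alpha$ preserves it, so $\del_\xi^\alpha e(\sigma)_{[m-i]}\in HS^{m-i-|\alpha|}_{\A_\th}(\R^n)$, $\bar\delta^\alpha e(\tau)_{[m'-i']}\in HS^{m'-i'}_{\A_\th}(\R^n)$, and their product lies in $HS^{m+m'-i-i'-|\alpha|}_{\A_\th}(\R^n)$. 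Substituting these into the second identity of (i), multiplying out the asymptotic series, and regrouping the triple sum by $j=|\alpha|+i+i'$ produces $e(\sigma\circ_\th\tau)\sim\sum_j(\sigma\circ_\th\tau)^e_{[m+m'-j]}$ with each $(\sigma\circ_\th\tau)^e_{[m+m'-j]}\in HS^{m+m'-j}_{\A_\th}(\R^n)$; the rearrangement is legitimate because truncating $\sum_\alpha$ at $|\alpha|<N$ (its tail has order $\leq\Real(m+m')-N$ by the first identity of (i)) and each homogeneous resolution at order $<N$ leaves a remainder in $S^{\Real(m+m')-N}_{\A_\th}(\R^n)$. Hence $e(\sigma\circ_\th\tau)\in CS^{m+m'}_{\A_\th}(\R^n)$, and Lemma \ref{lem:classical} gives $\sigma\circ_\th\tau\in CS^{m+m'}_{\A_\th}(\Z^n)$. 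Thus $CS_{\A_\th}(\Z^n)$ is closed under $\circ_\th$; associativity is inherited from composition of operators through the bijection $\Op_\th$ of Proposition \ref{propquantisation} (cf.\ Theorem \ref{thmcomp}), and the constant symbol $k\mapsto U_0=1\in CS^0_{\A_\th}(\Z^n)$ satisfies $\Op_\th(1)=\Id$ and is therefore a two--sided unit, so $(CS_{\A_\th}(\Z^n),\circ_\th)$ is a monoid; it is not an algebra, since the sum of two classical symbols whose orders differ by a non--integer admits no positively homogeneous resolution.

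The step I expect to be the main obstacle is the remainder estimate in (i): carrying out the interchange of the lattice sum over $l$ with the Taylor expansion uniformly in $k$ and robustly under arbitrary difference operators $\Delta_k^\beta$ and derivations $\delta^\gamma$. This is exactly where Lemma \ref{lem:fouriercoeff} and Peetre's inequality do the essential work, the rapid decay of the Fourier coefficients $\tau_l$ being what absorbs the polynomial growth in $l$ produced by the Taylor factors $l^\alpha$ and by $\delta^\gamma(\,\cdot\,U_l)$; once that is in place, the diagonal bookkeeping in (ii) is routine.
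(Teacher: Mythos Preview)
Your proposal is correct and follows essentially the same approach as the paper's proof: Taylor expansion of the extended symbol $e(\sigma)$ inserted into \eqref{starproduct}, remainder control via Lemma \ref{lem:fouriercoeff} and Peetre's inequality (the same mechanism as in Theorem \ref{thmcomp}), then passage to the second identity using Lemma \ref{lem:extensionmap} $(iii)$--$(iv)$. The only cosmetic difference is that the paper assumes without loss of generality that $e$ is $(\A_\th,\ol\t)$--compatible and invokes Remark \ref{rk:extension delta} to get $e(\bar\delta^\alpha\tau)=\bar\delta^\alpha e(\tau)$ exactly, whereas you obtain $e(\bar\delta^\alpha\tau)\sim\bar\delta^\alpha e(\tau)$ from Lemma \ref{lem:extensionmap} $(iii)$, which is equally sufficient; your treatment of $(ii)$ spells out what the paper dismisses as ``follows directly from $(i)$''.
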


\begin{proof}
$(i)$ Without loss of generality we can assume that the extension map $e$ is an $(\A_\th,\ol \t)$--compatible extension map. From Theorem \ref{thmcomp} and the fact that $T_l\sg= (T_l e(\sg))_{|\Z^n}$, a Taylor expansion of $T_le(\sg)$ allows to deduce that for all $N\in \N$,
\[
 \sg\circ_\th \tau = \sum_{l\in \Z^n}  \sum_{|\a|\leq N} \tfrac{l^\a}{\a!}(\del^\a_\xi e(\sg))_{|\Z^n}\tau_l U_l +R_{N,l}^\sg  
\]
where $R_{N,l}^\sg:= \left(\sum_{|\a|=N+1}\tfrac{N+1}{\a!} l^\a \int_0^1 (1-t)^N \del_\xi^\a e(\sg)(\cdot+tl)dt\, \tau_l U_l\right)_{|\Z^n}$.
The absolute summability of $(\wt R_l)_l$ in $S^{m+m'-N-1}_{\A_\th}(\R^n)$, where 
\[
\wt R_l:= \sum_{|\a|=N+1}\tfrac{N+1}{\a!} l^\a \int_0^1 (1-t)^N \del_\xi^\a e(\sg)(\cdot+tl)dt\, (e(\tau))_l U_l,
\]
follows from an application of Leibniz formula, Peetre's inequality and Lemma \ref{lem:fouriercoeff}. This implies that $\sg\circ_\th\tau \sim \sum_{\a\in \N^n} \tfrac{1}{\a !} (\del_\xi^\a e(\sg))_{|\Z^n} \, \bar\delta^{\a}\tau $. Applying now the extension map $e$ yields $e(\sg\circ_\th \tau) \sim \sum_{\a\in \N^n}e(\tfrac{1}{\a !} (\del_\xi^\a e(\sg))_{|\Z^n} \, \ol\delta^{\a}\tau)$. Since $e(\sg\tau) \sim e(\sg)e(\tau)$ (Lemma \ref{lem:extensionmap} $(iv)$) and $e(\ol \delta^\a \tau)= \ol\delta^\a e(\tau)$ (Remark \ref{rk:extension delta}), we get the result. 

$(ii)$ This follows directly from $(i)$.
\end{proof}

\begin{rk}
\label{rk:leadingsymbol}
From Theorem \ref{thm:asympt} it follows the multiplicativity of the leading symbol map, which corresponds to taking the homogeneous part of highest homogeneity degree in a positively homogeneous resolution of the symbol.
\end{rk}

As a direct consequence of Theorem \ref{thm:asympt}, we obtain:

\begin{cor}
\label{cor:bracketofsymbols}
 Let $\sg\in CS^m_{\A_\th}(\Z^n)$ and $\tau\in CS_{\A_\th}^{m'}(\Z^n)$ be two symbols, and let $(e(\sg)_{[m-j]})_j$, $(e(\tau)_{[m'-j]})_j$ be positively homogeneous resolutions of respectively $e(\sg)$ and $e(\tau)$, where $e$ is an extension map.
 Then the star--bracket $\set{\sg,\tau}_\th $ lies in $ CS^{m+m'}_{\A_\th}(\Z^n)$, and 
 \begin{align}
 \label{eqasympt}
 e(\set{\sg,\tau}_\th) \sim \sum_j \sum_{|\a|+i+i'=j} \frac{1}{\a!} \Big(&(\del_\xi^\a e(\sg)_{[m-i]}) \, \ol\delta^\a e(\tau)_{[m'-i']} \nonumber\\
 &-(\del_\xi^\a e(\tau)_{[m'-i']})\, \ol\delta^\a e(\sg)_{[m -i]}\Big)\, .
 \end{align} 
\end{cor}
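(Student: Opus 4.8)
The plan is to derive Corollary~\ref{cor:bracketofsymbols} as a direct application of Theorem~\ref{thm:asympt}. First I would note that $CS_{\A_\th}^{m}(\Z^n)$ is closed under addition when the orders agree, and since $\sg\circ_\th\tau$ and $\tau\circ_\th\sg$ both lie in $CS_{\A_\th}^{m+m'}(\Z^n)$ by Theorem~\ref{thm:asympt}~(ii), their difference $\set{\sg,\tau}_\th=\sg\circ_\th\tau-\tau\circ_\th\sg$ is again a classical symbol of order $m+m'$. This settles the first assertion immediately.

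For the asymptotic expansion, I would apply Theorem~\ref{thm:asympt}~(ii) twice: once to $\sg\circ_\th\tau$, giving
\[
e(\sg\circ_\th\tau)\sim\sum_j(\sg\circ_\th\tau)^e_{[m+m'-j]},\qquad (\sg\circ_\th\tau)^e_{[m+m'-j]}=\sum_{|\a|+i+i'=j}\tfrac{1}{\a!}(\del_\xi^\a e(\sg)_{[m-i]})\,\ol\delta^\a e(\tau)_{[m'-i']},
\]
and once to $\tau\circ_\th\sg$, with the roles of $\sg$ and $\tau$ interchanged. Since $e$ is linear, $e(\set{\sg,\tau}_\th)=e(\sg\circ_\th\tau)-e(\tau\circ_\th\sg)$, and subtracting the two asymptotic series term by term (legitimate because $\sim$ is compatible with finite linear combinations, each partial sum differing from the full symbol by something in a fixed $S^{\Real(m)+\Real(m')-N-1}_{\A_\th}(\R^n)$) yields exactly \eqref{eqasympt}. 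One should be slightly careful that the homogeneous resolutions of $e(\tau\circ_\th\sg)$ used here are those built out of the \emph{same} fixed resolutions $(e(\sg)_{[m-i]})_i$ and $(e(\tau)_{[m'-i']})_{i'}$ via Theorem~\ref{thm:asympt}~(ii); by the uniqueness of quasihomogeneous parts (Lemma~\ref{techhom}~(ii)) any other choice differs only by smoothing terms, so the stated $\sim$ is unaffected.

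There is essentially no hard step here; the only mild point to check is that the interchange of $\sg$ and $\tau$ in Theorem~\ref{thm:asympt}~(ii) produces precisely the second group of terms $-(\del_\xi^\a e(\tau)_{[m'-i']})\,\ol\delta^\a e(\sg)_{[m-i]}$ with the indices summed over $|\a|+i+i'=j$, which is immediate from the symmetry of the index set $\set{(\a,i,i'):|\a|+i+i'=j}$ under swapping $i\leftrightarrow i'$. Hence the corollary is just the difference of two instances of the composition formula, reorganised into a single double sum.
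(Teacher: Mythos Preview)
Your proposal is correct and matches the paper's approach exactly: the paper presents the corollary as a direct consequence of Theorem~\ref{thm:asympt} with no further proof, and your argument---applying part~(ii) to $\sg\circ_\th\tau$ and to $\tau\circ_\th\sg$ and subtracting---is precisely the intended derivation. The additional care you take with the uniqueness of quasihomogeneous parts and the symmetry of the index set is sound but not strictly necessary at the level of detail the paper adopts.
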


\begin{rk}
\label{rk:order star bracket}
Note that in contrast with scalar valued symbols $\sigma$ in $ CS^{m }_{\C}(\Z^n)$ and $\tau$ in $ CS^{m' }_{\C}(\Z^n)$ for which the star--bracket $\set{\sg,\tau}$ lies in $CS^{m+m'-1}_\C(\Z^n)$, in the noncommutative setup one should expect $\set{\sg,\tau}_\th $ not to lie in the space $ CS^{m+m'-1}_{\A_\th}(\Z^n)$ for $\sg\in CS^m_{\A_\th}(\Z^n)$ and $\tau\in CS_{\A_\th}^{m'}(\Z^n)$.
\end{rk}

\begin{defn}
 The space of all \emph{classical pseudodifferential operators of order $m$ on $\T^n$} is the set $\Cl_\theta^m(\T^n):= {\Op}_\theta(CS_{\A_\th}^m(\Z^n))$,\index{$\Cl_\theta^m(\T^n)$} and we define $\Cl_\th^{\Z}(\T^n)$\index{$\Cl_\th^{\Z}(\T^n)$} as well as $\Cl_\th^{\notin \Z}(\T^n)=\Cl_\th(\T^n)\backslash \Cl_\th^{\Z}(\T^n)$\index{$\Cl_\th^{\notin \Z}(\T^n)$} similarly.
\end{defn}

\section{Traces and translation invariant linear forms}
\label{sectraces}

As in the previous section we use $\B$ to denote either $\A_\th$ or $\C$, and unless otherwise specified, $m$ denotes a complex number.

\subsection{Linear forms on toroidal symbols}
\label{subsec:linear forms on symbols}

We call a functional $\lambda$ on a subset $S$ of a vector space $V$ into $\C$ a \emph{linear form} if for any $v_1,v_2\in S$ and $\alpha_1,\alpha_2\in \C$, $\lambda\left(\alpha_1 v_1+\alpha_2 v_2\right) = \alpha_1 \lambda(v_1)+\alpha_2 \lambda(v_2)$ whenever $\alpha_1 v_1+\alpha_2 v_2\in S$.

\begin{defn}
\label{exotic}
A linear form on a subset $S\subseteq S_{\B}(\Z^n)$ is \emph{exotic} (resp.\ \emph{singular}) if it vanishes on symbols whose order has real part $<-n$ (resp.\ on smoothing symbols). A similar definition holds with $\Z^n$ replaced by $\R^n$.\\
A linear form $\la$ on a subset $S\subset \Cl_\th(\T^n)$ is \emph{exotic} (resp.\ \emph{singular}) if the corresponding linear form $\la\circ \Op_\th$ on $\Op_\th^{-1}(S)$ is exotic (resp.\ singular), or equivalently if $\lambda$ vanishes on operators in $S$ whose order has real part $<-n$ (resp.\ on smoothing operators).  
\end{defn}

\begin{rk} 
 Note that symbols (resp.\ operators)  whose order has real part $<-n$, are in $\ell^1(\Z^n,\A_\th)$ (resp.\ trace--class on $\H$ by Proposition \ref{thm:Sobolev} $(ii)$), so that an exotic trace vanishes on $\ell^1(\Z^n,\A_\th)$--symbols (resp.\ trace--class operators). 
\end{rk}

\begin{rk}
 The terminology ``exotic" is borrowed from \cite{Sc}, whereas the terminology ``singular" is widespread in the literature on pseudodifferential operators. Also, Wodzicki used the term exotic for determinants defined by means of a residue (see \cite{K}, \cite{W1})  which with our terminology, is an exotic trace. Clearly, exotic linear forms are singular but a singular trace need not be exotic, as we shall see later (Remark \ref{rk:leading symbol singular not exotic}) with leading symbol traces on certain trace--class operators; see also \cite{AGPS} where the existence of a trace which vanishes on finite rank operators but not on all trace--class operators is shown.
\end{rk}

\begin{defn}
\label{def:trace}
 Let $T,S$ be subsets of an algebra. A linear form on $S$ is called a $T$--\emph{trace} on $S$ if it vanishes on commutators of the form $[A,B]:=AB-BA$ where $A,B\in T$ and $[A,B]\in S$. If $S=T$ the linear form is called a \emph{trace} on $T$. 
\end{defn}

Unless otherwise specified, a \emph{trace} on classical symbols is understood in the sense of a $CS_{\A_\th}(\R^n)$--, resp.\ $CS_{\A_\th}(\Z^n)$--trace (here the commutator is defined with $\circ_\th$), and similarly for traces on classical pseudodifferential operators. We shall in particular consider $CS_{\A_\th}(\R^n)$--, resp.\ $CS_{\A_\th}(\Z^n)$--traces on $CS_{\A_\th}^m(\Z^n)$ (resp.\ $CS_{\A_\th}^m(\R^n)$) for a fixed complex order $m$.
  
\begin{defn}
\label{ell 1-continuous}
An $\ell^1$--\emph{continuous} linear form on a subset $S\subseteq CS_{\B}(\Z^n)$ (resp.\ $S\subseteq CS_{\B}(\R^n)$) is a linear form such that $\la_{|S\cap CS_{\B}^m(\Z^n)}$ (resp.\ $\la_{|S\cap CS_{\B}^m(\R^n)}$) is continuous for the $\ell^1(\Z^n,\B)$ (resp.\ $\ell^1(\R^n,\B)$) topology whenever $\Real(m)<-n$. 

We say that a linear form on a subset $S\subseteq \Cl_{\th}(\T^n)$ is $\mathcal{L}^1$--\emph{continuous} if $\la \circ \Op_\th$ is  $\ell^1$--continuous on $\Op_\th^{-1}(S)$.
\end{defn}

Let $\la$ be a linear form on $CS_{\B}(\Z^n)$ and let $s$ be a complex number. The map $\sg\mapsto \la(\sg_{[s]})$ which assigns the value $0$ to smoothing symbols, and assigns the value $\la(\sg_{[s]})$ to non--smoothing symbols $\sg$, where $\sg_{[s]}$ is an element of $[\sg]_{s}$, is well defined since $\la(\sg_{[s]})$ does not depend on the choice of $\sg_{[s]}$ in $[\sg]_s$. 
It is a singular linear form by construction and it is exotic whenever $\Real(s)\geq-n$.\\

Let us recall some useful linear forms on sets of classical symbols on $\R^n$ (see e.g.\ \cite{P1,P2}). We set $\Z_n:=\Z\cap [-n,+\infty[$ and denote by $CS_\C^{\notin \Z_n}(\R^n)$ the set of all classical symbols whose order lies in $\C\backslash \Z_n$.

\begin{defn}
 $(i)$ The \emph{noncommutative residue} $\res$ on $CS_\C(\R^n)$, defined as 
 \begin{equation}
 \label{eq:ncr}
 \res(\sg):=\int_{\Sbb^{n-1}} \sg_{[-n]} (\xi) \, dS(\xi)\, , \index{$\res$}
 \end{equation}
 where $dS$ is the volume form on ${\Sbb^{n-1}}$ induced by the canonical volume form on $\R^n$, is an exotic and hence singular linear form.\\

 $(ii)$ The \emph{cut--off integral} on $CS_\C(\R^n)$, defined as the linear form
 \begin{equation}
 \label{eq:cutoff}
 \cutoffint^{\cutoff}_{\R^n}\sg:=\underset{R\to \infty}{\fp} \int_{B(0,R)} \sg\, , \index{$\cutoffint^{\cutoff}_{\R^n}$}
 \end{equation} 
 coincides with the usual Lebesgue integral on $CS_\C^{<-n}(\R^n)$ and is $\ell^1$--continuous. Here $\underset{R\to \infty}{\fp} f$ stands for a (Hadamard) finite part of the expansion of $f$ as $R\to \infty$.\footnote{More precisely, if $f(R)\sim_{R\to \infty} \sum_{j=0}^\infty a_{j} R^{\alpha-j}$ for some complex numbers $\alpha$ and $a_{j}$ with $j$ in $\Z_{\geq 0}$, by which we mean $f(R)-\sum_{j=0}^{N-1} a_{j} R^{\alpha-j}= O\left(R^{{\rm Re}(\alpha)-N+\e}\right)$ for any positive $\e$ and any positive integer $N$, then $\underset{R\to \infty}{\fp}f$ is given by $ a_\alpha$ if $\alpha\in \Z_{\geq 0}$ and it vanishes otherwise.} \\ 
 When restricted to the set $CS_\C^{\notin \Z_n}(\R^n)$, the cut--off integral $\cutoffint^{\cutoff}_{\R^n}$ does not depend on a rescaling  $R\to t\, R$ for any positive $t$ \cite[Exercise 3.22]{P2}, and is called the \emph{canonical integral}, denoted by $\cutoffint_{\R^n}$\index{$\cutoffint_{\R^n}$}.\\

 $(iii)$ The \emph{cut--off discrete sum} on $CS_\C(\R^n)$, defined in \cite[Definition 5.26]{P2} as the finite part of the discrete sum on integer points of an expanded polytope $N\Delta$:
 \begin{equation}
 \label{eq:cutoffsum} 
 {\textstyle\cutoffsum_{\Z^n}^{\mathrm{cut-off}}}\sg :=\underset{N\to \infty}{\fp}\sum_{N\Delta\cap  \Z^n}\sg\, , \index{${\textstyle\cutoffsum_{\Z^n}^{\mathrm{cut-off}}}$}
 \end{equation}
 coincides with the usual discrete sum $\sum_{\Z^n}$ on $CS_\C^{<-n}(\R^n)$.\\
 When restricted to the set $CS_\C^{\notin \Z_n}(\R^n)$, the cut--off discrete sum  $\cutoffsum^{\cutoff}_{\Z^n}$ does not depend on a rescaling $N\to t\, N$ for any positive $t$, nor does it depend on the choice of polytope $\Delta$ \cite[Theorem 5.28]{P2}, and is called the \emph{canonical discrete sum}, denoted by $\cutoffsum_{\Z^n}$.\index{$\cutoffsum_{\Z^n}$}\\

 $(iv)$ A \emph{leading symbol form} on classical symbols on $\R^n$ of order $m\in \C$ is a map
 \begin{align*}
  CS_{\C}^m(\R^n)&\longrightarrow\C \\
  \sigma\sim \sum_j \sigma_{[m-j]}&\longmapsto L(\sigma_{[m]}),  
 \end{align*}
 where $L:QS^m_\C(\R^n)\to\C$ is a linear map and $(\sigma_{[m-j]})_{j\in \N}$ is any positively quasihomogeneous resolution of $\sigma$ (see \cite{PR}).
\end{defn}

\begin{rk}
  A leading symbol form on $ CS_{\C}^m(\R^n)$ induces one on $CS_{\C}^m(\Z^n)$ as follows: Given a linear map  $L:QS^m_\C(\R^n)\to\C$, the linear form on $CS^m_\C(\Z^n)$ defined by 
 \[
  L\circ\, e  \circ (\cdot)_{[m]} = L\circ (\cdot)_{[m]} \circ e
 \]
 (see Proposition \ref{prop:classical}) is a singular linear form on $CS^m_\C(\Z^n)$ independent of the choice of the extension map $e$.
\end{rk}

\subsection{From traces to translation invariant linear forms}
\label{subsectransinv}

In this paragraph we relate $\Z^n$--translation invariant linear forms on symbols with linear forms that vanish on star--brackets. In the following, $m$ is an arbitrary complex number.

Given a linear form $\lambda$ on $CS_{\A_\th}^m(\Z^n)$ and $l\in \Z^n$ we set $T_l^*\lambda(\sigma) =\lambda(T_l\sigma)$, where as before, $T_l$ denotes the translation on symbols $\sigma\mapsto \sigma(\cdot+l)$. Since $CS_{\A_\th}^m(\Z^n)$ is stable under $T_l$ (see Remark \ref{rk:transl}), $T_l^* \la$ is a linear form on $CS_{\A_\th}^m(\Z^n)$.

\begin{defn}
\label{def:trans-closed}
A linear form $\lambda: CS_{\A_\th}^m(\Z^n)\to \C$ is \emph{closed} if $\lambda\circ \bar \delta_j=0$ for all $j\in\{1,\cdots, n\}$, where $\bar \delta_j$ is the map defined in \eqref{bardelta}.

A linear form $\lambda: CS_{\A_\th}^m(\Z^n)\to \C$ is \emph{$\Z^n$--translation invariant} if it satisfies one of the following two equivalent conditions:
\begin{enumerate}
\item $T_l^*\lambda=\lambda$  for all $l\in \Z^n$,
\item $\lambda\circ \Delta_j=0$ for all $j\in \{1,\cdots, n\}$, where $\Delta_j$ is the forward difference operator introduced in \eqref{bigdeltaj}.
\end{enumerate}
\end{defn}

\begin{rk} 
The implication $1. \Rightarrow 2.$ follows from setting $l:=e_j$. The implication $2. \Rightarrow 1.$ follows from setting $l=\sum_{j=1}^n l_je_j$ and using induction on $\vert l\vert=\sum_{j=1}^n l_j$. 
\end{rk} 

Part $(iii)$ of the following lemma shows that traces on noncommutative toroidal symbols are $\Z^n$--translation invariant and closed. Part $(i)$, which is inspired from an analogue statement on the two dimensional noncommutative torus proved in \cite{FW}, yields a noncommutative counterpart for a factorisation through the fibre for traces on pseudodifferential operators on closed manifolds.

\begin{lem}
\label{lem:translationinv}
$(i)$ Let $\lambda$ be a closed linear form on $CS_{\A_\th}^m(\Z^n)$. Then, $\lambda$ factorises in a unique way through $\bar\t$. In other words, there is a unique linear form 
on $\iota_\th^{-1}(CS_{\A_\th}^m(\Z^n))= CS_\C^{m}(\Z^n)$
 \[
 \bar\lambda:=\lambda\circ \iota_\theta:  CS_\C^{m}(\Z^n) \to \C, \quad \text{such that}\quad \lambda=\bar\lambda\circ \bar\t\, .
 \]
$(ii)$ Let $\lambda$ be a closed linear form on $CS_{\A_\th}^m(\Z^n)$. Then for any $\sigma\in CS_{\A_\th}^m(\Z^n)$ and $k\in \Z^n$,
\[
\lambda(\left(T_{k} -I\right) (\sg)) = \lambda\left(\set {\sg\circ_\th U_{-k},U_k}_\th\right)  -\lambda\left( \{\{\sigma, U_k\}_\theta,U_{-k}\}_\th \right).
\]  
$(iii)$ Let $\lambda$ be a $CS_{\A_\th}(\Z^n)$--trace on $CS_{\A_\th}^m(\Z^n)$, i.e. 
\[
 \lambda \left(\{\sigma, \tau\}_\theta\right)=0\quad \text{ for all }\sigma,\tau \in CS_{\A_\th}(\Z^n)\text{ such that } \{\sigma, \tau\}_\theta\in CS_{\A_\th}^m(\Z^n)\, .
\]
Then $\lambda$ is closed and $\Z^n$--translation invariant.
\end{lem}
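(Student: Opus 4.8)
The plan is to prove the three parts of Lemma \ref{lem:translationinv} in the order $(i)$, $(ii)$, $(iii)$, since each uses the previous one. For part $(i)$, the claim is that any closed linear form $\lambda$ factors through $\bar\t$. The key observation is that $\A_\th$ is generated (as a topological vector space, after taking the Weyl basis) by $U_0$ together with the images $\delta_j(\A_\th)$: concretely, for $k\neq 0$ one has $U_k = \delta_j\big(\tfrac{1}{k_j}U_k\big)$ for any index $j$ with $k_j\neq 0$. Lifting this to symbols, any $\sg\in CS^m_{\A_\th}(\Z^n)$ can be written (modulo the image of $\bar\t$, i.e.\ modulo $\iota_\th\circ\bar\t(\sg)$) as a sum $\sum_j \bar\delta_j(\tau_j)$ for suitable $\tau_j\in CS^m_{\A_\th}(\Z^n)$. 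Here one must be a little careful that the coefficients $\tfrac{1}{k_j}$ do not spoil the symbol estimates — the right move is to use a smooth partition of the sphere $\Sbb^{n-1}$ (or just the cones $\{|k_j|\geq |k|/\sqrt n\}$) so that $1/k_j$ is, on each piece, a symbol of order $-1$ times a bounded smooth function, hence $\tfrac{1}{k_j}\sg(k)$ (on that cone) is again a classical symbol of the same order class. Since $\lambda\circ\bar\delta_j=0$, applying $\lambda$ kills all the $\bar\delta_j$-terms, leaving $\lambda(\sg)=\lambda(\iota_\th\bar\t(\sg))=(\lambda\circ\iota_\th)(\bar\t(\sg))$. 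Uniqueness of $\bar\lambda$ is immediate because $\bar\t$ is surjective onto $CS^m_\C(\Z^n)$ (it has the right inverse $\iota_\th$ by Definition \ref{defn:bar t}), so $\bar\lambda$ is forced to equal $\lambda\circ\iota_\th$.

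For part $(ii)$, the strategy is a direct algebraic manipulation using the two identities of the Example after \eqref{bardelta}. From \eqref{bracketUj} applied to $\sg\circ_\th U_{-k}$ in place of $\sg$ — or more directly, one expands both star-brackets on the right using \eqref{starproduct} and the Weyl relation \eqref{eq:UkUl}. The idea is that $\{\sg\circ_\th U_{-k}, U_k\}_\th$ produces a ``finite-difference in $k$'' term of the form $(T_k - I)(\sg)$ (coming from the $\Delta$-type behaviour of composing with $U_{\pm k}$, cf.\ \eqref{bracketUj}) plus commutator-of-Weyl-elements terms, and the second bracket $\{\{\sg,U_k\}_\th, U_{-k}\}_\th$ is designed precisely to cancel those extra $[U_l,U_k]$-type terms. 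Since $\lambda$ is closed, if any residual $\bar\delta_j$-terms appear they are annihilated; but I expect the identity to hold at the level of symbols modulo terms in $\Ker\lambda$ coming from closedness, so I would first establish the symbol identity
\[
(T_k - I)(\sg) \;=\; \{\sg\circ_\th U_{-k},U_k\}_\th \;-\; \{\{\sg,U_k\}_\th,U_{-k}\}_\th \;+\; \sum_j \bar\delta_j(\mu_j)
\]
for suitable $\mu_j$, and then apply $\lambda$. Verifying this identity is a bookkeeping exercise with \eqref{starproduct}, \eqref{eq:UkUl}, and the expansion $U_l U_k = c(l,k) U_{l+k}$; the main care is tracking the phase factors $c(\cdot,\cdot)$.

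For part $(iii)$: closedness follows from \eqref{bracketkj}, which says $\bar\delta_j\sg = \{\sg, k_j U_0\}_\th$; since $k\mapsto k_j U_0 \in CS^1_{\A_\th}(\Z^n)$ (it is classical, indeed homogeneous of degree $1$), and $\bar\delta_j\sg\in CS^m_{\A_\th}(\Z^n)$ by Remark \ref{rk:transl}, the star-bracket lands in $CS^m_{\A_\th}(\Z^n)$, so the trace property gives $\lambda(\bar\delta_j\sg)=0$ for every $j$. Hence $\lambda$ is closed, and part $(ii)$ now applies: both star-brackets on its right-hand side are of the form $\{\alpha,\beta\}_\th$ with $\alpha,\beta\in CS_{\A_\th}(\Z^n)$ — one needs that $\sg\circ_\th U_{-k}$, $\{\sg,U_k\}_\th$, $U_{\pm k}$ are all classical (Theorem \ref{thm:asympt}, Corollary \ref{cor:bracketofsymbols}) and that the brackets land in $CS^m_{\A_\th}(\Z^n)$ (which holds since $(T_k-I)(\sg)\in CS^m_{\A_\th}(\Z^n)$ by Remark \ref{rk:transl}, and the two summands differ from it only by each other). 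Therefore the trace property forces $\lambda((T_k-I)(\sg))=0$, i.e.\ $T_k^*\lambda = \lambda$, which is $\Z^n$--translation invariance. The main obstacle I anticipate is part $(i)$: making the ``division by $k_j$'' argument rigorous at the level of classical symbol estimates (controlling the behaviour near the hyperplanes $k_j=0$ and near $k=0$, and checking the quasihomogeneous resolution survives) requires the partition-of-cones device and a small amount of care, whereas parts $(ii)$ and $(iii)$ are essentially formal once the relevant identities are in hand.
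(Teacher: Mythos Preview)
Your overall architecture is right, and your sketches for $(ii)$ and $(iii)$ essentially match the paper. The gap is in part $(i)$, where you have conflated the two distinct roles of $\Z^n$. A symbol $\sg \in CS^m_{\A_\th}(\Z^n)$ is a map $k \mapsto \sg(k)$ from the \emph{symbol variable} $k \in \Z^n$ into $\A_\th$, and each value decomposes as $\sg(k) = \sum_{l} \sg_l(k) U_l$ over the \emph{Weyl index} $l \in \Z^n$. The derivation $\bar\delta_j$ acts purely on the $\A_\th$-side (multiplying the $l$-th coefficient by $l_j$) and does nothing to the symbol variable. So in $U_l = \delta_j(\tfrac{1}{l_j}U_l)$ the scalar $\tfrac{1}{l_j}$ is the reciprocal of a nonzero integer, hence bounded by $1$ in absolute value; it is not a function of the symbol variable, there is no ``symbol of order $-1$'' issue, and no cone partition is relevant. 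Your phrase ``$\tfrac{1}{k_j}\sg(k)$ is again a classical symbol of the same order class'' mixes the two variables and does not describe any step that is actually needed.

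What does require work in $(i)$ --- and what your proposal does not touch --- is showing that the candidates $\tau^{(j)}(k) := \sum_{l \in A_j} \tfrac{\sg_l(k)}{l_j} U_l$ (for a partition $\{A_j\}$ of $\Z^n\setminus\{0\}$ with $l_j\neq 0$ on $A_j$) are \emph{classical}, not merely in $S^{\Real(m)}_{\A_\th}(\Z^n)$. The paper does this by using the rapid decay of $\sg_l$ in $l$ (Lemma~\ref{lem:fouriercoeff}) to obtain absolute summability in the Fr\'echet space $S^{\Real(m)}_{\A_\th}$, pushing through a continuous $(\A_\th,\bar\t)$-compatible extension map, and then invoking the closedness of $HS^{m-i}_{\A_\th}(\R^n)$ in $S^{\Real(m)-i}_{\A_\th}(\R^n)$ (Lemma~\ref{closedness}) so that the termwise homogeneous resolution of $e(\sg)$ survives the summation. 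For $(ii)$, your expected identity modulo $\sum_j \bar\delta_j(\mu_j)$ is correct: expanding via $\sg\circ_\th U_k = (T_k\sg)U_k$ and $U_k\circ_\th\sg = \sg U_k + \sum_l \sg_l[U_k,U_l]$, the remainder is a sum of pointwise $\A_\th$-commutators, hence has $\bar\t = 0$ and is killed by $\lambda$ through $(i)$; the paper writes out the intermediate steps explicitly rather than appealing back to $(i)$. Your $(iii)$ is exactly the paper's.
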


\begin{proof} 
$(i)$ For any $\sg\in CS_{\A_\th}^m(\Z^n)$, $k\in \Z^n$,
\[
\sg(k) = \sg_0(k) U_0 + \sum_{l\in \Z^n\backslash \set{0}} \dfrac{\sg_l(k)}{p_l} \delta_{I_l}(U_l)
\]
where for all $l\neq 0$, $I_l:=\set{j\in \set{1,\cdots,n} : l_j\neq 0}\neq \varnothing$, $\delta_{I_l}:=\prod_{j\in I_l} \delta_j$, and $p_l:=\prod_{j\in I_l} l_j \neq 0$.
We deduce from this the following equality
\begin{equation}
\label{symbdecomp}
\sg(k) - \sg_0(k) U_0 = \sum_{j=1}^n \delta_j\left(\sum_{l\in A_j} \dfrac{\sg_l(k)}{p_l} \delta_{I_{l}\backslash\set{j}}(U_l)\right)
\end{equation}
where $A_j:=\set{l\in \Z^n\backslash\set{0} : 1,\cdots ,j-1 \notin I_l \text{ and } j\in I_l }$. 
Define for all $j\in \set{1,\cdots,n}$,
\[
\tau^{(j)}(k):= \sum_{l\in A_j} \dfrac{\sg_l(k)}{p_l} \delta_{I_{l}\backslash\set{j}}(U_l) =: \sum_{l\in A_j} \tau^{(j,l)}(k)\, .
\]
We claim that $\tau^{(j)}$ is a classical symbol. 
Note that for all $(j,l)$ such that $j\in \set{1,\cdots,n}$, $l\in A_j$, and all $\a\in \N^n$, $\norm{(p_l)^{-1} \delta^\a\delta_{I_l\backslash \set{j}}(U_l)}\leq \langle l\rangle^{|\a|}$. It follows from Lemma \ref{lem:fouriercoeff} that for all $\a,\b \in \N^n$, and $N\in \N$, there is a constant $C$ such that
\[
p_{\a,\b}^{(\Real(m))}(\tau^{(j,l)}) \leq  C \langle l\rangle^{-N}\, .
\] 
In particular, the family $(\tau^{(j,l)})_{l\in A_j}$ is absolutely summable in $S^{\Real(m)}_{\A_\th}(\Z^n)$, and its sum $\tau^{(j)}\in S^{\Real(m)}_{\A_\th}(\Z^n)$.
Let $e$ be an $(\A_\th,\ol \t)$--compatible extension map. By continuity of $e$, it follows that 
\[
e(\tau^{(j)} )= \sum_{l\in A_j} e(\tau^{(j,l)})\, .
\]
Since $e$ is $(\A_\th,\ol \t)$--compatible, $e(\tau^{(j,l)})= e(\sg)_l (p_l)^{-1} \delta_{I_l\backslash \set{j}}(U_l)$. 
Moreover, since $\sg$ is a classical symbol, $e(\sg)\in CS^m_{\A_\th}(\R^n)$. Let $(\sg_{[m-i]})_i$ be a sequence of symbols such that $\sg_{[m-i]}\in HS^{m-i}_{\A_\th}(\R^n)$, 
and $e(\sg)\sim \sum_i \sg_{[m-i]}$. Fix $q\in \N$ and $l\in \Z^n$. 
We have $e(\sg)_l = \sum_{i=0}^q (\sg_{[m-i]})_l + R^{(q+1)}_l$, where $R^{(q+1)}\in S^{m-q-1}_{\A_\th}(\R^n)$. 
Thus, setting $\rho^{(i,j,l)}:=(\sg_{[m-i]})_l(p_l)^{-1} \delta_{I_l\backslash \set{j}}(U_l)$, we obtain
\[
e(\tau^{(j)}) = \sum_{i=0}^q \sum_{l\in A_j} \rho^{(i,j,l)}  +  \sum_{l\in A_j} R^{(q+1)}_l(p_l)^{-1} \delta_{I_l\backslash \set{j}}(U_l)\, .
\]
Using again Lemma $\ref{lem:fouriercoeff}$, we obtain the absolute summability of $(\rho^{(i,j,l)})_l$ in $S^{\Real(m)-i}_{\A_\th}(\R^n)$, and of $(R^{(q+1)}_l(p_l)^{-1} \delta_{I_l\backslash \set{j}}(U_l))_{l}$ in $S^{\Real(m)-q-1}_{\A_\th}(\R^n)$. 
By Lemma \ref{closedness}, $ \sum_{l\in A_j} \rho^{(i,j,l)}$ belongs to $HS^{m-i}_{\A_\th}(\R^n)$, and the claim follows. 
  
To conclude the proof, note that \eqref{symbdecomp} can now be reformulated as
\[
\sg - \iota_\th \circ \ol \t (\sg) = \sum_{j=1}^n \ol \delta_j (\tau^{(j)})\, .
\]
Applying now $\la$ on either side of this equality yields the result.

\medskip

$(ii)$ A computation shows that for any $k$ in $\Z^n$ and any $\sigma$ in $CS_{\A_\th}^m(\Z^n)$
\[
\sg \circ_\th U_k = (T_{k}\sg)\,  U_k\,\, \qquad \text{ and }\qquad U_k\circ_\th \sg = \sg U_k + \sum_{l\in \Z^n} \sg_l [U_k,U_l],
\]
from which it follows that
\begin{equation}
\label{1}
\{\sg,U_k\}_\th =  \left(T_k-I\right)(\sg )\, U_k + \sum_{l\in \Z^n} \sg_l \, [U_l,U_k].
\end{equation}

Applying $\la$ on either side of $\eqref{1}$ yields
\[
\la(\{\sg,U_k\}_\th) =  \la(\left(T_k-I\right)(\sg )\, U_k )+ \la\left(\sum_{l\in \Z^n} \sg_l \, [U_l,U_k]\right).
\]
By $(i)$, $\la$ factorizes through $\bar \t$, and by \eqref{eq:UkUl},
\[
\bar \t \left(\sum_{l\in \Z^n} \sg_l \, [U_l,U_k]\right) = \bar \t \left(\sum_{l\in \Z^n} \sg_{l-k} \, 2i \sin (\pi l\th k) U_{l}\right) = 0\, ,
\]
therefore we obtain for any $\tau\in CS_{\A_\th}^m(\Z^n)$ and $k\in \Z^n$,
\begin{equation}
\label{firststep}
\la(\left(T_k-I\right)(\tau )\, U_k )= \la(\{\tau,U_k\}_\th).
\end{equation}

If we multiply \eqref{1} by $U_{-k}$ and use the fact that $[U_k,U_{-k}]=0$, we get
\begin{equation}\label{2}
  \left(T_k-I\right)(\sg) = \set{\sg,U_k}_\th \, U_{-k} + \sum_{l \in \Z^n}\sg_l [U_k,U_l U_{-k}]\,.
\end{equation}
Moreover, a direct computation shows that
\[
\tau U_{-k} = \tau \circ_\th U_{-k} - \left((T_{-k}-I) \tau\right) U_{-k}\, 
\]
holds for any symbol in $CS_{\A_\th}^m(\Z^n)$. Applied to the symbol $\tau = \set{\sg, U_k}_\th$ this formula combined with \eqref{2} yields
\begin{align}
  \left(T_k-I\right)\sg&=
   \set{\sg,U_k}_\th \circ_\th  U_{-k} - \left(T_{-k}-I\right)(\set{\sg,U_k}_\th) \, U_{-k} + \sum_{l\in \Z^n} \sg_l [U_k,U_l U_{-k}]\nonumber\\
 &= \set {\sg\circ_\th U_{-k},U_k}_\th -\left(T_{-k}-I\right)(\set{\sg,U_k}_\th) U_{-k} + \sum_{l\in \Z^n} \sg_l [U_k,U_l U_{-k}] \label{Tmoinsk}
\end{align}
since $\set{U_{-k},U_k}_\th=0$.
 
Note that
\begin{equation} \label{barteq}
 \bar \t \left(\sum_{l\in \Z^n} \sg_l [U_k,U_l U_{-k}]\right) = \bar \t \left(\sum_{l\in \Z^n} \sg_{l} \,(e^{-2\pi i k\theta l}-1) U_{l}\right) = 0\, .
\end{equation}
Applying $\la$ on either side of $\eqref{Tmoinsk}$, and using \eqref{barteq} as well as the fact that $\la$ factorizes through $\bar \t$, we get
\begin{equation}
\label{secondstep}
\la(\left(T_k-I\right)(\sg)) = \la(\set {\sg\circ_\th U_{-k},U_k}_\th) -\la(\left(T_{-k}-I\right)( \set{\sg,U_k}_\th ) U_{-k})
\end{equation}

If we replace $k$ by $-k$ in \eqref{firststep} and then apply it to $\tau:=\{\sigma, U_k\}_\theta$, we obtain 
\[
\lambda\left(\left(T_{-k}-I\right)(\{\sigma, U_k\}_\theta)\,U_{-k}\right) =  \lambda\left( \{\{\sigma, U_k\}_\theta,U_{-k}\}_\th \right),
\] 
which, combined with \eqref{secondstep}, yields the desired equality.

\medskip

$(iii)$ By $(ii)$, it is enough to check that $\la$ is closed. But this follows directly from \eqref{bracketkj}.
\end{proof} 

The definition of $\Z^n$--translation invariant, exotic and singular linear forms on $CS_{\A_\th}^m(\Z^n)$ is naturally extended to linear forms defined on $CS_\C^m(\Z^n)$.

\medskip

Combining $(i)$ and $(iii)$ of the previous lemma yields:

\begin{cor}
\label{cor:step2}
Any $CS_{\A_\th}(\Z^n)$--trace $\la$ on $CS_{\A_\th}^m(\Z^n)$ is $\Z^n$--translation invariant and closed. Moreover the linear form  $\ol{\la}:=\la\circ\iota_\th$ is a $\Z^n$--translation invariant linear form on $CS_\C^m(\Z^n)$ satisfying $\la=\ol \la\circ \ol \t$.
If $\la$ is singular (resp.\ exotic, resp.\ $\ell^1$--continuous), then so is $\ol \la$.
\end{cor}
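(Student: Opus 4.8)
The plan is to read off the statement from Lemma \ref{lem:translationinv}, combining its parts $(iii)$ and $(i)$, so there is essentially nothing analytic left to do. First I would apply part $(iii)$: by hypothesis $\la$ is a $CS_{\A_\th}(\Z^n)$--trace on $CS_{\A_\th}^m(\Z^n)$, hence it is closed and $\Z^n$--translation invariant, which is precisely the first assertion. Being closed, $\la$ satisfies the hypothesis of part $(i)$, so it factorises uniquely through $\ol\t$: the induced linear form $\ol\la:=\la\circ\iota_\th$ on $\iota_\th^{-1}(CS_{\A_\th}^m(\Z^n))=CS_\C^m(\Z^n)$ obeys $\la=\ol\la\circ\ol\t$.

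It then remains to transfer the two qualitative properties to $\ol\la$. For $\Z^n$--translation invariance I would use that the injection $\iota_\th$ intertwines the translations $T_l$: since $\iota_\th(\tau)(k)=\tau(k)U_0$, one has $T_l\,\iota_\th(\tau)=\iota_\th(T_l\tau)$ for all $l\in\Z^n$ and all $\tau\in CS_\C^m(\Z^n)$, so $\ol\la(T_l\tau)=\la\bigl(\iota_\th(T_l\tau)\bigr)=\la\bigl(T_l\,\iota_\th(\tau)\bigr)=\la\bigl(\iota_\th(\tau)\bigr)=\ol\la(\tau)$, using the $\Z^n$--translation invariance of $\la$ just obtained. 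For the remaining properties I would note that $\iota_\th$ sends smoothing symbols to smoothing symbols, maps $CS_\C^{<-n}(\Z^n)$ into $CS_{\A_\th}^{<-n}(\Z^n)$ (orders are preserved), and is continuous for the relevant $\ell^1$--topologies (in fact isometric, since $U_0=1$); hence if $\la$ vanishes on smoothing (resp.\ on order $<-n$) symbols, or is $\ell^1$--continuous in each fixed order $<-n$, the same holds for $\ol\la=\la\circ\iota_\th$.

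There is no genuine obstacle in this corollary itself --- it is a short bookkeeping argument around the explicit map $\iota_\th$. The real work sits in Lemma \ref{lem:translationinv}, in particular in part $(i)$, where one must show that the decomposition \eqref{symbdecomp} of $\sg$ into its scalar part $\iota_\th\circ\ol\t(\sg)$ plus a sum of derivatives $\ol\delta_j(\tau^{(j)})$ lies within the space of \emph{classical} symbols; that, together with part $(iii)$ which rewrites $(T_k-I)\sg$ in terms of star--brackets, is where the content lies.
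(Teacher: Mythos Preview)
Your proposal is correct and follows exactly the paper's approach: the corollary is stated as an immediate consequence of combining parts $(iii)$ and $(i)$ of Lemma \ref{lem:translationinv}, and your additional verification that $\iota_\th$ intertwines translations, preserves order, and is $\ell^1$--continuous is the routine bookkeeping the paper leaves implicit.
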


\subsection{Classification of translation invariant linear forms on (commutative) toroidal symbols}
\label{subsecclassif}

As the Corollary \ref{cor:step2} shows, finding $CS_{\A_\th}(\Z^n)$--traces on $CS_{\A_\th}^m(\Z^n)$ reduces to its commutative counterpart, namely finding $\Z^n$--translation invariant linear forms on $CS_\C^m(\Z^n)$, an issue we deal with in this section.

In the following $[\sg]$ denotes the equivalence class of a symbol modulo smoothing symbols. 

By definition, a leading symbol form is singular and we shall need a few properties of singular linear forms. 
 
\begin{lem}
\label{lem:singext}
(i) The map 
\[
\Phi: S_{\C}(\Z^n)/S^{-\infty}_\C(\Z^n) \to S_\C(\R^n)/S^{-\infty}_{\C}(\R^n)
\]
defined by $\Phi([\sg]) = [e(\sg)]$, where $e$ is an extension map, is well defined, and independent of the choice of $e$.

\medskip

(ii) The map $\Phi$ is a linear isomorphism. Moreover, $\Phi^{-1}([\sg]) = [\sg_{|\Z^n}]$ for all $\sg\in CS_\C(\R^n)$.

\medskip

(iii) If $\la$ is a linear form on $CS^m_\C(\Z^n)$, then the linear form on $CS^m_\C(\R^n)$  
\[
\wt \la : \sg \mapsto   \la(\sg_{|\Z^n}) 
\]
satisfies $\la = \wt \la \circ e$ for every extension map $e$. If $\la$ is singular (resp.\ exotic, $\Z^n$--translation invariant), then so is $\wt \la$.
Moreover, when $\la$ is singular, $\wt \la=\dot{\la}\circ \Phi^{-1} \circ [\cdot]$, where $\dot{\la}$ denotes the quotient linear map on $CS_\C^m(\Z^n)/ S^{-\infty}_\C(\Z^n)$ associated to $\la$.
\end{lem}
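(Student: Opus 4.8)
The plan is to verify the three parts in order, each reducing to the previously established properties of extension maps (Lemma \ref{lem:extensionmap}) and the restriction map (Lemma \ref{restr}). For $(i)$, I would first observe that if $\sg\in S_\C^{-\infty}(\Z^n)$ then $e(\sg)\in S_\C^{-\infty}(\R^n)$ since $e$ sends $S^m_\C(\Z^n)$ continuously into $S^m_\C(\R^n)$ for every $m\in\R$ (Definition \ref{def:extension}), so $\Phi$ is well defined on the quotient. Independence of the choice of $e$ is exactly Lemma \ref{lem:extensionmap}$(iii)$: if $e,e'$ are two extension maps then $e-e'$ maps $S_\C(\Z^n)$ into $S_\C^{-\infty}(\R^n)$, hence $[e(\sg)]=[e'(\sg)]$ in $S_\C(\R^n)/S^{-\infty}_\C(\R^n)$. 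Linearity of $\Phi$ is immediate from linearity of $e$.

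For $(ii)$, I would exhibit the two-sided inverse explicitly. Define $\Psi:[\sg]\mapsto[\sg_{|\Z^n}]$ for $\sg\in S_\C(\R^n)$; this is well defined by Lemma \ref{restr} (restriction sends smoothing to smoothing). Then $\Psi\circ\Phi([\sg])=[e(\sg)_{|\Z^n}]=[\sg]$ because $e(\sg)$ is an extension of $\sg$, i.e.\ $e(\sg)_{|\Z^n}=\sg$. Conversely $\Phi\circ\Psi([\sg])=[e(\sg_{|\Z^n})]$, and since $e(\sg_{|\Z^n})$ and $\sg$ are both extensions of $\sg_{|\Z^n}$, Lemma \ref{lem:extensionmap}$(iii)$ gives $e(\sg_{|\Z^n})\sim\sg$, so $\Phi\circ\Psi=\Id$. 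Thus $\Phi$ is a linear isomorphism with $\Phi^{-1}=\Psi$; restricting to classical symbols (which are stable under $e$ and under restriction, by Lemma \ref{lem:classical}) gives the stated formula $\Phi^{-1}([\sg])=[\sg_{|\Z^n}]$ on $CS_\C(\R^n)$.

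For $(iii)$, set $\wt\la(\sg):=\la(\sg_{|\Z^n})$ for $\sg\in CS^m_\C(\R^n)$; this is a well-defined linear form by Lemma \ref{restr} applied within the classical class. Given any extension map $e$ and $\sg\in CS^m_\C(\Z^n)$, we have $\wt\la(e(\sg))=\la(e(\sg)_{|\Z^n})=\la(\sg)$, so $\la=\wt\la\circ e$. For the preservation of properties: if $\la$ is singular, then $\la$ vanishes on $S^{-\infty}_\C(\Z^n)$, and since restriction of a smoothing symbol on $\R^n$ is smoothing (Lemma \ref{restr}), $\wt\la$ vanishes on $S^{-\infty}_\C(\R^n)$; the argument for exotic is identical using that restriction lowers order (Lemma \ref{restr} preserves $S^m$, so order $<-n$ symbols restrict to order $<-n$ discrete symbols). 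For $\Z^n$--translation invariance, note $T_l(\sg_{|\Z^n})=(T_l\sg)_{|\Z^n}$ for $l\in\Z^n$, so $\wt\la(T_l\sg)=\la((T_l\sg)_{|\Z^n})=\la(T_l(\sg_{|\Z^n}))=\la(\sg_{|\Z^n})=\wt\la(\sg)$. Finally, when $\la$ is singular it descends to $\dot\la$ on $CS^m_\C(\Z^n)/S^{-\infty}_\C(\Z^n)$, and the identity $\wt\la=\dot\la\circ\Phi^{-1}\circ[\cdot]$ is just the statement $\wt\la(\sg)=\la(\sg_{|\Z^n})=\dot\la([\sg_{|\Z^n}])=\dot\la(\Phi^{-1}([\sg]))$ using part $(ii)$.

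The only mildly delicate point is making sure all maps are well defined \emph{on the classical subclasses} and that the quotient $CS^m_\C/S^{-\infty}_\C$ behaves correctly; this is handled by Lemma \ref{lem:classical} (classicality is stable under $e$ and restriction) together with Lemma \ref{restr}. Everything else is formal bookkeeping with the already-proven Lemma \ref{lem:extensionmap}$(iii)$, so I do not anticipate a genuine obstacle here.
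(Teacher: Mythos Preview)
Your proposal is correct and follows essentially the same approach as the paper, invoking Lemma~\ref{restr} and Lemma~\ref{lem:extensionmap}$(iii)$ at the same places. The only cosmetic difference is that in $(ii)$ you exhibit the inverse $\Psi$ explicitly and check both compositions, whereas the paper verifies injectivity and surjectivity separately; the underlying ingredients are identical, and your more detailed treatment of $(iii)$ simply unpacks what the paper dismisses as ``straightforward.''
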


\begin{proof}
$(i)$ Let $\sg$, $\sg'$ be two symbols such that $\sg\sim \sg'$. Then $e(\sg)\sim e(\sg')$, which implies that $\Phi$ is well defined. Let $e$ and $e'$ be two extension maps, and $\sg\in S_\C(\Z^n)$. Then $e(\sg) \sim e'(\sg)$ by Lemma \ref{lem:extensionmap} $(iii)$, which implies that $\Phi$ is indeed independent of $e$.

\medskip

$(ii)$ The map $\Phi$ is clearly linear. Suppose that $\Phi([\sg])=0$ for a symbol $\sg\in S_\C(\Z^n)$. Then, for an extension map $e$, $e(\sg)$ is smoothing. Thus, by Lemma \ref{restr}, $\sg$ is smoothing, and $[\sg]=0$. Therefore, $\Phi$ is injective. 
It remains to show that $\Phi$ is surjective. Let $[\sg]\in S_\C(\R^n)/S^{-\infty}_{\C}(\R^n)$, and let $e$ be an extension map. We have $\sg\sim e(\sg_{|\Z^n})$ by Lemma \ref{lem:extensionmap} $(iii)$, and therefore $[\sg] = [e(\sg_{|\Z^n})] =\Phi([\sg_{|\Z^n}])$.

\medskip

$(iii)$ This is straightforward.
\end{proof}
 
The singularity of a linear form is preserved under composition with the extension map.

\begin{lem}
\label{lem:singular and tranlation inv}
 Let $\mu:CS^m_\C(\R^n)\to\C$ be a linear form and let $e$ be any extension map. If $\mu$ is singular (resp.\ exotic, $\Z^n$--translation invariant), then so is $ \mu \circ e$.
\end{lem}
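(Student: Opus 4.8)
The plan is to treat the three properties (singular, exotic, $\Z^n$--translation invariant) one at a time, in each case simply unwinding the definition and invoking the two characterising features of an extension map: it preserves order filtration (it sends $S^m_\C(\Z^n)$ continuously into $S^m_\C(\R^n)$), and it intertwines translations ($e\circ T_l = T_l\circ e$).

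For singularity, suppose $\mu$ vanishes on smoothing symbols. Let $\sg\in CS^m_\C(\Z^n)$ be smoothing, i.e.\ $\sg\in S^{-\infty}_\C(\Z^n)$. Since $e$ sends $S^p_\C(\Z^n)$ into $S^p_\C(\R^n)$ for every real $p$, and $S^{-\infty}_\C=\cap_p S^p_\C$, we get $e(\sg)\in S^{-\infty}_\C(\R^n)$; hence $(\mu\circ e)(\sg)=\mu(e(\sg))=0$. The exotic case is verbatim the same with $-\infty$ replaced by the condition $\Real(\text{order})<-n$: if $\sg\in CS^m_\C(\Z^n)$ has $\Real(m)<-n$, then $e(\sg)\in S^{\Real(m)}_\C(\R^n)$, which by classicality sits in $CS^{<-n}_\C(\R^n)$, so $\mu$ kills it. For $\Z^n$--translation invariance, use the equivalent formulation $\mu\circ\Delta_j=0$ (or equivalently $T_l^*\mu=\mu$) and the intertwining relation: for $\sg\in CS^m_\C(\Z^n)$ and $l\in\Z^n$ we have $(\mu\circ e)(T_l\sg)=\mu(e(T_l\sg))=\mu(T_l(e(\sg)))=(T_l^*\mu)(e(\sg))=\mu(e(\sg))=(\mu\circ e)(\sg)$, using that $\mu$ is $\Z^n$--translation invariant on $CS^m_\C(\R^n)$; equivalently one notes $e\circ\Delta_j$ equals the composition of the difference operator on $\R^n$ with $e$ in the appropriate sense, but the $T_l$ version is cleanest.

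I do not expect any genuine obstacle here: the lemma is a formal consequence of the defining axioms of an extension map (Definition \ref{def:extension}) combined with the elementary observation that $CS^m_\C(\Z^n)$ is stable under $T_l$ (Remark \ref{rk:transl}) so that $\mu\circ e$ is actually defined on all of $CS^m_\C(\Z^n)$ and the expressions above make sense. The only point requiring the mildest care is making sure that $e$ maps $CS^m_\C(\Z^n)$ into $CS^m_\C(\R^n)$ rather than just $S^m_\C(\R^n)$ — this is exactly Lemma \ref{lem:classical} — so that in the exotic case one may pass from ``order $m$ with $\Real(m)<-n$'' to ``classical of order $<-n$'', and so that the leading-symbol-type arguments are not needed at all. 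Thus the proof is a three-line check for each of the three clauses, and I would simply present it as such.
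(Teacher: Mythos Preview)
Your proposal is correct and follows essentially the same approach as the paper's own proof, which is a two-line sketch citing exactly the same ingredients: order preservation of $e$ (via Lemma \ref{lem:classical}) for the singular/exotic cases, and the commutation $e\circ T_l = T_l\circ e$ from Definition \ref{def:extension} for $\Z^n$--translation invariance. You have simply unpacked these references explicitly.
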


\begin{proof}
The facts that $\mu\circ e$ is singular, {resp.\  exotic}, resp.\ $\Z^n$--translation invariant follow respectively from the facts that $e$ preserves the order (Proposition \ref{prop:classical}), and that $e$ commutes with translations (Definition \ref{def:extension}).
\end{proof}

The noncommutative residue (resp.\ the canonical integral) introduced in (\ref{eq:ncr}) (resp.\ (\ref{eq:cutoff})) is an $\R^n$-- (and hence $\Z^n$--) translation invariant linear form on symbols on $CS_\C^m(\R^n)$ with $m\in \Z$ (resp.\ $m\in \C\setminus \Z_n$) in view of \cite[Corollary 2.59]{P2} (resp.\ \cite[Theorem 2.61]{P2}). For exotic linear forms there is a one to one correspondence between $\Z^n$--translation invariance, $\R^n$--translation invariance and Stokes' property of a linear form \cite[Proposition 5.34]{P2}, a property we are about to define.

\begin{defn} 
A linear form $\la$ on $CS_\C^m(\R^n)$ is said to satisfy \emph{Stokes' property} if for all $i=1,\ldots,n$ and for all $\sg\in CS_\C^m(\R^n)$, $\la\circ\partial_{\xi_i}(\sg)=0$.
\end{defn} 

For the sake of completeness we provide the proof of the fact that $\Z^n$--translation invariance implies Stokes' property for exotic linear forms since we will use this fact explicitly.

\begin{prop}\cite[Proposition\ 5.34]{P2}
\label{prop:exotic transl inv Stokes}
Let $\lambda$ be an exotic linear form on $CS_{\C}^m(\R^n)$. If $\lambda$ is $\Z^n$--translation invariant then it satisfies Stokes' property. 
\end{prop}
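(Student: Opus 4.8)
The plan is to pass from discrete translation invariance to continuous translation invariance to Stokes' property, using the extension machinery already set up. First I would observe that $\Z^n$--translation invariance of $\lambda$ means $\lambda\circ\Delta_j=0$ for all $j$, where $\Delta_j$ is the forward difference operator. The goal is to show $\lambda\circ\partial_{\xi_i}=0$. The natural bridge is the integral representation $\Delta_j = T_{e_j}-I$, together with the Newton--Leibniz formula $\sigma(\xi+e_j)-\sigma(\xi)=\int_0^1 \partial_{\xi_j}\sigma(\xi+te_j)\,dt$, which expresses the finite difference as an average of translates of the derivative.

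\medskip

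The key steps, in order: (1) Fix $i$ and $\sigma\in CS_\C^m(\R^n)$; we want $\lambda(\partial_{\xi_i}\sigma)=0$. (2) Write $(T_{e_i}-I)\sigma = \int_0^1 T_{te_i}(\partial_{\xi_i}\sigma)\,dt$, an identity in $CS_\C^m(\R^n)$ where the integral converges in the Fr\'echet topology of the symbol space (here one uses that $\partial_{\xi_i}\sigma\in CS_\C^{m-1}(\R^n)$ and that the translates $T_{te_i}\tau$ of a fixed symbol $\tau$ form a bounded, continuously varying family in $CS_\C^{\Real(m)-1}(\R^n)$ by the remark before Remark \ref{rk:transl}, i.e.\ \cite[Prop.\ 2.52]{P2}). (3) Since $\lambda$ is exotic it is in particular $\ell^1$--continuous on symbols of order $<-n$, but to exploit continuity on $CS_\C^m(\R^n)$ for arbitrary $m$ we instead use the algebraic reformulation: $\lambda\circ(T_{e_i}-I)=0$ is exactly $\Z^n$--translation invariance (taking $l=e_i$ in condition 1 of Definition \ref{def:trans-closed}, transported to the $\R^n$ side). (4) Therefore $0 = \lambda\bigl((T_{e_i}-I)\sigma\bigr)$; the remaining task is to commute $\lambda$ with the integral $\int_0^1\!\cdot\,dt$ and conclude $\int_0^1 \lambda\bigl(T_{te_i}(\partial_{\xi_i}\sigma)\bigr)\,dt=0$. (5) For $t$ irrational, $T_{te_i}(\partial_{\xi_i}\sigma)$ is again an element of $CS_\C^{m-1}(\R^n)$, and one shows $\lambda\bigl(T_{te_i}\tau\bigr)=\lambda(\tau)$ for all real $t$: indeed $t\mapsto\lambda(T_{te_i}\tau)$ is constant on $\Z$, and combining the difference relation at non-integer scales — or more directly, noting that $T_{te_i}-T_{se_i}$ applied to $\tau$ is $(T_{(t-s)e_i}-I)T_{se_i}\tau$ and that $\lambda$ kills $(T_{r e_i}-I)$-images for all real $r$ once we know it kills them for $r\in\Z$ together with a density/continuity argument — gives $\lambda(T_{te_i}\tau)=\lambda(\tau)$. (6) Then $0=\int_0^1 \lambda(\partial_{\xi_i}\sigma)\,dt=\lambda(\partial_{\xi_i}\sigma)$, which is Stokes' property.

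\medskip

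The main obstacle I anticipate is step (5)–(4): rigorously commuting the linear form $\lambda$ past the Bochner-type integral $\int_0^1 T_{te_i}(\partial_{\xi_i}\sigma)\,dt$ and upgrading $\Z^n$--invariance to $\R^n$--invariance of $\lambda$ on the relevant fixed-order symbol space. This is where one genuinely needs either continuity of $\lambda$ on $CS_\C^{m-1}(\R^n)$ or a clever algebraic identity. The clean route is: since $\partial_{\xi_i}\sigma$ has order $m-1$ and we may iterate, reduce to the exotic range $\Real(m)-k<-n$ by considering $\partial_{\xi_i}^{k+1}\sigma$ and integrating the telescoped finite differences; on that range $\lambda$ is $\ell^1$--continuous, the integral converges in $\ell^1(\R^n)$, Fubini applies, and $\lambda(T_{te_i}\tau)=\lambda(\tau)$ follows from $\ell^1$--continuity plus translation invariance on the lattice by approximating $t$ by rationals and rescaling. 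Running this argument at the level of homogeneous components (as in \cite[Proposition 5.34]{P2}) then transfers the conclusion back to general order $m$ since the finite-difference operator and $\partial_{\xi_i}$ both lower order by one and the leading behaviour is controlled by finitely many homogeneous terms plus an exotic remainder. I would structure the write-up to isolate the lemma ``an exotic $\Z^n$--translation invariant linear form is $\R^n$--translation invariant'' first, and then deduce Stokes' property from the Newton--Leibniz identity in one line.
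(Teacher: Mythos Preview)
Your approach has a genuine gap at the point you yourself flagged. Commuting $\lambda$ with the Bochner integral $\int_0^1 T_{te_i}(\partial_{\xi_i}\sigma)\,dt$, and upgrading $\Z^n$--invariance to $\R^n$--invariance, both require some continuity of $\lambda$ on a fixed-order symbol space. No such continuity is assumed: ``exotic'' only means that $\lambda$ \emph{vanishes} on symbols of order with real part $<-n$. In particular, your statement in step (3) that ``$\lambda$ is in particular $\ell^1$--continuous on symbols of order $<-n$'' is true only trivially (the zero map is continuous), and your reduction in step (5) to the exotic range gives nothing: once $\partial_{\xi_i}^{k+1}\sigma$ has order $<-n$, both sides of every identity you write are zero by exoticness, so no information about $\lambda(\partial_{\xi_i}\sigma)$ can be extracted this way. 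The density/continuity argument you sketch for passing from integer to real translations likewise presupposes continuity on $CS_\C^{m-1}(\R^n)$, which is not given.

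The paper's proof sidesteps all of this with a purely algebraic device: expand $t_{\vec p}^*\sigma$ as a finite Taylor polynomial in $\vec p$ with remainder $R_K^{\vec p}(\sigma)$; choose $K$ large enough that the remainder lies in $L^1(\R^n)$, hence is killed by exoticness. Then $\Z^n$--translation invariance gives, for every $\vec p\in\Z^n$,
\[
0=\lambda(t_{\vec p}^*\sigma-\sigma)=\sum_{1\le|\alpha|<K}\lambda(\partial_\xi^\alpha\sigma)\,\frac{\vec p^{\,\alpha}}{\alpha!}.
\]
This is a polynomial in $\vec p$ vanishing on all of $\Z^n$, hence identically zero, so each coefficient $\lambda(\partial_\xi^\alpha\sigma)$ vanishes; taking $\alpha=e_i$ gives Stokes' property. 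No continuity and no Bochner integrals are needed---only linearity and exoticness. Your Newton--Leibniz route is the special case $K=2$ of this expansion, but with the remainder written as an integral; the paper instead takes $K$ large enough that the remainder drops into the exotic range and disappears for free.
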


\begin{proof}
Let $\sigma$ be a symbol in $CS_{\C}^m(\R^n)$. For any $p$ in $\Z^n$ the translated symbol $T_{p}\sigma:= \sigma(\cdot+p)$ also lies in $CS_{\C}^m(\R^n)$ as can be seen from a Taylor expansion of $\xi\mapsto \sigma(\xi+p)$ at $p=0$ \cite[Proposition 5.52]{P2}.
More precisely, there is an integer $K\geq 2$ such that for any $p \in \Z^n$ the remainder term $R^{p}_K(\sigma):= T_{p}\sigma-\sum_{\vert \alpha\vert=0}^{K-1} \partial_\xi^\alpha\sigma\frac{p^\alpha}{\alpha!}$ lies in $L^1(\R^n)$. Since $\lambda$ is exotic and $\Z^n$--translation invariant this implies that for any $p$ in $\Z^n$
we have
\[
0=\lambda\left(  T_{p}\sigma-\sigma\right) =\sum_{\vert \alpha\vert=1}^{K-1}
\lambda( \partial_\xi^\alpha\sigma)\, \frac{p^\alpha}{\alpha!}+
\lambda(R^{p}_K(\sigma))=\sum_{\vert \alpha\vert=1}^{K-1}
\lambda( \partial_\xi^\alpha\sigma)\, \frac{p^\alpha}{\alpha!}. 
\]  
Thus $\lambda(\partial_\xi^\alpha\sigma)=0$ for any $\alpha\in \N^n$ such that $1\leq \vert \alpha\vert <K$. Choosing $\alpha=(0,\cdots, 0, 1,0\cdots,0)$ with the $1$ at the $i$--th slot yields $\lambda(\partial_{\xi_i}\sigma)=0$ and hence Stokes' property.
\end{proof}
 
The following proposition characterises $\Z^n$--translation invariant linear forms on symbols on $\R^n$. Recall that $\Z_n=\Z\cap[-n,+\infty[$. For any $m\in\C$ we denote by $H_\C^{m}(\R^n)$ the space of smooth functions $f$ from $\R^n\backslash \set{0}$ into $\C$, such that $f(t\xi)= t^mf(\xi)$ for all $t>0$ and $\xi\in \R^n\backslash \set{0}$. 

\begin{prop}  
\label{prop:charresRn} 
(i) Let $m\in \Z_n$. Any $\Z^n$--translation invariant exotic linear form on $CS_\C^m(\R^n)$ is a linear combination of a leading symbol form and the restriction of the noncommutative residue to $CS_\C^m(\R^n)$.\\ 
In particular, any $\Z^n$--translation invariant exotic linear form on $CS_\C^\Z(\R^n)$ is proportional to the noncommutative residue (compare with \cite[Proposition 5.40]{P2}).

\medskip

(ii) Let $m\in \C\setminus \Z_n$. Any $\Z^n$--translation invariant exotic linear form on $CS_\C^m(\R^n)$ is a leading symbol form.

 \medskip

(iii) Any $\Z^n$--translation invariant exotic linear form on $CS_\C^{\notin \Z_n}(\R^n)$ vanishes.
\end{prop}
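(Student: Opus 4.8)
The plan is to evaluate $\lambda$ separately on each positively homogeneous component of a classical symbol, running for each component the classical dichotomy \emph{Euler's identity versus Stokes' property}. Fix $\lambda$ as in the statement. Since $\lambda$ is exotic, hence singular, for $\sigma\in CS_\C^m(\R^n)$ the value $\lambda(\sigma)$ depends only on finitely many components: choosing $J$ with $\Real(m)-J-1<-n$, the tail $\sigma-\sum_{j=0}^{J}\sigma_{[m-j]}$ has real order $<-n$ and is annihilated by $\lambda$, so $\lambda(\sigma)=\sum_{j=0}^{J}\lambda(\sigma_{[m-j]})$. Everything therefore reduces to evaluating $\lambda$ on components of degree $\ge -n$.

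The workhorse is the following. Let $\tau$ be positively homogeneous of degree $d$; modulo a smoothing (indeed compactly supported) term we may take $\tau=\chi\,\psi_d$ with $\psi_d\in C^\infty(\R^n\backslash\{0\})$ genuinely homogeneous of degree $d$ and $\chi$ a cut--off vanishing near $0$ and equal to $1$ for $|\xi|\ge 1$. Euler's identity $\sum_i\partial_{\xi_i}(\xi_i\psi_d)=(d+n)\psi_d$ on $|\xi|\ge 1$ gives, modulo smoothing, $(d+n)\,\tau=\sum_{i=1}^n\partial_{\xi_i}(\xi_i\tau)$, where each $\xi_i\tau$ is a classical symbol of order $d+1$. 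Whenever $\xi_i\tau$ lies in the domain of $\lambda$, Proposition \ref{prop:exotic transl inv Stokes} (applied to the restriction of $\lambda$ to the appropriate fixed--order class, which inherits exoticness and $\Z^n$--translation invariance) yields $\lambda(\partial_{\xi_i}(\xi_i\tau))=0$, hence $(d+n)\lambda(\tau)=0$, so $\lambda(\tau)=0$ as soon as $d\neq -n$. Applying this to $\tau=\sigma_{[m-j]}$ with $j\ge 1$: then $\Real(d+1)\le\Real(m)$, so $\xi_i\sigma_{[m-j]}\in CS_\C^m(\R^n)$ stays in the same class, and $d+n=m-j+n=0$ can only happen when $m\in\Z_n$. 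This already kills every non--leading component of degree $\neq -n$, covering (iii) for all but the leading component and (i)--(ii) for all but the leading component and, when $m\in\Z_n$, the degree--$(-n)$ component.

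Two exceptional components remain. For the leading component ($j=0$, degree $m$) the antiderivative $\xi_i\sigma_{[m]}$ has order $m+1$ and leaves $CS_\C^m(\R^n)$, which is exactly why a leading symbol form $L:=\lambda|_{QS_\C^m(\R^n)}$ (well defined on $\sim$--classes by singularity) persists in (i)--(ii). For the degree--$(-n)$ component, occurring at $j=m+n$ when $m\in\Z_n$, one invokes the classical fact that a function homogeneous of degree $-n$ on $\R^n\backslash\{0\}$ with vanishing integral over $\Sbb^{n-1}$ is a sum $\sum_i\partial_{\xi_i}g_i$ with $g_i$ homogeneous of degree $-n+1$; cutting off near $0$ and applying Stokes' property (the antiderivatives have real order $-n+1\le\Real(m)$ precisely because $m\ge 1-n$, while $m=-n$ makes the residue itself a leading symbol form), one gets $\lambda(\sigma_{[-n]})=c\,\res(\sigma_{[-n]})$ with $c:=\lambda(\sigma_0)/\Vol(\Sbb^{n-1})$ for the fixed symbol $\sigma_0=\chi(\xi)|\xi|^{-n}$. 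Collecting the pieces: in (iii), $m\notin\Z_n$ forces $\Real(m)\ge -n$ (else $\lambda$ vanishes trivially) and then $m+1\notin\Z_n$, so the leading component can be pushed into $CS_\C^{m+1}(\R^n)$ and is killed, giving $\lambda\equiv 0$; in (ii) no component has degree $-n$, so $\lambda=L\circ(\cdot)_{[m]}$; in (i), $\lambda=L\circ(\cdot)_{[m]}+c\,\res$, and on $CS_\C^\Z(\R^n)$ the same push--up into order $m+1$ annihilates $L\circ(\cdot)_{[m]}$, leaving $\lambda=c\,\res$. The main obstacle is the degree--$(-n)$ bookkeeping: isolating the divergence lemma for degree--$(-n)$ homogeneous functions and checking, case by case, that the antiderivatives produced by Euler's identity and by that lemma genuinely lie in a symbol class on which Stokes' property is available.
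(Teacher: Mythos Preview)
Your argument is correct and follows essentially the same route as the paper's proof: reduce to finitely many homogeneous components by exoticness, invoke Proposition~\ref{prop:exotic transl inv Stokes} to get Stokes' property, kill the non--leading components of degree $\neq -n$ via Euler's identity, and handle the degree $-n$ component via the divergence lemma for homogeneous functions of degree $-n$ (the paper cites \cite[Lemma~1.3]{FGLS} for this). The only presentational differences are that the paper packages the argument by introducing the induced forms $\lambda_j$ on genuinely homogeneous functions $H^{m-j}_\C(\R^n)$ (absorbing the cut--off $\chi$ into the definition), whereas you work directly with the symbols $\chi\psi_d$; and that the paper deduces $(iii)$ and the ``in particular'' clause of $(i)$ from the observation that a leading symbol form cannot extend consistently across different orders, whereas you argue directly by pushing $\xi_i\sigma_{[m]}$ into the larger symbol class $CS_\C^{m+1}$ and applying Stokes there. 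Both variants are valid and amount to the same mechanism.
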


\begin{proof} 
We note that $(iii)$ easily follows from $(ii)$ since leading symbol forms do not extend beyond a symbol set of fixed order. Let us prove $(i)$ and $(ii)$. 

Let $m\in \C$ and let $\lambda$ be a $\Z^n$--translation invariant exotic linear form on $CS_\C^m(\R^n)$.
Since $\lambda$ is exotic, it induces a linear form $\lambda_j$ on every ${H}_\C^{m-j}(\Z^n)$  with $j\in \N$ defined by $\lambda_j(f)= \lambda(f\, \chi )$ for any $f$ in ${H}_\C^{m-j}(\R^n)$ and any excision function $\chi$ around $0$. Moreover, the induced linear form $\lambda_j $ vanishes on every ${H}_\C^{m-j}(\R^n)$ with $\Real(m)+n<j$. Thus, applied to a symbol $\sigma\sim \sum_j f_{[m-j]} \chi$ in $CS_\C^m(\R^n)$ the linear form $\lambda$ vanishes if $\Real(m) < -n$ and reads 
 
\begin{equation}
 \label{eq:lambdaj}\lambda(\sigma)= \sum_{0\leq j\leq \Real(m)+n}\lambda_j (f_{[m-j]})\quad \text{if} \quad \Real(m) \geq -n.
\end{equation}
 
We henceforth assume that $\Real(m) \geq -n$. By Proposition \ref{prop:exotic transl inv Stokes} the linear form $\lambda$ satisfies Stokes' property as a result of its $\Z^n$--translation invariance. Since $\partial_{\xi_i} \chi$ has compact support it follows that 
 \[
 \lambda_j(\partial_{\xi_i} g )=\lambda(\partial_{\xi_i} g\, \chi )= -\lambda( g \,\partial_{\xi_i} \chi )=0 
 \]
for any $i\in \{1,\cdots, n\}$ and any $g\in {H}_\C^{m-j+1}(\R^n)$ with $j\in \N^*$. Let $f\in {H}_\C^{m-j}(\R^n)$ for some $j\in \N$. If $m-j\neq -n$, then $f(\xi)=\frac{1}{m-j+n}\sum_{i=1}^n\partial_{\xi_i}(\xi_i f(\xi))$ so that $\lambda_j(f)=0$. If $m-j=-n$ then $h(\xi):= f(\xi)-{\res}(f)\,\vert \xi\vert^{-n}$ has vanishing residue. It follows from \cite[Lemma 1.3]{FGLS} that $h=\sum_{i=1 }^n\partial_{\xi_i} h_i$ for some homogeneous functions $h_i\in {H}_\C^{-n+1}(\R^n)$. Thus $\lambda_j(h)= 0$ and hence  $\lambda_j(f)= C\, {\res}(f)$ with $C:= \lambda_j(\xi\mapsto \vert \xi\vert^{-n})=\lambda(\xi\mapsto\vert \xi\vert^{-n}\, \chi(\xi))$ independent of $\chi$ and $j$. Setting $f= f_{[m-j]}$ for some $j\in \N$, $\sg_{[m]}:=f_{[m]} \chi$, and inserting the result back in (\ref{eq:lambdaj}) yields 
\[
 \lambda(\sigma-\sigma_{[m]})=  \sum_{0<j\leq \Real(m)+n}\lambda_j (f_{[m-j]})=0 \quad \text{if} \quad m\notin \Z_n,
\] 
and
\[
 \lambda(\sigma-\sigma_{[m]})=  C\,  \sum_{0<j\leq \Real(m)+n}{\res}(f_{[m-j]})\,\delta_{m-j+n,0}= C\,{\res}(\sigma) \quad \text{if} \quad m\in \Z_n,
\]
where $\delta_{m-j+n,0}$ is the Kronecker delta function.

Summing up we find that $\lambda(\sigma)=\lambda(\sigma_{[m]}) $ if $m\notin \Z_n$  and $\lambda(\sigma)=\lambda(\sigma_{[m]})+ C{\res}(\sigma) $ otherwise, which yields the announced characterisation.
\end{proof}

\begin{rk}
 In the case that $m=-n$, the restriction of the noncommutative residue to $CS_\C^{-n}(\R^n)$ is an example of a leading symbol form, so we have that any $\Z^n$--translation invariant exotic linear form on $CS_\C^{-n}(\R^n)$ is a leading symbol form.
\end{rk}

The following theorem  yields a characterisation of  $\Z^n$--translation invariant exotic linear forms on integer order toroidal symbols, which is new to our knowledge.
\begin{thm}
\label{thm:charres}
The \emph{toroidal noncommutative residue}, defined for all symbols $\sigma\in CS_\C(\Z^n)$ by
 \begin{equation}
 \label{res}
 \restor(\sigma):=\res\circ e\,(\sg)=\int_{\Sbb^{n-1}} e(\sigma)_{[-n]}(\xi)\, dS(\xi),  \index{$\restor$}
 \end{equation}
where $e$ is an extension map, is independent of the choice of $e$. It is a $\Z^n$--translation invariant exotic linear form (see Definition \ref{exotic}) on $CS_\C(\Z^n)$.

\medskip 
(i)  Let $m\in \Z_n$. Any $\Z^n$--translation invariant exotic linear form on $CS_\C^{m}(\Z^n)$ is a linear combination of a leading symbol form and the restriction of the toroidal residue $\restor$ to $CS_\C^{m}(\Z^n)$. 

\medskip
(ii) Let $m\in \C\setminus \Z_n$. Any $\Z^n$--translation invariant exotic linear form on $CS_\C^{m}(\Z^n)$ is a leading symbol form.

\medskip
(iii) Any $\Z^n$--translation invariant exotic linear form on $CS_\C^{\Z_n}(\Z^n)$ is proportional to the restriction of the toroidal residue $\restor$ to $CS_\C^{\Z_n}(\Z^n)$.
\end{thm}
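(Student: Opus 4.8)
The strategy is to transfer everything, via an extension map $e$, to the $\R^n$-classification of Proposition~\ref{prop:charresRn}. I would start with the properties of $\restor$. Independence of $e$ is immediate from Lemma~\ref{lem:extensionmap}$(iii)$: two extension maps agree modulo smoothing symbols, so $e(\sigma)\sim e'(\sigma)$, their homogeneous components of degree $-n$ coincide as $\sim$-classes by uniqueness of the quasihomogeneous resolution, and $\res$ depends only on that class. That $\restor$ is exotic follows because for $\sigma\in CS_\C^m(\Z^n)$ with $\Real(m)<-n$ the extension $e(\sigma)$ still has order with real part $<-n$, hence has a vanishing $(-n)$-homogeneous component; and $\Z^n$-translation invariance follows from $e\circ T_l=T_l\circ e$ together with the already recalled $\Z^n$-translation invariance of $\res$ on $CS_\C(\R^n)$.

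For $(i)$ and $(ii)$, given a $\Z^n$-translation invariant exotic linear form $\lambda$ on $CS_\C^m(\Z^n)$, I would form $\wt\lambda:\rho\mapsto\lambda(\rho_{|\Z^n})$ on $CS_\C^m(\R^n)$; by Lemma~\ref{lem:singext}$(iii)$ it is again $\Z^n$-translation invariant and exotic, and $\lambda=\wt\lambda\circ e$ for every extension map $e$. Proposition~\ref{prop:charresRn} then gives $\wt\lambda=L\circ(\cdot)_{[m]}+C\,\res$, with the term $C\,\res$ present precisely when $m\in\Z_n$, for some linear map $L:QS_\C^m(\R^n)\to\C$ and $C\in\C$. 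Composing on the right with $e$, and using that $\res\circ e=\restor$ on $CS_\C^m(\Z^n)$ and that $L\circ(\cdot)_{[m]}\circ e$ is exactly the leading symbol form on $CS_\C^m(\Z^n)$ induced by $L$ (the construction recalled after Definition~\ref{defn:leadingsymbolform}, independent of $e$), yields $(i)$ and $(ii)$ simultaneously.

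For $(iii)$ I would glue the fixed-order statements. Restricting a $\Z^n$-translation invariant exotic linear form $\lambda$ on $CS_\C^{\Z_n}(\Z^n)=\bigcup_{m\in\Z_n}CS_\C^m(\Z^n)$ to each $CS_\C^m(\Z^n)$ gives, by $(i)$, $\lambda|_{CS_\C^m(\Z^n)}=\ell_m+C_m\,\restor$ with $\ell_m$ a leading symbol form and $C_m\in\C$. Two facts drive the argument: for $m>-n$, the form $\restor$ on $CS_\C^m(\Z^n)$ depends on the $(-n)$-homogeneous part, which lies strictly below the leading order, so it is not a leading symbol form, while it is nonzero there (test on the restriction to $\Z^n$ of $\chi(\xi)(\lvert\xi\rvert^m+\lvert\xi\rvert^{-n})$, whose residue is $\Vol(\Sbb^{n-1})$); and a symbol of order exactly $m$, viewed in $CS_\C^{m+1}(\Z^n)$, has vanishing leading part there. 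Evaluating $\lambda$ on the restriction to $\Z^n$ of $f\chi$ with $f$ homogeneous of degree $m>-n$ (so $\restor$ vanishes on it) both as an order-$m$ and an order-$(m+1)$ symbol forces $\ell_m=0$ for every $m>-n$; repeating the comparison on a symbol with $\restor\neq0$ then forces $C_m=C_{m+1}$ for all $m>-n$. Finally, comparing orders $-n$ and $-n+1$ (using $\ell_{-n+1}=0$) gives $\lambda|_{CS_\C^{-n}(\Z^n)}=C\,\restor$ with $C:=C_{-n+1}$, so $\lambda=C\,\restor$ on all of $CS_\C^{\Z_n}(\Z^n)$.

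The main obstacle is the gluing in $(iii)$: one has to disentangle the leading contributions, which cannot be matched consistently across different orders, from the residue contribution, which can, and for this one needs enough test symbols of each order — both symbols annihilated by $\restor$ and symbols on which $\restor$ is nonzero — together with a separate treatment of the borderline order $m=-n$, where $\restor$ is itself a leading symbol form.
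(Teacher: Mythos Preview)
Your proof is correct and follows the same route as the paper: independence of $e$ via Lemma~\ref{lem:extensionmap}$(iii)$, the lift $\wt\lambda(\rho)=\lambda(\rho_{|\Z^n})$ from Lemma~\ref{lem:singext}$(iii)$, and the classification of Proposition~\ref{prop:charresRn} pulled back through $e$. For $(iii)$ the paper gives only the one-line justification that a leading symbol form on $CS_\C^m(\Z^n)$ does not extend to $CS_\C^{\Z_n}(\Z^n)$; your gluing with test symbols is precisely that observation made explicit, so the arguments coincide.
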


\begin{proof}
The facts that $\restor$ is exotic and $\Z^n$--translation invariant follow from the $\Z^n$--translation invariance of the residue $\res$ (\ref{eq:ncr}) on symbols on $\R^n$ combined with Lemma \ref{lem:singular and tranlation inv}.
The fact that $\restor$ is independent of the choice of the extension map follows from Lemma \ref{lem:extensionmap} $(iii)$.
\medskip

Let $m\in \C$. By Lemma \ref{lem:singext} $(iii)$, given an exotic $\Z^n$--translation invariant linear form $\la$ on $CS_\C^m(\Z^n)$, the corresponding linear form $\wt \la$ on $CS_\C^m(\R^n)$ is exotic and $\Z^n$--translation invariant. The statements $(i)$ and $(ii)$ then follow from Proposition \ref{prop:charresRn}  which classifies $\wt \la$ according to whether $m$ lies in $\Z_n$ or not. If $\wt \la$ is proportional to $\res$ then $\la$ is proportional to $\restor$.
If $\wt \la$ is a linear combination of the leading symbol form and $\res$ then so is $\la$ a linear combination of the induced leading symbol form on toroidal symbols and $\restor$. The statement $(iii)$ is a consequence of the fact that a leading symbol form on $CS_\C^m(\Z^n)$ does not extend to $CS_\C^{\Z_n}(\Z^n)$. 
\end{proof}
 
\begin{lem}
\label{lem:extendedlinearforms} 
  Let $m$ be a complex number with real part smaller than $-n$. Any $\Z^n$--translation invariant $\ell^1$--continuous linear form on the space of symbols $CS_\C^m(\Z^n)$ is proportional to ${\textstyle\sum_{\Z^n}}$ (the standard summation over $\Z^n$).  
\end{lem}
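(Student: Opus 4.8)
The plan is to use the hypothesis $\Real(m)<-n$ to realise $CS^m_\C(\Z^n)$ as a subspace of $\ell^1(\Z^n)$ on which $\lambda$ is continuous for the $\ell^1$--norm, and then to pin $\lambda$ down by evaluating it on the ``Dirac'' sequences and invoking $\Z^n$--translation invariance.

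First I would record two elementary observations. Since $\Real(m)<-n$, any $\sg\in CS^m_\C(\Z^n)$ satisfies $|\sg(k)|\leq C\,\langle k\rangle^{\Real(m)}$ and hence lies in $\ell^1(\Z^n)$; consequently ${\textstyle\sum_{\Z^n}}$ is a well defined linear form on $CS^m_\C(\Z^n)$, manifestly $\ell^1$--continuous and $\Z^n$--translation invariant, so the form in the statement is indeed an example. Second, for each $k\in\Z^n$ let $\eta_k\in\C^{\Z^n}$ be the sequence with $\eta_k(k)=1$ and $\eta_k(j)=0$ for $j\neq k$. Every finitely supported sequence is smoothing, and a smoothing symbol is classical of order $m$ for every $m$ (take the quasihomogeneous resolution all of whose components are $0$, which lies in $QS^{m-j}_\C(\Z^n)$ trivially); hence $\eta_k\in CS^m_\C(\Z^n)$. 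Finally $T_{-k}\eta_0=\eta_k$, since $(T_{-k}\eta_0)(j)=\eta_0(j-k)$.

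The argument then runs as follows. Put $c:=\lambda(\eta_0)$. By $\Z^n$--translation invariance, $\lambda(\eta_k)=\lambda(T_{-k}\eta_0)=\lambda(\eta_0)=c$ for all $k$. Given $\sg\in CS^m_\C(\Z^n)$, the truncations $\sg_N:=\sum_{|k|\leq N}\sg(k)\,\eta_k$ are finitely supported, hence belong to $CS^m_\C(\Z^n)$, and $\sg_N\to\sg$ in the $\ell^1$--norm because $\sg\in\ell^1(\Z^n)$. By linearity $\lambda(\sg_N)=\sum_{|k|\leq N}\sg(k)\,\lambda(\eta_k)=c\sum_{|k|\leq N}\sg(k)$, and letting $N\to\infty$ using the $\ell^1$--continuity of $\lambda$ gives $\lambda(\sg)=c\sum_{k\in\Z^n}\sg(k)=c\,({\textstyle\sum_{\Z^n}})(\sg)$. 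Thus $\lambda=c\sum_{\Z^n}$, which is the assertion.

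I do not expect a genuine obstacle. The only points needing (routine) care are the two preliminary observations — that finitely supported sequences belong to $CS^m_\C(\Z^n)$, which is just a matter of unwinding the definitions of smoothing and classical symbols, and that the truncations converge in the $\ell^1$ topology, which is immediate once $\Real(m)<-n$. All the real content sits in the trivial identity $T_{-k}\eta_0=\eta_k$ combined with the $\ell^1$--continuity that is hypothesised; if that continuity assumption were dropped one would instead have to contend with the much larger space of $\Z^n$--translation invariant functionals arising from nonprincipal means on $\ell^\infty(\Z^n)$, which is precisely why the hypothesis is imposed.
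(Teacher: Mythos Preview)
Your proof is correct and is essentially the same as the paper's: truncate $\sg$ to finitely supported $\sg_N$, use $\Z^n$--translation invariance to reduce $\lambda(\sg_N)$ to $\lambda(\eta_0)\sum_{|k|\leq N}\sg(k)$, and pass to the limit by $\ell^1$--continuity. You supply a bit more justification (that the Dirac sequences lie in $CS^m_\C(\Z^n)$ and that $\sg_N\to\sg$ in $\ell^1$) than the paper does, but the argument is identical.
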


\begin{proof} 
Given a symbol $\sigma\in CS_\C^m(\Z^n)$ with $\Real(m)<-n$, we define for all $N\in \N$, $\sigma_N := \sum_{k\in [-N, N]^n\cap \Z^n}\sigma(k)\delta_{k}$, where $\delta_{k}$ is the function: $\delta_k:p\mapsto\delta_{k,p}$, which maps a point $p$ to the Kronecker delta function $\delta_{k,p}$. By linearity and translation invariance, $\lambda(\sigma_N)=\la(\delta_0)\sum_{k\in [-N, N]^n\cap \Z^n}\sigma(k)$. 
By $\ell^1$--continuity, taking the limit as $N$ goes to infinity yields the result.
\end{proof}

For a normalised extension map $e$, we have $\sum_{\Z^n}\sigma=\int_{\R^n} e(\sigma)$ for any $\sigma$ in $CS^{<-n}_\C(\Z^n)$ (see Definition \ref{extensionmaps}), which motivates the following definition. Recall from Section \ref{subsec:linear forms on symbols} that ${\textstyle\cutoffsum_{\Z^n}}$ is the discrete cut--off sum which defines a $\Z^n$--translation invariant linear extension on $CS^{\notin \Z_n}_\C(\R^n)$ and which extends the ordinary discrete sum $\sum_{\Z^n}$.

The following theorem yields a characterisation of $\Z^n$--translation invariant $\ell^1$--continuous linear forms  on non--integer order toroidal symbols, which is new to our knowledge.

\begin{thm}
\label{thm:characressum}
(i) The \emph{toroidal canonical discrete sum}, defined on $CS^{\notin \Z_n}_\C(\Z^n)$  as 
\begin{equation}
\label{discretesum}
 \cutoffsumtor:=\cutoffint_{\R^n} \circ\, e \index{$\cutoffsumtor$}
\end{equation}
where $e$ is a normalised extension map, is independent of the choice of $e$. 
It is a $\Z^n$--translation invariant $\ell^1$--continuous linear form (Definitions \ref{ell 1-continuous} and \ref{def:trans-closed}) on $CS^{\notin \Z_n}_\C(\Z^n)$ and coincides with the usual summation on symbols whose order has real part $<-n$.
\medskip

(ii) Let $m\in \C\setminus \Z_n$. Any $\Z^n$--translation invariant $\ell^1$--continuous linear form on the space $CS^m_\C(\Z^n)$ is proportional to the toroidal canonical discrete sum $\cutoffsumtor$. 

Consequently, any $\Z^n$--translation invariant $\ell^1$--continuous linear form on $CS^{\notin \Z}_\C(\Z^n)$ is proportional to the toroidal canonical discrete sum $\cutoffsumtor$. In particular we have 
\begin{equation}
\label{eq:torsumdissum}
 \cutoffsumtor={\textstyle\cutoffsum_{\Z^n}} \circ\, e
\end{equation}
where $e$ is any (non necessarily normalised) extension map.
\end{thm}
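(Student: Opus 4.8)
The plan is to transfer the known properties of the Euclidean canonical integral $\cutoffint_{\R^n}$ to the noncommutative torus through an extension map, in the spirit of Theorem~\ref{thm:charres}, and then to deduce the uniqueness statements from Lemma~\ref{lem:extendedlinearforms}, Theorem~\ref{thm:charres} and Proposition~\ref{prop:charresRn}, the last being imported to the toroidal setting via Lemma~\ref{lem:singext}.

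\emph{Part (i).} First I would check that $\cutoffsumtor$ does not depend on the normalised extension map. If $e,e'$ are normalised extension maps then $e(\sg)-e'(\sg)\in S^{-\infty}_\C(\R^n)$ for every $\sg$ by Lemma~\ref{lem:extensionmap}$(iii)$, so $\mu:=\cutoffint_{\R^n}\circ e-\cutoffint_{\R^n}\circ e'$ is a well defined linear form on $CS^{\notin\Z_n}_\C(\Z^n)$. It is $\Z^n$--translation invariant, because $\cutoffint_{\R^n}$ is $\R^n$--translation invariant on $CS^{\notin\Z_n}_\C(\R^n)$ and $e,e'$ commute with translations, and it vanishes on symbols of order $<-n$, because on those $\cutoffint_{\R^n}$ is the Lebesgue integral and \eqref{eq:normalised} gives $\int_{\R^n}e(\sg)=\sum_{\Z^n}\sg=\int_{\R^n}e'(\sg)$; hence $\mu$ is exotic. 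By Lemma~\ref{lem:singext}$(iii)$ the transported form $\wt\mu:\tau\mapsto\mu(\tau_{|\Z^n})$ on $CS^{\notin\Z_n}_\C(\R^n)$ is still exotic and $\Z^n$--translation invariant, so $\wt\mu=0$ by Proposition~\ref{prop:charresRn}$(iii)$, and therefore $\mu=\wt\mu\circ e=0$. The remaining assertions then follow: the $\Z^n$--translation invariance of $\cutoffsumtor$ is proved exactly as above, and for $\sg$ of order $<-n$ one has $\cutoffsumtor(\sg)=\int_{\R^n}e(\sg)=\sum_{\Z^n}\sg$ by \eqref{eq:normalised}, which gives both the coincidence with $\sum_{\Z^n}$ on $CS^{<-n}_\C(\Z^n)$ and, through $|\sum_{\Z^n}\sg|\le\|\sg\|_{\ell^1(\Z^n,\C)}$, the $\ell^1$--continuity.

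\emph{Part (ii) and its consequences.} Let $\lambda$ be a $\Z^n$--translation invariant $\ell^1$--continuous linear form on $CS^m_\C(\Z^n)$ with $m\notin\Z_n$. When $\Real(m)<-n$ the space $CS^m_\C(\Z^n)$ sits inside $\ell^1(\Z^n,\C)$ and Lemma~\ref{lem:extendedlinearforms} gives $\lambda=c\,\sum_{\Z^n}=c\,\cutoffsumtor$ at once. When $\Real(m)\ge-n$ (so $m\notin\Z$), I would apply Lemma~\ref{lem:extendedlinearforms} to the restriction of $\lambda$ to the subspace of $CS^m_\C(\Z^n)$ of symbols of order $<-n$ (again $\Z^n$--translation invariant and $\ell^1$--continuous) to obtain $c$ with $\lambda=c\,\cutoffsumtor$ on that subspace; then $\lambda-c\,\cutoffsumtor$ is $\Z^n$--translation invariant and vanishes on symbols of order $<-n$, i.e.\ is exotic, so Theorem~\ref{thm:charres}$(ii)$ pins it down as a leading symbol form. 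Passing to the whole non--integer order class, if $\lambda$ is defined on all of $CS^{\notin\Z}_\C(\Z^n)$ then the constant $c$ is the same at every order (it is read off on one symbol of order $<-n$ with nonzero sum) and the argument of Part $(i)$ applies verbatim to $\lambda-c\,\cutoffsumtor$: being exotic and $\Z^n$--translation invariant it is annihilated, after transport to $\R^n$, by Proposition~\ref{prop:charresRn}$(iii)$, so $\lambda=c\,\cutoffsumtor$. Finally, for any (not necessarily normalised) extension map $e$, the form $\cutoffsum_{\Z^n}\circ e$ is $\Z^n$--translation invariant (as $\cutoffsum_{\Z^n}$ is on $CS^{\notin\Z_n}_\C(\R^n)$) and $\ell^1$--continuous, and on $CS^{<-n}_\C(\Z^n)$ it equals $\sum_{\Z^n}$ because there $\cutoffsum_{\Z^n}$ reduces to the ordinary sum $\sum_{k\in\Z^n}$ and $e(\sg)_{|\Z^n}=\sg$ forces $\sum_{k\in\Z^n}e(\sg)(k)=\sum_{k\in\Z^n}\sg(k)$; by the uniqueness just established it is proportional to $\cutoffsumtor$, with constant $1$ since both agree with $\sum_{\Z^n}$ on order $<-n$ symbols, which is \eqref{eq:torsumdissum}.

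\emph{Main obstacle.} The substantive analysis is already contained in the preparatory results --- Proposition~\ref{prop:charresRn} and its toroidal transfer via Lemma~\ref{lem:singext}, together with Lemma~\ref{lem:extendedlinearforms} and Theorem~\ref{thm:charres} --- so the work here is largely organisational. The delicate point is the case $\Real(m)\ge-n$: one must separate cleanly the part of $\lambda$ already visible on $\ell^1$ symbols, governed by Lemma~\ref{lem:extendedlinearforms}, from the exotic remainder, governed by Theorem~\ref{thm:charres}, and then observe that passing from a fixed order to the full class $CS^{\notin\Z}_\C(\Z^n)$ forces the exotic remainder to vanish, leaving only the multiple of $\cutoffsumtor$.
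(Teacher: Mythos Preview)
Your approach mirrors the paper's almost step for step: for Part~$(i)$ you both show independence of $e$ by checking that the difference of two candidate sums is exotic and $\Z^n$--translation invariant on $CS^{\notin\Z_n}_\C(\Z^n)$ and then kill it via Proposition~\ref{prop:charresRn}$(iii)$ (you make the transport to $\R^n$ through Lemma~\ref{lem:singext}$(iii)$ explicit, whereas the paper invokes Proposition~\ref{prop:charresRn}$(iii)$ directly); for Part~$(ii)$ you both split off the constant $c$ using Lemma~\ref{lem:extendedlinearforms} on the low--order subspace and then identify the exotic remainder $\lambda-c\,\cutoffsumtor$ as a leading symbol form. Your derivation of~\eqref{eq:torsumdissum} is likewise essentially the paper's.

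There is, however, a gap in your treatment of the \emph{first} assertion of Part~$(ii)$ when $\Real(m)\geq -n$. Having shown that $\lambda-c\,\cutoffsumtor$ is a leading symbol form on $CS^m_\C(\Z^n)$, you immediately ``pass to the whole non--integer order class'' without ever concluding that this leading symbol contribution vanishes for \emph{fixed}~$m$. Your ``main obstacle'' paragraph says so explicitly: you force the exotic remainder to vanish only upon passage to variable order. That proves the ``Consequently'' clause but not the preceding sentence, which asserts that $\lambda$ is already proportional to $\cutoffsumtor$ on $CS^m_\C(\Z^n)$ itself. The paper inserts one more step at exactly this point: it uses the $\ell^1$--continuity of $\lambda$ a second time to rule out the leading symbol form (cf.\ Remark~\ref{rk:leading symbol singular not exotic}, where the paper records that leading symbol forms are not $\ell^1$--continuous, citing Lemma~\ref{lem:extendedlinearforms}). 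You need to supply this argument --- or an alternative one --- to close the fixed--order statement before moving on to the variable--order consequence.
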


\begin{proof}
$(i)$ The fact that $ \cutoffsumtor$ coincides with the summation over $\Z^n$ on symbols whose order has real part $<-n$ follows from the fact that $e$ is a normalised extension map (Definition \ref{extensionmaps}). In particular, $\cutoffsumtor$ is $\ell^1$--continuous (Definition \ref{ell 1-continuous}). Moreover, $\cutoffsumtor$ is $\Z^n$--translation invariant as a consequence of the $\Z^n$--translation invariance of the cut--off integral (\ref{eq:cutoff}) on non--integer order symbols on $\R^n$ combined with Lemma \ref{lem:singular and tranlation inv}. 
To check that it does not depend on the choice of $e$, let $e$, $e'$ be two normalised extension maps and define
\[
 \la_0:= \cutoffint_{\R^n} \circ\, e  - \cutoffint_{\R^n}  \circ\, e' .
\] 
It is clear from what precedes that the restriction of $\la_0$ to classical symbols of order in $\Z\cap ]-\infty,-n[$ is zero. Moreover, $\la:=(\la_0)_{|CS^{\notin \Z}_\C(\Z^n)}$ 
is an exotic $\Z^n$--translation invariant linear form on $CS^{\notin \Z}_\C(\Z^n)$. By Proposition \ref{prop:charresRn} $(iii)$, $\la$ vanishes and hence
\[
 \cutoffint_{\R^n}\circ\, e  = \cutoffint_{\R^n} \circ\, e' .
\] 

$(ii)$ Let $m\in \C\setminus \Z_n$. Any $\Z^n$--translation invariant $\ell^ 1$--continuous linear form $\lambda$ on $CS^m_\C(\Z^n)$ restricts to a $\Z^n$--translation invariant $\ell^1$--continuous linear form on the space $ CS^{m-[m]-n-1}_\C(\Z^n)=CS_\C^m(\Z^n)\cap CS^{<-n}_\C(\Z^n)$. By Lemma \ref{lem:extendedlinearforms} this restriction is proportional to the standard summation $\sum_{\Z^n}$, hence the existence of a constant $C$ such that $\lambda_{\vert_{CS^{<-n}_\C(\Z^n)} }= C\sum_{\Z^n}$. The linear form $\lambda_1:= \lambda- C \, {\cutoffsumtor }_{|CS^m_\C(\Z^n)}$ is therefore an exotic $\Z^n$--translation invariant linear form on $CS^m_\C(\Z^n)$. As a consequence, its extension $\wt \la_1$ is an exotic $\Z^n$--translation invariant linear form on $CS^m_\C(\R^n)$.
By Proposition \ref{prop:charresRn} $(iii)$, $\wt \la_1$ is a leading symbol form, which implies that $\la_1$ is a leading symbol form. Therefore, $\lambda$ is a linear combination of a leading symbol form and the toroidal canonical discrete sum. Since $\lambda$ is $\ell^1$--continuous, it is actually proportional to the toroidal canonical discrete sum $\cutoffsumtor$.

Consequently, any $\Z^n$--translation invariant $\ell^1$--continuous linear form on $CS^{\notin \Z}_\C(\Z^n)$ is proportional to $\cutoffsumtor$. 

Let $e$ be a (non necessarily normalised) extension map $e$. From what we just proved and the $\Z^n$--translation invariance of the $\ell^ 1$--continuous linear form ${\textstyle\cutoffsum_{\Z^n}}\circ e$ on $CS^{\notin \Z_n}_\C(\Z^n)$ (which comes from the $\Z^n$--translation invariance of the linear form ${\textstyle\cutoffsum_{\Z^n}}$ on $CS^{\notin \Z_n}_\C(\R^n)$ \cite[Corollary 5.35]{P2}), it follows that there exists $\la\in \R$ such that $\cutoffsum \circ e= \la\ \cutoffsumtor$. Since for any symbol $\sg$ whose order has real part $<-n$, $\cutoffsum \circ e (\sg) = \sum_{k\in \Z^n} \sg(k) = \cutoffsumtor \sg$, it follows that $\la=1$, which proves 
Equation (\ref{eq:torsumdissum}).
\end{proof}

\begin{rk}
We could equally well have taken (\ref{eq:torsumdissum}) as a definition setting
\[ 
\cutoffsumtor={\textstyle\cutoffsum_{\Z^n}  } \circ\, e
\]
for any (non necessarily normalised) extension map $e$, and derived (\ref{discretesum}) as a property. 
\end{rk}

\section{Classification of traces on (noncommutative) toroidal symbols and operators}
\label{section: classification nct}

For fixed $m\in\C$, we consider traces on $CS^m_{\A_\th}(\Z^n)$ and $\Cl_{\th}^m(\T^n)$ in the sense of resp.\ $CS_{\A_\th}(\Z^n)$-- and $\Cl_{\th}(\T^n)$--traces (see Definition \ref{def:trace}).

\subsection{Main classification result}

The linear forms on symbols in $CS_{\C}(\Z^n)$ introduced in Section \ref{subsec:linear forms on symbols} induce traces on corresponding subsets of $CS_{\A_\th}(\Z^n)$ and of $\Cl_{\th}(\T^n)$. In this section we describe these traces and their classification.

\begin{rk}
\label{rk:reduction to symbols}
 Since $\Op_\th$ is a topological and algebraic isomorphism between $CS^m_{\A_\th}(\Z^n)$ and $\Cl^m_{\th}(\T^n)$ for all $m\in\C$ (see Proposition \ref{propquantisation} and Theorem \ref{thmcomp}), we can reduce the problem of the classification of traces on subsets of $\Cl_{\th}(\T^n)$, to the problem of the classification of traces on subsets of $CS_{\A_\th}(\Z^n)$.
\end{rk}

\begin{prop}
\label{prop:noncommutative residue trace}
 The linear form on $CS_{\A_\th}(\Z^n)$ defined by 
\[
 \res_\th (\sg):= \int_{\Sbb^{n-1}} \ol \t(e(\sg))_{[-n]} (\xi) \, dS(\xi) =\res \circ\, \ol \t \circ\, e (\sg) \index{$\res_\th$}
\]
is independent of the chosen $\ol \t$--compatible extension map $e:S_{\A_\th}(\Z^n)\to S_{\A_\th}(\R^n)$. It is an exotic (and hence singular) trace on $(CS_{\A_\th}(\Z^n),\circ_\th)$, called the \emph{symbolic noncommutative residue}. 

The linear form on $\Cl_\th(\T^n)$ defined by 
\[
 \Res_\th := \res_\th\circ \Op_\th^{-1} \index{$\Res_\th$}
\]
is an exotic trace on $\Cl_{\th}(\T^n)$, called the \emph{noncommutative residue}. 
\end{prop}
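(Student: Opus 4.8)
The plan is to establish three things: (a) that $\res_\th$ is independent of the choice of $\ol\t$--compatible extension map; (b) that it is exotic; and (c) that it vanishes on star--brackets $\{\sg,\tau\}_\th$. Since $\Op_\th$ is an algebraic isomorphism intertwining $\circ_\th$--brackets with operator commutators (Proposition \ref{propquantisation} and \eqref{bracketOp}), the corresponding statements for $\Res_\th=\res_\th\circ\Op_\th^{-1}$ on $\Cl_\th(\T^n)$ follow immediately, so the whole content is at the symbol level.

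For independence of $e$: if $e,e'$ are two $\ol\t$--compatible extension maps, then by Lemma \ref{lem:permut}$(i)$ the maps $e_\C:=\ol\t\circ e\circ\iota_\th$ and $e'_\C$ are ordinary extension maps $S_\C(\Z^n)\to S_\C(\R^n)$, and $\res\circ e\circ\ol\t = \res\circ e_\C\circ\ol\t$ (similarly for $e'$). By Lemma \ref{lem:extensionmap}$(iii)$, $e_\C-e'_\C$ lands in $S_\C^{-\infty}(\R^n)$, on which $\res$ vanishes; hence $\res\circ e_\C\circ\ol\t=\res\circ e'_\C\circ\ol\t$. (Equivalently, invoke that $\restor$ on $CS_\C(\Z^n)$ from Theorem \ref{thm:charres} is extension--map independent, then precompose with $\ol\t$.) Exoticity is then clear: if $\sg\in CS^m_{\A_\th}(\Z^n)$ with $\Real(m)<-n$ then $\ol\t(\sg)\in CS^m_\C(\Z^n)$ by Lemma \ref{lem:permut}$(ii)$, so $e(\ol\t(\sg))\in CS^m_\C(\R^n)$ has no homogeneous component of degree $-n$ and $\res$ of it vanishes.

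The main work is the traciality. Write $\la:=\res_\th$, a singular linear form on $CS^{m+m'}_{\A_\th}(\Z^n)$ for each pair $m,m'$; since $\la$ is singular we may work modulo smoothing symbols. First, $\la$ is closed: $\la\circ\ol\delta_j = \res\circ e\circ\ol\t\circ\ol\delta_j$, and $\ol\t\circ\ol\delta_j=\bar\t\circ\bar\delta_j$ sends $\sg$ to $k\mapsto\t(\delta_j\sg(k))=0$ because $\t\circ\delta_j=0$ on $\A_\th$; hence $\la\circ\ol\delta_j=0$. By Lemma \ref{lem:translationinv}$(i)$ (applied to each fixed total order), $\la$ factorises as $\la=\bar\la\circ\bar\t$ with $\bar\la:=\la\circ\iota_\th = \res\circ e_\C$ on $CS^{m+m'}_\C(\Z^n)$ — which is exactly $\restor$ up to the extension map, hence $\Z^n$--translation invariant by Theorem \ref{thm:charres}. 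Now use the key identity Lemma \ref{lem:translationinv}$(ii)$: for $\sg\in CS^{m+m'}_{\A_\th}(\Z^n)$ and $k\in\Z^n$,
\[
\la\bigl((T_k-I)(\sg)\bigr)=\la\bigl(\{\sg\circ_\th U_{-k},U_k\}_\th\bigr)-\la\bigl(\{\{\sg,U_k\}_\th,U_{-k}\}_\th\bigr).
\]
The left side vanishes since $\la=\bar\la\circ\bar\t$ with $\bar\la$ $\Z^n$--translation invariant and $\bar\t\circ T_k=T_k\circ\bar\t$. Thus $\la$ vanishes on every star--bracket of the special form $\{\rho,U_k\}_\th$. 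It remains to upgrade this to arbitrary brackets $\{\sg,\tau\}_\th$: using Corollary \ref{cor:bracketofsymbols} and Theorem \ref{thm:asympt}, one reduces (modulo smoothing terms, which $\la$ kills) to the case $\sg$, $\tau$ positively homogeneous and then, via the star--product asymptotics combined with $\la$'s closedness ($\la\circ\ol\delta^\alpha=0$) and the Leibniz formula, each term $\tfrac1{\alpha!}\bigl((\del_\xi^\alpha\sg)\,\ol\delta^\alpha\tau - (\del_\xi^\alpha\tau)\,\ol\delta^\alpha\sg\bigr)$ is handled by the $\{\cdot,U_k\}_\th$ case together with the known traciality of $\res$ on $CS_\C(\R^n)$ after applying $\ol\t$; the cross terms in $\bar\t$ involving $[U_l,U_k]$ drop by the computations \eqref{barteq} already used in Lemma \ref{lem:translationinv}.

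The step I expect to be the real obstacle is precisely this last passage from brackets against Weyl generators $U_k$ to \emph{all} star--brackets. One clean route: since $\{\sg,k_jU_0\}_\th=\bar\delta_j\sg$ by \eqref{bracketkj} and $\{\sg,U_{e_j}\}_\th$ is given by \eqref{bracketUj}, the symbols $k_jU_0$ and $U_{e_j}$ generate (in an asymptotic sense, via $\circ_\th$) enough of $CS_{\A_\th}(\Z^n)$ that vanishing on brackets with them, plus closedness and $\ol\t$--factorisation, forces vanishing on all brackets — but making this precise requires care about orders and convergence of the symbol asymptotics. The alternative, and probably safest, route is to prove directly that $\bar\la=\restor$ (up to normalisation) is a trace for the \emph{commutative} star--product on $CS_\C(\Z^n)$ — which is classical, $\restor=\res\circ e$ and $\res$ is the noncommutative residue — and then transfer through $\la=\bar\la\circ\bar\t$ using that $\bar\t(\{\sg,\tau\}_\th)$ differs from $\{\bar\t\sg,\bar\t\tau\}_{\text{comm}}$ only by terms $\bar\t(\sum_l \sg_l[U_l,\cdot])$--type corrections, which vanish under the sine--cancellation already exploited in \eqref{barteq}. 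I would present the argument along the second route, invoking Theorem \ref{thm:asympt}, Corollary \ref{cor:bracketofsymbols}, Lemma \ref{lem:translationinv} and the known traciality of $\res$, and flag the order bookkeeping as the only delicate point.
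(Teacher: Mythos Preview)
Your treatment of (a) independence of the extension map and (b) exoticity is fine and matches the paper. The problem is (c), the traciality.

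First, there is a logical slip. From Lemma~\ref{lem:translationinv}$(ii)$ and the vanishing of the left hand side you only obtain
\[
\la\bigl(\{\sg\circ_\th U_{-k},U_k\}_\th\bigr)=\la\bigl(\{\{\sg,U_k\}_\th,U_{-k}\}_\th\bigr),
\]
a \emph{relation} between two bracket expressions; it does not say that $\la(\{\rho,U_k\}_\th)=0$ for arbitrary $\rho$. So the sentence ``Thus $\la$ vanishes on every star--bracket of the special form $\{\rho,U_k\}_\th$'' is unjustified. Even if you patched this, the ``upgrade'' from brackets against $U_k$ to all star--brackets remains a sketch: neither of your two proposed routes is actually carried out, and the second one (comparing $\bar\t(\{\sg,\tau\}_\th)$ with a commutative bracket of $\bar\t(\sg)$ and $\bar\t(\tau)$) is not correct as stated, since $\bar\t$ does not intertwine $\circ_\th$ with any star--product on $CS_\C(\Z^n)$ in a way controlled merely by the $[U_l,U_k]$--type corrections.

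The paper's argument bypasses all of this. Using \eqref{eqasympt} from Corollary~\ref{cor:bracketofsymbols} together with the permutation identity \eqref{permutres}, one writes $\res_\th\{\sg,\sg'\}_\th$ as the finite sum
\[
\sum_{|\a|+i+i'=r+r'+n}\frac{1}{\a!}\int_{\Sbb^{n-1}}\ol\t\Bigl((\del_\xi^\a e(\sg)_{[r-i]})\,\ol\delta^\a e(\sg')_{[r'-i']}-(\del_\xi^\a e(\sg')_{[r'-i']})\,\ol\delta^\a e(\sg)_{[r-i]}\Bigr)\,dS,
\]
with all integrands positively homogeneous of degree $-n$. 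Now one integrates by parts directly, using two facts: Stokes' property on the sphere, i.e.\ $\int_{\Sbb^{n-1}}\del_{\xi_j} f\,dS=0$ whenever $f$ is positively homogeneous of degree $-n+1$; and the algebraic identities $\ol\t(\ol\delta_j(\rho)\rho')=-\ol\t(\rho\,\ol\delta_j(\rho'))$ (from $\t\circ\delta_j=0$) and $\ol\t(\rho\rho')=\ol\t(\rho'\rho)$ (from the traciality of $\t$ on $\A_\th$). Repeated integration by parts in $\xi$ and in $\delta$ then makes each term cancel against its partner. This is exactly the classical proof that the Wodzicki residue is a trace, with the pointwise trace $\ol\t$ playing the role of integration over the base. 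No reduction to brackets against Weyl generators is needed; I would rewrite your part (c) along these lines.
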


\begin{proof}
The second statement follows from the first one using the bijectivity of the map ${\rm Op}_\theta$ (see Remark \ref{rk:reduction to symbols}). Let us prove the first statement.  
Thanks to Lemma \ref{lem:permute extensions and traces}, Proposition \ref{prop:classical} and Lemma \ref{lem:permut} $(ii)$ we can permute the three operations $\ol \t$, $e$, and $(\cdot)_{[-n]}$ in $\res_\th$. This way, for a $\ol \t$--compatible extension map $e$, for any $\sg\in CS_{\A_\th}(\Z^n)$ we have
\begin{equation}
 \label{permutres}
 \res_{\th}(\sg) = \int_{\Sbb^{n-1}} \ol \t (e(\sg)_{[-n]}) \, dS
\end{equation}
where $e(\sg)_{[-n]} \in [e(\sg)]_{-n}$ is chosen in $HS^{-n}_{\A_\th}(\Z^n)$.

Since from Lemma \ref{lem:permute extensions and traces} we have $\res_\th=\res\, \circ\ e_\C\circ\, \ol \t=\restor\circ \, \ol \t$, the fact that it is an exotic linear form independent of the extension map $e$ chosen to define $\restor$, is a direct consequence of Corollary \ref{cor:step2} and Theorem \ref{thm:charres}. 
 
To prove that $\res_\th$ is a trace we use Stokes' property on the unit sphere as in the usual proof of the cyclicity of the noncommutative residue $\res$.
 
Let $\sg\in CS^r_{\A_\th}(\Z^n)$ and $\sg^\prime\in CS^{r^\prime}_{\A_\th}(\Z^n)$. By \eqref{eqasympt} and \eqref{permutres}, we have for a given extension map $e$,
\begin{align}
\label{eq:residuetrace}
\res_\th \set{\sg ,\sg^\prime}_\th = \sum_{|\a|+i+i'=r+r'+n} \frac{1}{\a!}\int_{\Sbb^{n-1}}  \ol \t\Big(&(\del_\xi^\a e(\sg)_{[r-i]}) \, \ol\delta^\a e(\sg^\prime)_{[r'-i']} \nonumber \\
&\ -(\del_\xi^\a e(\sg^\prime)_{[r'-i']})\, \ol\delta^\a e(\sg)_{[r -i]}\Big)\, dS\, ,
\end{align}
the sum being set to zero when $r+r'+n\notin \N$. Using Stokes' property on the unit sphere, namely the fact that $\int_{\S^{n-1}} \del_{\xi_j} f\, dS=0$, 
when $f$ is positively homogeneous of degree $-n+1$, we see that the result follows from several integrations by parts with respect to the variable $\xi$ in \eqref{eq:residuetrace}, and applications of the formulae $\ol \t (\ol \delta_j(\rho) \rho')=-\ol \t(\rho \ol\delta_j(\rho'))$ and 
$\ol \t(\rho \rho')=\ol \t(\rho'\rho)$, which are valid for all symbols $\rho,\rho'$ in $S_{\A_\th}(\R^n)$.
\end{proof}

\begin{prop}
\label{prop:canonical trace}
 The linear form on $CS^{\notin\Z_n}_{\A_\th}(\Z^n)$ defined by
\[
 {\textstyle\cutoffsum_\th }:=  \cutoffint_{\R^n} \circ\ \ol \t \circ e \index{${\textstyle\cutoffsum_\th }$}
\]
is independent of the choice of the normalised $\ol \t$--compatible extension map $e$. It is an $\ell^1$--continuous trace, called the \emph{canonical discrete sum}. This trace coincides with $\sum_{\Z^n} \circ\ \ol \t$ on symbols whose order has real part $<-n$. 

The linear form on $\Cl_\th^{\notin\Z_n}(\T^n)$ defined by 
\[
\TR_\th := {\textstyle\cutoffsum_\th}\circ \Op_\th^{-1} \index{$\TR_\th$}
\]
is an $\mathcal{L}^1$--continuous trace on $\Cl_{\th}^{\notin\Z_n}(\T^n)$, called the \emph{canonical trace}. This trace coincides with the operator trace on operators whose order has real part $<-n$.
\end{prop}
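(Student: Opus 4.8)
The plan is to prove the statement about the symbolic form $\cutoffsum_\th$ and to read the statement about $\TR_\th$ off it, treating separately the elementary properties (well--definedness, $\ell^1$--continuity, value below order $-n$) and the traciality, the latter modelled on the proof of Proposition \ref{prop:noncommutative residue trace}.

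\textbf{Reduction and elementary properties.} Since $\Op_\th$ is a topological and algebraic isomorphism from $CS^m_{\A_\th}(\Z^n)$ onto $\Cl^m_\th(\T^n)$ intertwining star--brackets with operator commutators (Proposition \ref{propquantisation} and \eqref{bracketOp}; Remark \ref{rk:reduction to symbols}), every assertion about $\TR_\th=\cutoffsum_\th\circ\Op_\th^{-1}$ is equivalent to the corresponding assertion about $\cutoffsum_\th$; in particular $\mathcal L^1$--continuity of $\TR_\th$ is by definition $\ell^1$--continuity of $\cutoffsum_\th$, while for an operator $A$ of order with real part $<-n$ Theorem \ref{thm:Sobolev}(ii) gives $\TR_\th(A)=\cutoffsum_\th(\sg_A)=\sum_{\Z^n}\ol\t(\sg_A)=\Tr A$. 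On the symbol side I would simply note $\cutoffsum_\th=\cutoffsumtor\circ\ol\t$ (the map $e$ entering the definition of $\cutoffsum_\th$ being a normalised extension map on $S_\C(\Z^n)$, so that $\cutoffint_{\R^n}\circ e=\cutoffsumtor$). Independence of the normalised extension map, $\ell^1$--continuity, and the identity $\cutoffsum_\th=\sum_{\Z^n}\circ\ol\t$ on symbols of order with real part $<-n$ then follow at once from the corresponding statements for $\cutoffsumtor$ in Theorem \ref{thm:characressum}(i), using that $\ol\t$ preserves orders (Definition \ref{defn:bar t}) and the elementary bound $|\t(a)|\le\norm a$.

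\textbf{Traciality.} The substance is to show $\cutoffsum_\th(\{\sg,\sg'\}_\th)=0$ whenever $\sg\in CS^r_{\A_\th}(\Z^n)$, $\sg'\in CS^{r'}_{\A_\th}(\Z^n)$ and $\{\sg,\sg'\}_\th\in CS^{\notin\Z_n}_{\A_\th}(\Z^n)$. If the order of $\{\sg,\sg'\}_\th$ has real part $<-n$, this is immediate from the previous paragraph: $\cutoffsum_\th(\{\sg,\sg'\}_\th)=\Tr[\Op_\th\sg,\Op_\th\sg']=0$, the operator trace vanishing on trace--class commutators. Otherwise the order is non--integer, hence so are the degrees of all its homogeneous components, and $r+r'\notin\Z$. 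Here I would follow the scheme of Proposition \ref{prop:noncommutative residue trace}: using the $\ol\t$--compatibility of $e$ and the commutation of $\ol\t$, $e$ and the extraction of homogeneous components (Lemmas \ref{lem:permut}, \ref{lem:classical}) together with Theorem \ref{thm:asympt}(i) and Corollary \ref{cor:bracketofsymbols}, one reduces to
\[
\cutoffsum_\th(\{\sg,\sg'\}_\th)=\cutoffint_{\R^n}\ol\t\bigl(e(\{\sg,\sg'\}_\th)\bigr),\qquad
e(\{\sg,\sg'\}_\th)\sim\sum_{\a\in\N^n}\tfrac{1}{\a!}\bigl(\del_\xi^\a e(\sg)\,\ol\delta^\a e(\sg')-\del_\xi^\a e(\sg')\,\ol\delta^\a e(\sg)\bigr).
\]
The $\a=0$ term is annihilated by $\ol\t$, which is a pointwise trace ($\ol\t(\rho\rho')=\ol\t(\rho'\rho)$, whence $\ol\t(PQ-QP)=0$). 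For $|\a|\ge1$ one integrates by parts: each $\ol\delta_j$ is a derivation with $\ol\t\circ\ol\delta_j=0$ — equivalently $\ol\t(\ol\delta_j(\rho)\rho')=-\ol\t(\rho\,\ol\delta_j(\rho'))$, which is precisely where $\t\circ\delta_j=0$ enters — so that, combined with the cyclicity $\ol\t(\rho\rho')=\ol\t(\rho'\rho)$, the symbol $\ol\t\bigl(\del_\xi^\a e(\sg)\,\ol\delta^\a e(\sg')-\del_\xi^\a e(\sg')\,\ol\delta^\a e(\sg)\bigr)$ becomes, modulo a finite sum of $\del_{\xi_i}$--derivatives of classical scalar symbols of non--integer order, equal to $0$; those $\del_{\xi_i}$--derivatives are then killed by $\cutoffint_{\R^n}$ thanks to Stokes' property of the canonical integral on non--integer order symbols (\cite{P2}), exactly as the integrations by parts on $\Sbb^{n-1}$ are used in Proposition \ref{prop:noncommutative residue trace}.

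\textbf{The main obstacle.} The delicate point — and where the argument genuinely departs from the residue case — is that the residue only reads off the $[-n]$--homogeneous component, on which the asymptotic expansion is a finite sum and Stokes on the sphere applies verbatim, whereas $\cutoffsum_\th$ involves the whole symbol and $\cutoffint_{\R^n}$ is not insensitive to smoothing perturbations (it restricts to the ordinary Lebesgue integral on smoothing symbols). I expect the core of the proof to lie in making the above bookkeeping exact: using the Taylor formula with integral remainder underlying Theorem \ref{thm:asympt}(i), one has for every $N$ the \emph{exact} identity $\{\sg,\sg'\}_\th=(\sg\sg'-\sg'\sg)+\sum_{1\le|\a|\le N}\tfrac{1}{\a!}\bigl((\del_\xi^\a e(\sg))_{|\Z^n}\,\ol\delta^\a\sg'-(\del_\xi^\a e(\sg'))_{|\Z^n}\,\ol\delta^\a\sg\bigr)+(R_N-R'_N)$ with $R_N-R'_N$ of order $r+r'-N-1$; applying $\cutoffsum_\th=\cutoffsumtor\circ\ol\t$, the $\a=0$ term and each of the finitely many terms in the sum contribute $0$ by the identities above (their images under $\cutoffsumtor$ being cut--off integrals of $\del_{\xi_i}$--derivatives of non--integer order symbols), and for $N$ large the tail $R_N-R'_N$ has order $<-n$ and is disposed of by the trace--class case. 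Keeping track, along the way, of the smoothing discrepancy between the extension of a product of restricted symbols and the product of the corresponding extensions — smoothing but with a priori nonzero cut--off integral — so that it does not spoil the cancellation (here the choice of a \emph{normalised} extension map is essential, together with the identity $\cutoffsum_\th=\Tr\circ\Op_\th$ below order $-n$ and the vanishing of the operator trace on trace--class commutators) is, I anticipate, the most technical part of the argument.
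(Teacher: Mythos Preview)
Your approach matches the paper's: the same reduction to symbols via $\Op_\th$, the same derivation of the elementary properties from $\cutoffsum_\th=\cutoffsumtor\circ\ol\t$ and Theorem~\ref{thm:characressum}(i), and the same traciality mechanism via the asymptotic expansion of Theorem~\ref{thm:asympt}/Corollary~\ref{cor:bracketofsymbols} followed by integration by parts in $\xi$ and in $\ol\delta$. The paper compresses what you spell out (Stokes' property of $\cutoffint_{\R^n}$ on non--integer order symbols, together with $\ol\t(\ol\delta_j\rho\cdot\rho')=-\ol\t(\rho\,\ol\delta_j\rho')$ and $\ol\t(\rho\rho')=\ol\t(\rho'\rho)$) into a single sentence invoking ``the commutative case'' and the vanishing of the finite--part integral of homogeneous terms of non--integer degree.

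The obstacle you flag --- that the expansion is only $\sim$ while $\cutoffint_{\R^n}$ does not annihilate smoothing symbols --- is genuine, and the paper's one--line argument does not address it either (it simply writes $\cutoffsum_\th\{\sg,\sg'\}_\th\sim\sum_\a\cdots$ and declares the right--hand side zero). Your exact--Taylor--plus--trace--class--tail fix is sound in spirit; the delicate step, as you anticipate, is that the middle terms $T_\a=(\del_\xi^\a e(\sg))_{|\Z^n}\ol\delta^\a\sg'-(\del_\xi^\a e(\sg'))_{|\Z^n}\ol\delta^\a\sg$ still incur a smoothing discrepancy when re--extended, so Stokes for $\cutoffint_{\R^n}$ does not apply directly to $\cutoffsumtor(\ol\t(T_\a))$. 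One way to bypass this altogether: from the explicit formula~\eqref{starproduct} one checks $\ol\t(\{\sg,\sg'\}_\th)=\sum_{l\in\Z^n}(I-T_{-l})f_l$ with $f_l:=\sg'_l\cdot T_l\sg_{-l}\in CS^{r+r'}_\C(\Z^n)$ (using Lemma~\ref{lem:fouriercoeff} and $c(l,-l)=1$); each summand is killed by the $\Z^n$--translation invariance of $\cutoffsumtor$ already established, and the rapid decay in $l$ handles the summation.
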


\begin{proof} 
The second statement follows from the first one using the bijectivity of the map ${\Op}_\theta$ (see Remark \ref{rk:reduction to symbols}). Let us prove the first statement. 
Thanks to Lemma \ref{lem:permute extensions and traces}, for a $\ol \t$--compatible extension map $e$ we have
\begin{equation}
 \label{permutcutoff}
 {\textstyle\cutoffsum_\th }= \cutoffint_{\R^n} \circ\, e_\C \circ\ \ol \t .
\end{equation} 
By definition if $e$ is normalised, $e_\C$ is also normalised, and therefore $\cutoffsum_\th=\cutoffsumtor\circ \, \ol \t$.
By Theorem \ref{thm:characressum} $(i)$ and Corollary \ref{cor:step2}, the linear form $\cutoffsum_\th=\cutoffsumtor\circ \, \ol \t$ is independent of the normalised extension map $e$ chosen to define $\cutoffsumtor$, and it is an $\ell^1$--continuous linear form. Indeed, on symbols whose order has real part $<-n$, it coincides with $\int_{\R^n} \circ\, \ol \t \circ\ e$. Similarly, since  ${\TR}_\theta={\textstyle\cutoffsum_\th}\circ \Op_\th^{-1}={\textstyle\cutoffsumtor} \circ\ \ol \t\circ \Op_\th^{-1}$, Theorem \ref{thm:Sobolev} implies that $\TR_\th$ and the ordinary trace coincide on operators whose order has real part $<-n$.

We now prove that $ {\textstyle\cutoffsum_\th}$ is a trace. Let $\sg,\sg^\prime$ in $CS_{\A_\th}(\Z^n)$ be such that $\set{\sg , \sg^\prime}_\th\in CS_{\A_\th}^{\notin\Z_n}(\Z^n)$, then by Theorem \ref{thm:asympt}
\begin{equation}
 {\textstyle\cutoffsum_\th} \set{\sg, \sg^\prime}_\th \sim  
 \sum_{\a\in \N^n} \frac{1}{\a !}\cutoffint_{\R^n} \ol \t\Big((\del_\xi^\a e(\sg)) \, \bar\delta^{\a} e(\sg^\prime)-(\del_\xi^\a e(\tau)) \, \bar\delta^{\a} e(\sg)\Big).
\end{equation}
As in the commutative case, this term vanishes since the finite part of the integral over a ball of radius sufficiently large of homogeneous terms of non--integer degree vanishes. 
\end{proof}

\begin{prop}
\label{leading symbol trace}
Given a linear map $L:QS^m_\C(\R^n)\to\C$, the linear form on $CS^m_{\A_\th}(\Z^n)$ defined by 
\[
 L\circ\, e\circ \, \ol \t\, ((\cdot)_{[m]})
\]
is a singular trace on $CS^m_{\A_\th}(\Z^n)$ called a \emph{leading symbol trace} on $CS^m_{\A_\th}(\Z^n)$.

The linear form on $\Cl_{\th}^m(\T^n)$ defined by 
\[
 L\circ\, e\circ \, \ol \t\, (\Op_\th^{-1}(\cdot)_{[m]})
\]
is a singular trace on $\Cl_{\th}^m(\T^n)$ called a \emph{leading symbol trace} on $\Cl_{\th}^m(\T^n)$.
\end{prop}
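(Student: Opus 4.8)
The plan is to push everything down to the symbol level: the statement about $\Cl_\th^m(\T^n)$ is an immediate consequence of the one about $CS^m_{\A_\th}(\Z^n)$ since, by Proposition~\ref{propquantisation} and \eqref{bracketOp} (see Remark~\ref{rk:reduction to symbols}), $\Op_\th$ is an algebraic and topological isomorphism $CS^m_{\A_\th}(\Z^n)\to\Cl^m_\th(\T^n)$ carrying star--brackets to operator commutators. Write $\lambda_L:=L\circ e\circ\ol\t\,((\cdot)_{[m]})$ for the candidate symbolic leading symbol trace and $\ell_L:=L\circ e\circ(\cdot)_{[m]}$ for the (already available) leading symbol form on $CS^m_\C(\Z^n)$ recalled after Definition~\ref{defn:leadingsymbolform}. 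The first thing I would check is the factorisation $\lambda_L=\ell_L\circ\ol\t$ on $CS^m_{\A_\th}(\Z^n)$: by Lemma~\ref{lem:permut}$(i)$ the extension map $e$ commutes with $\ol\t$ for a $\ol\t$--compatible $e$, and by Lemma~\ref{lem:permut}$(iii)$ taking pointwise traces commutes modulo smoothing with taking quasihomogeneous parts, so $e(\ol\t(\sg_{[m]}))\sim e((\ol\t\sg)_{[m]})$; since $\ell_L$ depends only on the class modulo smoothing, the two sides coincide. Well--definedness, independence of the choice of $e$, and singularity of $\lambda_L$ then follow from the corresponding (already established) properties of $\ell_L$ together with the facts that $\ol\t$ maps $CS^m_{\A_\th}(\Z^n)$ continuously into $CS^m_\C(\Z^n)$ (Lemma~\ref{lem:permut}$(ii)$, Definition~\ref{defn:bar t}) and $S^{-\infty}_{\A_\th}(\Z^n)$ into $S^{-\infty}_\C(\Z^n)$.

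Before attacking the trace property I would record two features of $\lambda_L$ that encode the tracial content. It is \emph{closed}: $\ol\t\circ\ol\delta_j=0$ since $\t\circ\delta_j=0$ on $\A_\th$ (emphasised in Section~\ref{secprem}), hence $\lambda_L\circ\ol\delta_j=\ell_L\circ\ol\t\circ\ol\delta_j=0$. It is \emph{$\Z^n$--translation invariant}: $\ol\t$ commutes with the translations $T_l$, while $\ell_L(T_l\tau)=\ell_L(\tau)$ on $CS^m_\C(\Z^n)$ because a Taylor expansion of $\xi\mapsto e(\tau)(\xi+l)$ shows that $T_l$ fixes the leading quasihomogeneous part $(\tau)_{[m]}$ modulo smoothing.

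For the trace property, take $\sg\in CS^r_{\A_\th}(\Z^n)$, $\tau\in CS^{r'}_{\A_\th}(\Z^n)$ with $\set{\sg,\tau}_\th\in CS^m_{\A_\th}(\Z^n)$; I want $\ell_L(\ol\t(\set{\sg,\tau}_\th))=0$. If $\Real(r+r')<\Real(m)$, then $\set{\sg,\tau}_\th$ has order of real part $<\Real(m)$, so its degree--$m$ part vanishes and there is nothing to prove. If $r+r'=m$, Corollary~\ref{cor:bracketofsymbols} says that modulo smoothing the degree--$m$ part of $e(\set{\sg,\tau}_\th)$ is the pointwise $\A_\th$--commutator $e(\sg)_{[r]}e(\tau)_{[r']}-e(\tau)_{[r']}e(\sg)_{[r]}$; applying the pointwise trace $\ol\t$ and the traciality of $\t$ on $\A_\th$ annihilates it, so $\lambda_L(\set{\sg,\tau}_\th)=L(0)=0$. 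The remaining range $\Real(r+r')>\Real(m)$ is the delicate one: membership of $\set{\sg,\tau}_\th$ in $CS^m_{\A_\th}(\Z^n)$ forces the cancellation of all its homogeneous components of degree above $m$. Here I would exploit the identity, obtained by a direct computation from \eqref{starproduct} using $\t(aU_l)=a_{-l}$,
\[
\ol\t(\set{\sg,\tau}_\th)=\sum_{l\in\Z^n}(T_l-I)\,\psi_l,\qquad \psi_l:=\sg_{-l}\,(T_{-l}\tau_l),
\]
which presents $\ol\t$ of any star--bracket as an absolutely summable (by Lemma~\ref{lem:fouriercoeff}) series of $\Z^n$--translation coboundaries, and combine it with the translation invariance of $\ell_L$ via an induction on $\Real(r+r')-\Real(m)$: at each stage peel off the leading quasihomogeneous parts of $\sg$ and $\tau$ — which commute pointwise because the leading part of the bracket cancels — so that the lower--order remainders again enter star--brackets whose $\ol\t$--images are coboundaries of strictly lower order, eventually landing in the base case $r+r'=m$.

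The step I expect to be the main obstacle is exactly this last one. The leading symbol form $\ell_L$ is translation invariant only on symbols of the \emph{fixed} order $m$, whereas the $\psi_l$ above generically have strictly larger order, and $\ell_L$ need not be continuous, so the reduction cannot be carried out by simply passing $\ell_L$ through the infinite coboundary sum; it has to be organised order by order (the induction above, or a finite--part truncation of the sum). Note that the Stokes--on--the--sphere mechanism that disposes of the $\del_\xi$--terms in the analogous computation for the residue (Proposition~\ref{prop:noncommutative residue trace}) is not available here, since no integration over a sphere intervenes in a leading symbol trace.
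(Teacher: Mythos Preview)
Your proposal is considerably more elaborate than the paper's argument, and also more scrupulous about the case distinction. The paper's proof is essentially a one--line observation: for $\sg\in CS^{r}_{\A_\th}(\Z^n)$ and $\tau\in CS^{r'}_{\A_\th}(\Z^n)$, the leading homogeneous term of $\set{\sg,\tau}_\th$ is the pointwise $\A_\th$--commutator $e(\sg)_{[r]}e(\tau)_{[r']}-e(\tau)_{[r']}e(\sg)_{[r]}$ (Theorem~\ref{thm:asympt} and Corollary~\ref{cor:bracketofsymbols}), and since $\t$ is a trace on $\A_\th$ for the pointwise product (Remark~\ref{rk:bar t a trace}) this term is annihilated by $\ol\t$, so that $\ol\t(\set{\sg,\tau}_\th)\in CS^{r+r'-1}_\C(\Z^n)$. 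The paper then concludes that the degree--$m$ part vanishes. This is precisely your ``base case'' $r+r'=m$: in that situation $\ol\t$ of the bracket lies in $CS^{m-1}_\C(\Z^n)$, hence has trivial degree--$m$ component, and $\lambda_L$ vanishes on it. None of your additional machinery --- closedness, $\Z^n$--translation invariance, the coboundary identity $\ol\t(\set{\sg,\tau}_\th)=\sum_l(T_l-I)\psi_l$, or the induction --- appears in the paper's argument.

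You are right, however, to isolate the range $\Real(r+r')>\Real(m)$ as the delicate one: the paper's proof, read literally, only draws the conclusion when the bracket has generic order (their notation reuses $m$ for the order of $\sg$, and the conclusion ``degree--$m$ part vanishes'' follows from $\ol\t(\set{\sg,\tau}_\th)\in CS^{m+m'-1}_\C$ only when $m'\le 0$). Your induction sketch points in a reasonable direction, but as you yourself note it is not complete: peeling off leading parts does not keep the individual summand brackets inside $CS^m_{\A_\th}(\Z^n)$, and $\ell_L$ is not translation invariant on $CS^{m+k}_\C(\Z^n)$ for $k>0$ (e.g.\ the degree--$m$ part of $T_{e_1}(\xi_1^{m+1})$ is $(m+1)\xi_1^m\neq 0$), so the coboundary identity cannot be exploited termwise. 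In short: your proposal recovers (with more structure) exactly what the paper proves, and correctly flags a case the paper's short argument does not explicitly cover; neither text resolves that case in full.
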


\begin{proof} 
By Theorem \ref{thm:asympt} (see also Remark \ref{rk:order star bracket}), for any $\sg\in CS^m_{\A_\th}(\Z^n)$ and $\tau\in CS_{\A_\th}^{m'}(\Z^n)$, the commutator $\set{\sg,\tau}_\th$ belongs to the space $CS^{m+m'}_{\A_\th}(\Z^n)$, and moreover by Remark \ref{rk:bar t a trace},
\[
 \ol \t \left(\set{\sg,\tau}_\th\right) \in CS^{m+m'-1}_{\C}(\Z^n)
\]
so the homogeneous term of degree $m$ in any positively quasihomogeneous resolution of $\ol \t \left(\set{\sg,\tau}_\th\right)$ vanishes. A leading symbol form $L\circ\, e\circ \, \ol \t ((\cdot)_{[m]})$ therefore vanishes on star--brackets of symbols. It follows from \eqref{bracketOp} that the leading symbol form $ L\circ\, e\circ \, \ol \t (\Op_\th^{-1}(\cdot)_{[m]})$ vanishes on operator brackets. This justifies calling these linear forms leading symbol traces on their respective algebras.
\end{proof}

\begin{rk}
\label{rk:leading symbol singular not exotic}
Leading symbol traces on $\Cl_{\th}^m(\T^n)$ with $\Real(m)<-n$ provide examples of traces that are singular but not exotic since they do not vanish on the set $\Cl_{\th}^m(\T^n)$ whose elements are trace--class operators.
By Lemma \ref{lem:extendedlinearforms}, leading symbol traces are not $\ell^1$-- (resp.\ $\mathcal{L}^1$)--continuous for symbols (resp.\ for operators), hence the fact that under an $\ell^1$-- (resp.\ $\mathcal{L}^1$)--continuity assumption, these traces are ruled out of the classification result below.
\end{rk}

We now state our main result which is the noncommutative toroidal generalisation of known characterisations of traces on classical pseudodifferential operators on $\T^n$ (for general closed manifolds see \cite{LN-J,N-J} for the case of fixed order, and \cite{P2, Sc} for an overview).
 
\begin{thm}
\label{mainthm1} Let $m$ be a complex number.
\begin{enumerate}
\item If $m\in \Z_n$, any exotic trace on $CS_{\A_\th}^m(\Z^n)$ is a linear combination of a leading symbol trace and the restriction of $\res_\th$ to $CS_{\A_\th}^m(\Z^n)$.
 
Consequently, any exotic trace on $CS_{\A_\th}^{\Z_n}(\Z^n)$ is proportional to the restriction of $\res_\th$ to $CS_{\A_\th}^{\Z_n}(\Z^n)$.  

\item If $m\in \Z_n$, any exotic trace on  $\Cl_\th^m(\T^n)$ is a linear combination of a leading symbol trace and the restriction of $\Res_\th$ to $Cl_{\th}^m(\T^n)$. 

Consequently, any exotic trace on $\Cl_\th^{\Z_n}(\T^n)$ is proportional to the restriction of $\Res_\th$ to $\Cl_\th^{\Z_n}(\T^n)$. 

\item If $m\notin \Z_n$, any $\ell^1$--continuous trace on $CS^{m}_{\A_\th}(\Z^n)$ is proportional to the restriction of\ \ ${\textstyle\cutoffsum_\th }$ to $CS^{m}_{\A_\th}(\Z^n)$. 

Consequently, any $\ell^1$--continuous trace on $CS^{\notin\Z}_{\A_\th}(\Z^n)$ is proportional to the restriction of\ \  ${\textstyle\cutoffsum_\th }$ to $CS^{\notin\Z}_{\A_\th}(\Z^n)$.

\item If $m\notin \Z_n$, any $\mathcal{L}^1$--continuous trace on $\Cl_\th^{m}(\T^n)$ is proportional to the restriction of $\TR_\th$ to $\Cl_\th^{m}(\T^n)$.  
  
Consequently, any $\mathcal{L}^1$--continuous trace on $\Cl_\th^{\notin\Z}(\T^n)$ is proportional to the restriction of $\TR_\th$ to $\Cl_\th^{\notin \Z}(\T^n)$.
\end{enumerate}
\end{thm}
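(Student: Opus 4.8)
The plan is to deduce the operator statements (2) and (4) from the symbol statements (1) and (3), and to obtain the latter by combining the commutative classification of $\Z^n$--translation invariant linear forms (Theorems \ref{thm:charres} and \ref{thm:characressum}) with the factorisation of traces through $\ol\t$ (Corollary \ref{cor:step2}).

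\textbf{Reduction to symbols.} By Proposition \ref{propquantisation} and \eqref{bracketOp}, $\Op_\th$ is an algebraic and topological isomorphism $CS^m_{\A_\th}(\Z^n)\to\Cl^m_\th(\T^n)$ carrying star--brackets to operator commutators; it therefore induces a bijection between traces on $CS^m_{\A_\th}(\Z^n)$ and traces on $\Cl^m_\th(\T^n)$, preserving the exotic property and the $\ell^1$--/$\mathcal L^1$--continuity property (Definitions \ref{exotic}, \ref{ell 1-continuous}). Since $\Res_\th=\res_\th\circ\Op_\th^{-1}$, $\TR_\th=\cutoffsum_\th\circ\Op_\th^{-1}$, and the leading symbol traces on operators are by construction the images of the leading symbol traces on symbols, statements (2) and (4) follow immediately from (1) and (3), so it suffices to argue on the symbol side.

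\textbf{From a trace to a translation invariant form downstairs, and back.} Given an exotic (resp.\ $\ell^1$--continuous) trace $\la$ on $CS^m_{\A_\th}(\Z^n)$, Corollary \ref{cor:step2} shows that $\la$ is closed and $\Z^n$--translation invariant, that it factorises as $\la=\ol\la\circ\ol\t$ with $\ol\la:=\la\circ\iota_\th$, and that $\ol\la$ is a $\Z^n$--translation invariant exotic (resp.\ $\ell^1$--continuous) linear form on $CS^m_\C(\Z^n)$. Conversely, the fact that the linear forms named in the theorem genuinely \emph{are} traces is precisely the content of Propositions \ref{prop:noncommutative residue trace}, \ref{prop:canonical trace} and \ref{leading symbol trace}. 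Thus everything reduces to classifying $\ol\la$ and to checking that composition with $\ol\t$ reproduces the listed linear forms.

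\textbf{Applying the classification and pushing back through $\ol\t$.} If $m\in\Z_n$, Theorem \ref{thm:charres}(i) gives $\ol\la=c\,\bigl(L\circ e\circ(\cdot)_{[m]}\bigr)+d\,\restor$ for some linear map $L:QS^m_\C(\R^n)\to\C$ and constants $c,d$. Composing with $\ol\t$, using Lemma \ref{lem:permut}(i),(iii) to permute $\ol\t$ past $e$ and past $(\cdot)_{[m]}$, and recalling $\res_\th=\restor\circ\ol\t$ (Proposition \ref{prop:noncommutative residue trace}), identifies $\la$ with a linear combination of the leading symbol trace $L\circ e\circ\ol\t\,((\cdot)_{[m]})$ of Proposition \ref{leading symbol trace} and the restriction of $\res_\th$; this is (1) for fixed $m$. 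If $m\notin\Z_n$ and $\la$ is $\ell^1$--continuous, Theorem \ref{thm:characressum}(ii) gives $\ol\la=c\,\cutoffsumtor$, hence $\la=\ol\la\circ\ol\t=c\,\cutoffsumtor\circ\ol\t=c\,\cutoffsum_\th$ by the definition in Proposition \ref{prop:canonical trace}; this is (3) for fixed $m$. For the ``consequently'' parts one runs the same argument on the whole algebra: a trace on $CS^{\Z_n}_{\A_\th}(\Z^n)$ (resp.\ $CS^{\notin\Z}_{\A_\th}(\Z^n)$) produces a $\Z^n$--translation invariant exotic (resp.\ $\ell^1$--continuous) form $\ol\la$ on $CS^{\Z_n}_\C(\Z^n)$ (resp.\ $CS^{\notin\Z}_\C(\Z^n)$), and Theorem \ref{thm:charres}(iii) (resp.\ Theorem \ref{thm:characressum}) forces $\ol\la$ to be proportional to $\restor$ (resp.\ $\cutoffsumtor$), whence $\la$ is proportional to $\res_\th$ (resp.\ $\cutoffsum_\th$).

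\textbf{Main obstacle.} Essentially all of the hard analysis is front--loaded into the results invoked above: the passage from traciality to closedness and $\Z^n$--translation invariance (Lemma \ref{lem:translationinv}, with its delicate manipulation of the Weyl elements and the decomposition \eqref{symbdecomp}), the classification on $\R^n$ (Proposition \ref{prop:charresRn}, resting on \cite{FGLS} and Stokes' property), and its transfer to the toroidal setting via extension maps (Theorems \ref{thm:charres}, \ref{thm:characressum}). Within the present proof the only delicate points are the verification that $\res_\th$, $\cutoffsum_\th$ and the leading symbol forms are traces — handled in Propositions \ref{prop:noncommutative residue trace}--\ref{leading symbol trace} through Stokes' theorem on $\Sbb^{n-1}$ and the traciality of $\t$ — and the bookkeeping ensuring that ``composition with $\ol\t$'' matches the abstract leading symbol form and residue on $CS^m_\C(\Z^n)$ with the concrete leading symbol trace and $\res_\th$ on $CS^m_{\A_\th}(\Z^n)$, which is exactly what the permutation Lemma \ref{lem:permut} provides. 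The one place where a careless argument would fail is the step from fixed order to the full algebra: since a leading symbol form lives only on a single order, this must proceed through the ``union'' statements (iii) rather than by patching the fixed--order conclusions together.
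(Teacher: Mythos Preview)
Your proof is correct and follows essentially the same route as the paper: reduce to symbols via $\Op_\th$, factor the trace through $\ol\t$ using Corollary \ref{cor:step2}, classify the resulting $\Z^n$--translation invariant form on $CS^m_\C(\Z^n)$ by Theorems \ref{thm:charres} and \ref{thm:characressum}, and invoke Propositions \ref{prop:noncommutative residue trace}--\ref{leading symbol trace} for traciality. Your added remarks on Lemma \ref{lem:permut} for the bookkeeping and on why the ``consequently'' parts must use the union statements (iii) rather than patching fixed orders are apt elaborations of steps the paper leaves implicit.
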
 
  
\begin{proof}
As noted in Remark \ref{rk:reduction to symbols}, items $2.$ and $4.$ are respectively straightforward consequences of items $1.$ and $3.$. We now prove $1.$ and $3.$.

Let $\la$ be a $CS_{\A_\th}(\Z^n)$--trace on $CS_{\A_\th}^m(\Z^n)$. By Corollary \ref{cor:step2}, $\la$ is closed and $\Z^n$--translation invariant and the linear form $\ol{\la}:=\la\circ\iota_\th$ defines a $\Z^n$--translation invariant linear form on $CS_\C^m(\Z^n)$ satisfying $\la=\ol \la\circ \ol \t$. If $\la$ is exotic (resp.\ $\ell^1$--continuous), so is $\ol \la$ exotic (resp.\ $\ell^1$--continuous).

Let $m\in \Z_n$ and let $\la$ be exotic. By Theorem \ref{thm:charres} $(i)$, the linear form $\ol{\la}$ is a linear combination of a leading symbol form and $\restor$, so that $\la$ is a linear combination of a leading symbol form and $\res_\th$.

Let $m\in \C\setminus \Z_n$ and let $\la$ be $\ell^1$--continuous. By Theorem \ref{thm:characressum} $(ii)$, the linear form $\ol{\la}$ is proportional to $\cutoffsumtor$, so that $\la$ is proportional to $\cutoffsum_\theta$.

The tracial properties of these linear forms follow from Proposition \ref{prop:noncommutative residue trace}, Proposition \ref{prop:canonical trace} and Proposition \ref{leading symbol trace}.
\end{proof}

\begin{rk}
 Item 2.\ in Theorem \ref{mainthm1} compares with the classification result by Fathizadeh and Wong \cite[Theorem 4.4]{FW} since both give a characterisation of traces on $\Cl_\th^{\Z}(\T^2)$ ($n=2$). Our classification holds under the assumption that the trace be exotic whereas Fathizadeh and Wong's holds under the assumption that the trace be singular and continuous. \\ Further fixing the order of the operators as in Theorem \ref{mainthm1} offers a generalisation of these classification results on traces on noncommutative tori.
\end{rk}

\subsection{The commutative case}

Theorem \ref{mainthm1}  has a commutative counterpart obtained by setting $\theta=0$ which is interesting for its own sake since it yields a characterisation of traces on toroidal symbols of fixed order. It also  yields back known characterisations of the noncommutative residue \cite{W1,W2} and the canonical trace \cite{KV,LN-J,MSS,N-J} on certain classes of pseudodifferential operators on the torus seen as a particular closed manifold. Our results in the commutative case are nevertheless weaker than those quoted here since we require that the trace be either exotic or $\ell^1$--continuous.

As before  we shall identify $\A_0$ and $\A=\Ci(\T^n)$ (see Remark \ref{rk:A=A0}).
Recall that $CS_{\A}^m(\R^n)$ can be identified with the usual space of classical symbols of order $m$ and $\Cl_{\A}^m(\T^n)$ with the usual space of classical pseudodifferential operators of order $m$ on the commutative torus $\T^n$. 

A symbol $\sigma$ in  $CS_{\A}^m(\Z^n)$ is an $\A$--valued map on $\Z^n$ and any extension $e (\sg)$ an $\A$--valued map on $\R^n$. Taking the trace $\t$ amounts to integrating over the torus $\T^n$ so that in view of Lemma \ref{lem:permut} which allows us to permute the integration, the map $\sigma\mapsto \sigma_{[-n]}$ and the extension map $e$, for $\theta=0$ the symbolic noncommutative residue reads for $\sigma \in CS_{\A }(\Z^n)$
 \[
 \res_0 (\sg)= \int_{\Sbb^{n-1}}\int_{\T^n} (e (\sg))_{[-n]} (\xi) \, dS(\xi) =\res \left(\int_{\T^n}\, e  (\sg)\right) 
 \]
and the toroidal canonical discrete sum reads for $\sigma \in CS^{\notin \Z_n}_{\A }(\Z^n)$,
\[
{\textstyle\cutoffsum_0 }(\sg)= \cutoffint_{\R^n} \, \int_{\T^n}e (\sigma)={\textstyle\cutoffsum_{\Z^n}}\int_{\T^n} e(\sigma), 
\]
independently of the choice of normalised extension map $e$.

Similarly, a leading symbol linear form on $CS^m_{\A}(\Z^n)$ is of the form 
 \[
 \sigma\longmapsto  L\left(\int_{\T^n}(e (\sigma))_{[m]}\right), 
 \]
for some linear form $L:QS^m_\C(\R^n)\to\C$.

In the following, $\Res$ denotes the usual noncommutative Wodzicki residue \cite{W1,W2}, and $\TR$  the usual Kontsevich and Vishik canonical trace \cite{KV,LN-J,MSS,N-J} on closed manifolds (here the torus).

\begin{cor}
\label{maincor1} Let $m$ be a complex number.
\begin{enumerate}
\item If $m\in \Z_n$, any exotic trace on $CS_\A^m(\Z^n)$ is a linear combination of a leading symbol linear trace and the restriction of $\res_0$ to $CS_\A^m(\Z^n)$.  
 
Consequently, any exotic trace on $CS_\A^{\Z_n}(\Z^n)$ is proportional to the restriction of $\res_0$ to $CS_\A^{\Z_n}(\Z^n)$. 

\item For an operator $A$ in $\Cl(\T^n)$, we have 
  \begin{equation}
  \label{eq:identif1}
  \Res_0(A)= \res_0 \left(\Op_0^{-1}(A)\right)= \Res(A).
  \end{equation} 

If $m\in \Z_n$, any exotic trace on  $\Cl^m(\T^n)$ is a linear combination of a leading symbol linear trace and the restriction of $\Res$ to $\Cl^m(\T^n)$. 
 
Consequently, any exotic trace on $\Cl^{\Z_n}(\T^n)$ is proportional to the restriction of $\Res$ to $\Cl^{\Z_n}(\T^n)$.  

\item If $m\notin \Z_n$, any $\ell^1$--continuous trace on $CS^{m}_\A(\Z^n)$ is proportional to the restriction of\ \ $\cutoffsum_0$ to $CS^{m}_\A(\Z^n)$. 

Consequently, any $\ell^1$--continuous trace on $CS^{\notin\Z}_\A(\Z^n)$ is proportional to the restriction of\ \ $\cutoffsum_0$ to $CS^{\notin\Z}_\A(\Z^n)$. 

\item For an operator $A$ in $\Cl^{\notin\Z_n} (\T^n)$, we have
  \begin{equation}
  \label{eq:identif2}
  \TR_0(A)= {\textstyle\cutoffsum }_0 \Op_0^{-1}(A)=\TR(A).
  \end{equation}

If $m\notin \Z_n$, any $\mathcal{L}^1$--continuous trace on $\Cl^{m}(\T^n)$ is proportional to the restriction of $\TR$ to $\Cl^{m}(\T^n)$. 
  
Consequently, any $\mathcal{L}^1$--continuous trace on $\Cl^{\notin\Z}(\T^n)$ is proportional to the restriction of $\TR$ to $\Cl^{\notin \Z}(\T^n)$. 
\end{enumerate} 
\end{cor}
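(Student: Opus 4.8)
The plan is to deduce Corollary \ref{maincor1} almost entirely from Theorem \ref{mainthm1} specialised to $\theta=0$, the only genuinely new input being the two identifications \eqref{eq:identif1} and \eqref{eq:identif2}, which match the toroidal objects $\Res_0,\TR_0$ with the classical Wodzicki residue $\Res$ and the Kontsevich--Vishik trace $\TR$ on the closed manifold $\T^n$. First I would record the translation dictionary: under the isomorphism $\A_0\simeq\A=\Ci(\T^n)$ of Remark \ref{rk:A=A0} one has $CS_\A^m(\Z^n)=CS_{\A_0}^m(\Z^n)$ and $\Cl^m(\T^n)=\Cl_0^m(\T^n)$; the forms $\res_0$ and $\cutoffsum_0$ are the $\theta=0$ instances of $\res_\th$ and $\cutoffsum_\th$ (the expressions given before the corollary being obtained from those of Propositions \ref{prop:noncommutative residue trace} and \ref{prop:canonical trace} by permuting the integration over $\T^n$ with the extension map $e$ and with $\sigma\mapsto\sigma_{[-n]}$, which is legitimate by Lemma \ref{lem:permut}); and the leading symbol linear forms displayed before the corollary are the $\theta=0$ leading symbol traces of Proposition \ref{leading symbol trace}. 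With this dictionary, items $1$ and $3$ are verbatim items $1$ and $3$ of Theorem \ref{mainthm1} at $\theta=0$, and the ``consequently'' classification statements of items $2$ and $4$ are items $2$ and $4$ of Theorem \ref{mainthm1} at $\theta=0$ once $\Res_0=\Res$ and $\TR_0=\TR$ are established.

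To prove \eqref{eq:identif2} I would fix $m\in\C\setminus\Z_n$ and argue by the uniqueness already at hand. By Proposition \ref{prop:canonical trace} the restriction of $\TR_0$ to $\Cl^m(\T^n)$ is an $\mathcal{L}^1$--continuous $\Cl(\T^n)$--trace, and so is the restriction of $\TR$, which by construction agrees with the operator trace $\Tr$ on the (non--trivial) subspace of $\Cl^m(\T^n)$ consisting of operators of order with real part $<-n$. Item $4$ of Theorem \ref{mainthm1} at $\theta=0$ then forces $\TR=c_m\,\TR_0$ on $\Cl^m(\T^n)$; since $\TR_0$ also coincides with $\Tr$ on that subspace (Proposition \ref{prop:canonical trace} again), $c_m=1$. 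Letting $m$ vary, and using that $\TR_0=\TR=\Tr$ on operators of order with real part $<-n$, this yields $\TR_0=\TR$ on all of $\Cl^{\notin\Z_n}(\T^n)$, hence \eqref{eq:identif2} and with it the classification of item $4$.

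For \eqref{eq:identif1} I would proceed by a direct symbol comparison, with a uniqueness argument as a fallback. After permuting integration over $\T^n$ with $e$ and with $(\cdot)_{[-n]}$, both $\Res_0(A)$ and $\Res(A)$ take the form $\int_{\T^n}\int_{\Sbb^{n-1}}\big(s_A(x,\xi)\big)_{[-n]}\,dS(\xi)\,dx$, with $s_A=e(\Op_0^{-1}(A))$ for $\Res_0$ and $s_A$ the local (Hörmander) symbol of $A$ for $\Res$. I would then invoke the Ruzhansky--Turunen correspondence between the toroidal and the Hörmander symbol of an operator on $\T^n$: for each fixed $x$ these two symbols differ, modulo smoothing, by a sum of terms each of which is a $\del_\xi$--derivative of a positively homogeneous symbol, so their degree $-n$ homogeneous components have equal integral over $\Sbb^{n-1}$ by the Stokes identity $\int_{\Sbb^{n-1}}\del_{\xi_j}f\,dS=0$ valid for $f$ positively homogeneous of degree $-n+1$ (the same identity used in the proof of Proposition \ref{prop:noncommutative residue trace}). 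Integrating over $\T^n$ gives $\Res_0=\Res$, hence item $2$. As a fallback, $\Res$ is an exotic $\Cl(\T^n)$--trace on $\Cl^{\Z_n}(\T^n)$, so by item $2$ of Theorem \ref{mainthm1} at $\theta=0$ it is proportional to $\Res_0$, and the constant is pinned to $1$ by evaluating both on a single operator of order exactly $-n$, e.g.\ the Fourier multiplier with toroidal symbol $k\mapsto\langle k\rangle^{-n}U_0$, for which $e$ may be chosen to be $\xi\mapsto\langle\xi\rangle^{-n}$ whose degree $-n$ component is $\xi\mapsto|\xi|^{-n}$, so that the value of $\res_0$ on it equals $\Vol(\Sbb^{n-1})$.

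I expect the main obstacle to be precisely this last identification: making the comparison between the toroidal symbol, a map on $\Z^n$ with values in $\A$, and the Hörmander symbol, a function on $\T^n\times\R^n$, watertight, and in particular tracking the factors of $2\pi$ hidden in the two quantisations, in the generators $2\pi i\delta_j$ of the torus action, and in the normalisation of $\Res$ taken from \cite{W1,W2}, so that the correction terms in the Ruzhansky--Turunen correspondence genuinely reduce to spherical divergences and the proportionality constant really is $1$. Everything else is bookkeeping on top of Theorem \ref{mainthm1}.
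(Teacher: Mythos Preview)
Your proposal is correct and follows the paper's approach closely. The paper's own proof is two sentences: items $1$--$4$ are Theorem \ref{mainthm1} at $\theta=0$, and the identifications \eqref{eq:identif1}, \eqref{eq:identif2} ``follow from the fact that $\Res_0$ and $\Res$ (resp.\ $\TR_0$ and $\TR$) enjoy the same characterisation as traces on operators on the torus'' --- i.e.\ pure uniqueness, exactly your argument for \eqref{eq:identif2} and your fallback for \eqref{eq:identif1}.

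The one place you diverge is that for $\Res_0=\Res$ you put a direct symbol comparison via the Ruzhansky--Turunen correspondence first and treat uniqueness as a fallback, whereas the paper invokes only uniqueness. Your direct route is sound (the correction terms between toroidal and H\"ormander symbols are indeed $\del_\xi$--derivatives modulo smoothing, hence killed on $\Sbb^{n-1}$), but it is more work than the paper does. Your worry about pinning the proportionality constant to $1$ and tracking $2\pi$'s is legitimate --- the paper simply does not address it, tacitly assuming the normalisations agree. So your fallback, evaluating both residues on the multiplier $\langle k\rangle^{-n}$, is in fact more careful than what the paper provides; for \eqref{eq:identif2} the constant is genuinely forced to $1$ by agreement with $\Tr$ on trace--class operators, as you say.
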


\begin{proof} 
This is a straightforward consequence of Theorem \ref{mainthm1} in which we have set $\theta=0$.

The identifications \eqref{eq:identif1} (resp.\ \eqref{eq:identif2}), follow from the fact that $\Res_0$ and $\Res$ (resp.\ $\TR_0$ and $\TR$), enjoy the same characterisation as traces on operators on the torus.
\end{proof}

\section{Traces of holomorphic families}
\label{section:holomorphic}

Let as before $\B$ stand for $\A_\theta$ or $\C$.

\subsection{Holomorphic families} 

The notion of holomorphic family of classical pseudodifferential symbols was first introduced by Guillemin in \cite{Gu} and extensively used by Kontsevich and Vishik in \cite{KV}. It uses the notion of holomorphic family of symbols which we extend here to discrete symbols. We need the concept of local uniformity of families of symbols.

\begin{defn}
A family $\sigma(u)$ of symbols in $S_\B(\R^n)$, resp.\ $S_\B(\Z^n)$ of real order $m(u)$ parametrized by $u$ in an open subset $U$ of some topological space is called \emph{locally uniform in a neighborhood of $u_0\in U$} if there is a compact subset $K$ of $U$ containing $u_0$ such that
\begin{eqnarray}
\label{eq:uniformestimate}
 &&\forall \alpha\in \Z_{\geq 0}^n,\forall \beta\in \Z_{\geq 0}^n, \exists\, C_{K, \alpha, \beta }>0,\nonumber\\
 && \forall u\in K, \quad p^{ m(u)}_{\a,\b}( \sg(u)) \leq C_{K,\alpha, \beta}.
\end{eqnarray}
The family is called \emph{locally uniform on $U$} if this holds for any $u_0\in U$.
\end{defn}

\begin{rk}
For a family $\sigma(u)$ of constant order $m(u)=m$, the condition (\ref{eq:uniformestimate}) implies that the extension $e:S_\B(\Z^n)\to S_\B(\R^n)$ is continuous at $u_0$ in the Fr\'echet topology of symbols of constant order.
\end{rk}

\begin{defn}
We call an extension map $e:S_\B(\Z^n) \to S_\B(\R^n)$ \emph{locally uniform} if it takes any locally uniform family $\sigma(u)$ in $S_\B(\Z^n)$ on an open subset $U$ of any topological space to a locally uniform family on $U$ of symbols $e\circ \sigma(u)$ in $S_\B(\R^n)$. 
\end{defn} 

\begin{lem}
\label{lem:elocallyuniform} 
The normalised extension map $e$ introduced in (\ref{eq:enormalised}) is locally uniform.
\end{lem}

\begin{proof} 
The proof of the uniform estimate (\ref{eq:uniformestimate}) closely follows that of the continuity of $e$ in Lemma \ref{lem:extensionmap}.\\
Given $\a,\b\in \N^n$, we first observe that for any real number $m$ and any $\sigma \in S^m_{\A_\th}(\Z^n)$ 
\begin{align*}
\del^\b (\ol\delta^\a e(\sg)) (\xi) &= \sum_{k\in \Z^n} (\del^\b \wh \rho) (\xi-k)\, (\ol \delta^\a\sg)(k) \\
&= \sum_{k\in \Z^n} (\ol \Delta^\b \phi_\b)(\xi-k)\, (\ol \delta^\a\sg)(k)\\
&= (-1)^{|\b|}\sum_{k\in \Z^n} \phi_\b(\xi-k)\, (\Delta^\b \ol \delta^\a\sg)(k)
\end{align*}
where $\ol \Delta_j = I- T_{-e_j}$, $\ol \Delta^\b = \ol \Delta_1^{\b_1}\cdots \ol \Delta_n^{\b_n}$, and where the $\phi_\b$ are functions in $\SS(\R^n)$. 
Using the notation $\xi-\Z^n := \set{\xi-k \ :\ k\in \Z^n}$ and Peetre's inequality, this implies that for all $\sg\in S^m_{\A_\th}(\Z^n)$, and all $\a,\b\in \N^n$,
\begin{align*}
 \norm{\del^\b (\ol\delta^\a e(\sg))(\xi)}&\leq \sum_{k\in \Z^n} | \phi_\b(\xi-k)|\, \norm{(\Delta^\b \ol \delta^\a(\sg))(k)}\\
 &\leq p^{(m)}_{\a,\b}(\sg) \sum_{k\in \Z^n} | \phi_\b(\xi-k)|\,\langle k\rangle^{m-|\b|}\\
 &\leq p^{(m)}_{\a,\b}(\sg) \sum_{\eta\in \xi-\Z^n} | \phi_\b(\eta)|\,\langle \xi -\eta\rangle^{m-|\b|}\\
 &\leq \langle \xi\rangle^{m-|\b|} p^{(m)}_{\a,\b}(\sg) 2^{|m-|\b||} \sum_{\eta\in \xi-\Z^n} | \phi_\b(\eta)|\, \langle \eta\rangle^{|m-|\b||} \\
 &\leq \langle \xi\rangle^{m-|\b|} \sup_{\xi\in \R^n} g_{\b,m}(\xi)\, p^{(m)}_{\a,\b}(\sg) 
\end{align*}
where 
\[
 g_{\b,m}:\xi\mapsto 2^{|m-|\b||} \sum_{\eta\in \xi-\Z^n} |\phi_\b(\eta)|\, \langle \eta\rangle^{|m-|\b||}.
\]
Implementing this estimate for $\sigma(u)$ of order $m(u)$ with $u$ in a compact neighborhood $K$ of $u_0\in U$, we get the locally uniform estimate (\ref{eq:uniformestimate}) where we have set:
\[
 C_{ K, \alpha,\beta}= \sup_{\xi\in \R^n, u\in K}  g_{\b,m(u)}(\xi).
\] 
\end{proof} 

We now extend the notion of holomorphicity to families of $\B$--valued classical symbols.
\begin{defn}
\label{def:holomorphicfamily} 
A family of classical symbols  
\begin{equation}
\label{eq:holclassical}
 \sigma(u)\sim \sum_{j\geq 0}
 \sigma_{[m(u)-j]}(u) \ \in CS_\B^{m(u)}(\R^n),
\end{equation}
of complex order $m(u)$ parametrised by an open subset $U$ of some topological space is \emph{continuous}, resp.\ \emph{differentiable} at a point $u_0\in U$ if:
\begin{enumerate}
\item its order $m:U\to \C$ is continuous, resp.\ differentiable at $u_0$;
\item for every $\xi\in \R^n$ the maps $u\mapsto\sigma(u)(\xi)$ and $u\mapsto \sigma_{[m(u)-j]}(u)(\xi)$ for $j\in \Z_{\geq 0}$ are continuous, resp.\ differentiable at $u_0$ as functions with values in the Fr\'echet space $\B$;    
\item for any integer $N\geq 1$, the remainder $\sigma_{(N)} (u):= \sigma(u)- \sum_{j=0}^{N-1}\chi\, \sigma_{[m(u)-j]}(u) $, resp.\ its derivative $\partial_u\sigma_{(N)} (u):= \partial_u\sigma(u)- \sum_{j=0}^{N-1} \partial_u\sigma_{[m(u)-j]}(u)$ defines a locally uniform family of order $\Real(m(u))-N+\e$ in a neighborhood of $u_0$, for any positive $\e$.
\end{enumerate}
If $U$ is a complex domain, a differentiable family $\sigma(u)$ at $u_0$ is said to be holomorphic at $u_0$.\\
If $\sigma(u)$ is continuous, resp.\ differentiable, resp.\ holomorphic at every point $u\in U$, it is called a continuous, resp.\ differentiable, resp.\ holomorphic family of symbols parametrised by $U$. \\
Similarly, a family of discrete symbols  
\begin{equation}
\label{eq:holclassicaldiscrete}
 \sigma(u)\sim \sum_{j\geq 0}
 \sigma_{[m(u)-j]}(u) \ \in CS_\B^{m(u)}(\Z^n),
\end{equation}
of complex order $m(u)$ parametrised by an open subset $U$ of some topological space is continuous, resp.\ differentiable at a point $u_0\in U$ if it satisfies conditions 1.--3.\ where in the second condition $\xi\in \R^n$ is replaced by $k\in \Z^n$.
\end{defn} 

Condition 3.\ implies the following: Let $\sigma^h_{[m(u)-j]}(u)$ denote the homogeneous function which coincides with the quasihomogeneous function $\sigma_{[m(u)-j]}(u)$ outside some ball centered at zero. For any excision function $\chi$ at zero \footnote{By excision function at zero we mean a smooth function on $\R^n$ which vanishes in a neighborhood of zero and is identically one outside some ball centered at zero.} for any integer $N\geq 1$, the remainder, resp.\ its derivative
\begin{eqnarray*}
\sigma^\chi_{(N)} (u)&:=& \sigma(u)- \sum_{j=0}^{N-1}\chi\, \sigma^h_{[m(u)-j]}(u),\\
{\rm resp.} \quad \partial_u\sigma^\chi_{(N)} (u)&:= & \partial_u\sigma(u)- \sum_{j=0}^{N-1}\chi\, \partial_u\sigma^h_{[m(u)-j]}(u) 
\end{eqnarray*} 
should be locally uniform of order $\Real(m(u))-N+\e$ in a neighborhood of $u_0$, for any positive $\e$. If this requirement is fulfilled for one given excision function $\chi$,  it then follows from the local uniformity of the family, that it holds for any other excision function $\chi^\prime$ since the differences $(\chi-\chi^\prime)\, \sigma^h_{[m(u)-j]}(u)$ and $(\chi-\chi^\prime)\,\partial_u \sigma^h_{[m(u)-j]}(u)$ are smoothing symbols. 

\begin{rk} 
For a family $\sigma(u)$ of continuous, resp.\ differentiable symbols  
\begin{equation}\label{eq:logclassical}
 \sigma(u)\sim \sum_{j\geq 0}
 \sigma_{[m -j]}(u) \ \in CS_\B^{m }(\R^n),
\end{equation}
of {\rm constant} complex order $m$, the continuity, resp.\ differentiability at a point $u_0\in U$ amounts to its continuity, resp.\ differentiability in the Fr\'echet topology of classical symbols of constant order $m$.
\end{rk}

\begin{rk} 
Given a differentiable family $\sigma(u)$ in $CS_{\A_\theta}(\R^n)$ (resp.\ $CS_{\A_\theta}(\Z^n)$), the family $\bar \t (\sigma(u))$ is differentiable in $CS_{\C}(\R^n)$ (resp.\ $CS_{\C}(\Z^n)$) and we have 
\begin{equation}
\label{eq:derivativetrace}
\partial_u(\bar \t(\sigma(u)))= \bar \t(\partial_u(\sigma(u))).
\end{equation}
\end{rk}

\begin{prop}
\label{prop:holdiscreteversuscont}
A locally uniform extension map $e:CS_\B (\Z^n)\to  CS_\B (\R^n)$ takes a continuous, resp.\ differentiable, resp.\ holomorphic family of symbols in $CS_\B (\Z^n)$ to one in $CS_\B (\R^n)$. If the family is differentiable we have 
\begin{eqnarray}
\label{eq:extensionpartial}
 \partial_u (e\circ \sigma(u))&\sim& e\circ\partial_u(\sigma(u));\\
 \partial_u (e\circ \sigma(u))_{[m(u)-j]} &\sim& e\circ\left(\partial_u\sigma(u) \right)_{[m(u)-j]}\sim e\circ \partial_u (\sigma_{[m(u)-j]}(u)) \nonumber 
\end{eqnarray} 
for any $j$ in $\Z_{\geq 0}$. In particular, this holds for the normalised extension map $e$ introduced in (\ref{eq:enormalised}). 
\end{prop}

\begin{proof}    
Let $\sigma(u)$ be a continuous, resp.\ differentiable, resp.\ holomorphic family in $CS_\B (\Z^n)$, so that it satisfies conditions 1.--3.\ in Definition \ref{def:holomorphicfamily}. Since the extension does not modify the order, the family $e\circ \sigma(u)$ also satisfies Condition 1. Likewise, the continuity of the extension map in the Fr\'echet topology of symbols of constant order ensures that $e\circ \sigma(u)$ satisfies Condition 2. The fact that $e\circ \sigma(u)$  satisfies Condition 3.\  then follows from the local uniformity assumption on $e$. In particular, this assumption holds for the normalised extension map as a consequence of  Lemma \ref{lem:elocallyuniform}. \\
The first identity in (\ref{eq:extensionpartial}) follows from the fact that $\partial_u (e\circ \sigma(u))$ and $e\circ\partial_u (\sigma(u))$ define two extensions of $\partial_u (\sigma(u))$ and hence differ by a smoothing symbol. The second identity in (\ref{eq:extensionpartial}) then follows from the fact that the extension map preserves the order combined with the fact that for a differentiable family $\tau(u)$ in $CS_\B (\R^n)$ we have $\left(\partial_u\tau(u) \right)_{[m(u)-j]}\sim \partial_u (\tau_{[m(u)-j]}(u))$ for any $j\in \Z_{\geq 0}$.
\end{proof}

\begin{cor}
A family $\sigma(u)$ in $CS_\B (\Z^n)$ is continuous, resp.\ differentiable, resp.\ holomorphic at a point if and only if there is a locally uniform extension map $e$ such that $e\circ \sigma(u)$ in $CS_\B (\R^n)$ is continuous, resp.\ differentiable, resp.\ holomorphic at that point.
\end{cor}

\begin{proof}
In view of Proposition \ref{prop:holdiscreteversuscont} which shows one implication, the equivalence follows from the simple observation that when restricted to $\Z^n$, a family $\sigma(u)$ in $CS_\B (\R^n)$ which is continuous, resp.\ differentiable, resp.\ holomorphic at a point, gives rise to a family $\sigma(u)_{\vert_{\Z^n}}$ in $CS_\B (\Z^n)$ which is continuous, resp.\ differentiable, resp.\ holomorphic at that point.
\end{proof}

\begin{defn}  
We call a family $A(u)$ of operators in $\Cl_{\theta}(\T^n)$ of order $m(u)$ parametrized by an open subset $U$ of a topological space continuous, resp.\ differentiable at $u_0\in U$ (resp.\ on $U$) if the family of symbols $\Op_\theta^{-1}(A(u))$ is continuous, resp.\ differentiable at $u_0 $ (resp.\ on $U$) in $CS_{\A_\theta}(\Z^n)$ of order $m(u)$.\\ If $U\subset \C$ we call a differentiable family at $u_0$ (resp.\ on $U$) holomorphic at $u_0$ (resp.\ on $U$).
\end{defn}

\begin{rk}
For a differentiable family $A(u)$ of operators in $\Cl_\theta(\T^n)$ we have
\begin{equation}\label{eq:diffOp}
 \partial_u\left(\Op_\theta^{-1}(A(u))\right)= \Op_\theta^{-1}\left(\partial_uA(u)\right).  
\end{equation}
\end{rk}

\subsection{The noncommutative residue as a complex residue}

Given a pseudodifferential operator $A$ on the noncommutative torus and a holomorphic\footnote{holomorphic in the sense of the previous section.} germ $A(z)$ at zero of pseudodifferential operators on the noncommutative torus, which coincides with $A$ at $z=0$, the subsequent theorem shows that ${\TR}_\theta (A(z))$ defines a meromorphic germ at zero so that we can define  the regularised trace $\underset{z=0}{\fp}\left({\TR}_\theta (A(z))\right)$ of $A$. We use here a terminology and notations similar to those of \cite{PS} to which we refer the reader for further details. 

The subsequent theorem provides  an interpretation of the noncommutative residue  of an operator ${\Res}_\theta(A(0))$ as a complex residue and, whenever $A$ is a differential operator, of the regularized trace of $A$ in terms of the noncommutative residue ${\Res}_\theta(A^\prime(0))$. 

Without any loss of generality, we shall assume that the extension maps $e:S_{\A_\theta}(\Z^n)\to S_{\A_\theta}(\R^n)$ used in the following are normalised and  $\ol \t$--compatible.
In the following, we shall say that a symbol $\sg \in S_{\A_\th}(\Z^n)$ is \emph{polynomial} if is a finite sum 
of symbols of the form $k\mapsto k^\a a$ where $a\in \A_\th$ and $\a\in \N_n$. We refer to \cite[Definition 3.1]{L} for the definition of a log-polyhomogeneous symbol of degree $(m,k)$, where $m$ denotes the polynomial degree, and 
$k$ denotes the logarithmic degree.

\begin{thm}\label{thm:KVPS}
\begin{enumerate}
\item Given a holomorphic germ $\sigma(z)$ in $CS_{\A_\theta}(\Z^n)$ at zero with non-constant affine order $m(z)$, 
\begin{enumerate}
 \item the map 
  \[
   z\longmapsto {\textstyle\cutoffsum_\th} \sigma(z)
  \]
  is meromorphic with a simple pole at zero and residue
 \[
  \underset{z=0}{\Res}\Big({\textstyle\cutoffsum_\th} \sigma(z)\Big)= -\frac{1}{m^\prime(0)} \, {\res}_\theta(\sigma(0)).
 \]
 \item Moreover if $\sigma(0)$ is a polynomial, then  
  \begin{equation}\label{PS}
  \underset{z=0}{\fp}\Big({\textstyle\cutoffsum_\th } \sigma(z)\Big)= -\frac{1}{m^\prime(0)} \, {\res}_\theta(\sigma^\prime(0)),
  \end{equation}  
where we have set ${\res}_\theta(\sigma^\prime(0)):= {\res} \left(e\circ \bar\t(\sigma^\prime(0))\right)$. The residue of the log--polyho\-mogeneous symbol $e\circ \bar\t(\sigma^\prime(0))$ of logarithmic degree one, is defined as in \cite{PS} by integrating on the unit sphere the $(-n)$--th homogeneous component of the symbol by means of  the normalised canonical volume measure on the sphere.
\end{enumerate}
\item Given a holomorphic germ $A(z)$ in $\Cl_\theta(\T^n)$ at zero with non--constant affine order $m(z)$,
\begin{enumerate}
 \item the map 
  \[
   z\longmapsto {\TR}_\theta (A(z))
  \]
  is meromorphic with a simple pole at zero and residue
  \begin{equation}\label{KV}
   \underset{z=0}{\Res}\Big({\TR}_\theta (A(z))\Big)= -\frac{1}{m^\prime(0)} \, {\Res}_\theta(A(0)).
  \end{equation}
 \item Moreover if $A(0)$ is a differential operator, then 
  \[
   {\Res}_\theta(A^\prime(0)):= {\res}_\theta\left(\left(\Op_\theta^{-1}(A)\right)^\prime(0)\right)
  \]
  is well--defined and 
  \begin{equation}\label{PSOp}
    \underset{z=0}{\fp}\Big({\TR}_\theta (A(z))\Big)= -\frac{1}{m^\prime(0)} \, {\Res}_\theta(A^\prime(0)).
  \end{equation}
\end{enumerate}
\end{enumerate}
\end{thm}

\begin{proof} 
\begin{enumerate}
\item The proof relies on known results \cite{PS} (and \cite{P2} for a review) for holomorphic families of symbols on $\R^n$ since the extended symbol $e_\C\circ \ol\t (\sigma(z))=\ol \t \circ e (\sigma(z))$ (see Lemma \ref{lem:permute extensions and traces}) defines a holomorphic family in $CS_\C(\R^n)$.
\begin{enumerate}
\item  By \cite[Theorem 4.10 parts (1) and (2)]{P2}, the map ${\textstyle\cutoffsum_\th}\sigma(z)= \cutoffint \ \ol\t\circ e \circ \sigma(z)$ is meromorphic with a simple pole at zero and 
\begin{eqnarray*}
\underset{z=0}{\Res}\Big({\textstyle\cutoffsum_\th} \sigma(z)\Big)
&=& \underset{z=0}{\Res}\cutoffint \ \ol \t\circ e \circ \sigma(z)\\
&=&-\frac{1}{m^\prime(0)} \, {\res} (\ol \t\circ e \circ\sigma(0))\\
&=&-\frac{1}{m^\prime(0)} \, {\res}_\theta(\sigma(0)).
\end{eqnarray*}
\item  By \cite[Theorem 4.10 part (3)]{P2}, the finite part at $z=0$ is given by  
\begin{eqnarray*}
\underset{z=0}{\fp}\Big({\textstyle\cutoffsum_\th}\sigma(z)\Big)
&=& \underset{z=0}{\fp}\cutoffint \ \ol\t\circ e \circ \sigma(z)\\
&=&-\frac{1}{m^\prime(0)} \, {\res} \left((\ol\t\circ e \circ\sigma)^\prime(0)\right)\\
&=&-\frac{1}{m^\prime(0)} \, {\res} \left(\ol\t\circ e (\sigma^\prime(0))\right) \\
&& {\rm using\quad (\ref{eq:derivativetrace})\quad{\rm and} \quad (\ref{eq:extensionpartial})}\\
&=&-\frac{1}{m^\prime(0)} \, {\res}_\theta(\sigma^\prime(0)).
\end{eqnarray*}
\end{enumerate}
\item 
\begin{enumerate}
\item This follows from 1.\ (a)  applied to $\sigma(z)=\Op_\theta^{-1}(A (z))$ since ${\TR}_\theta (A(z))={\textstyle\cutoffsum_\th }\Op_\theta^{-1}(A(z))$ and ${\Res}_\theta(A(0))={\res}_\theta(\Op_\theta^{-1}(A(0)))$.

\item Similarly, this follows from 1.\ (b) applied to $\sigma(z)=\Op_\theta^{-1}(A (z))$ using (\ref{eq:diffOp}) which yields $\sigma^\prime(z)=\Op_\theta^{-1}(A^\prime (z))$.
\end{enumerate}
\end{enumerate}
\end{proof}
 
Let $P =1+\mathbf{\Delta}$ be as in Example \ref{ex:secprem}. Applying Theorem \ref{thm:KVPS} to the holomorphic family $A(z)= A\, P^{-z }$ with $A$ any operator in $\Cl_\theta (\T^n)$ gives rise to the $\zeta$--function
\[
 \zeta_\theta(A,P)(z)= {\TR}_\theta (A\, P^{-z}). \index{$\zeta_\theta(A,P)(z)$}
\]
It is meromorphic with a simple pole at $z=0$ where the residue is given by 
\begin{equation}
\label{eq:zetaAP}
\underset{z=0}{\Res}\left({\TR}_\theta (A\, P^{-z})\right)= 2\, {\Res}_\theta(A )
\end{equation} 
since $A(z)$ has order $a-2z$, $a$ being the order of $A$. We define the $P$--regularised $\zeta$--trace of an operator $A$ in $\Cl_\theta (\T^n)$ by
\begin{equation}
\label{eq:zetatrace}
{\TR}_\theta^P(A):= \underset{z=0}{\fp}\,\zeta_\theta(A,P)(z). \index{${\TR}_\theta^P$}
\end{equation} 
If the order of $A$ is non--integer or if $A$ is a differential operator then $A$ has vanishing residue, so that by Theorem \ref{thm:KVPS} applied to the holomorphic family $A(z)= A\, P^{-z }$, $\zeta_\theta(A,P)(z)$ is holomorphic at $z=0$ and 
\[
 {\TR}_\theta^P(A)=\zeta_\theta(A,P)(0)= {\TR}_\theta (A )= {\textstyle\cutoffsum_{\Z^n}^{\rm tor}}\, \ol\t(\sg_A)
\]
coincides with the canonical trace of $A$. In particular for $A=I$ the $\zeta$--function $\zeta_P(z):= \zeta_\theta(I,P)(z)$ is holomorphic at zero and we have: 
\[
 \zeta_P(0):={\TR}_\theta^P(I). \index{$\zeta_P$}
\]
If the order of $A$ is smaller than $-n$ then $A$ is trace--class, ${\TR}_\theta (A\, P^{-z})$ has vanishing residue. Combining Theorems \ref{thm:Sobolev}, Proposition \ref{prop:canonical trace} and \ref{thm:KVPS} yields
\[
 {\TR}_\theta^P(A)= \lim_{z\to 0}{\TR}_\theta (A\, P^{-z})= \TR_\theta (A)= \Tr(A)= {\textstyle\sum_{\Z^n}^{\rm tor}}\, \ol\t(\sg_A)
\]
which coincides with the ordinary operator trace of $A$.\\
However, unlike the residue, the linear form ${\Tr}_\theta^P$ does not define a trace on $\Cl_\theta(\T^n)$ and it depends on the regulator $P$. The following corollary shows that the trace defect and the dependence on the regulator can be expressed in terms of a residue.

\begin{cor}
\label{cor:defect}
\begin{enumerate}
\item The Hochschild coboundary of the linear form ${\TR}_\theta^P$ on $\Cl_\theta^a (\T^n)$ is given by a residue. Indeed, if  $A,B\in \Cl_\theta(\T^n)$ the operator bracket $ [B,\log P]$ is classical and we have
\[
 \delta {\TR}_\theta^P(A,B):= {\rm TR}_\theta^P([A,B])= - \frac{1}{2} \, {\Res}_\theta( A\, [B,\log P ]). \index{$ \delta {\TR}_\theta^P(A,B)$}
\]
\item  For any self-adjoint  element  $h $ in $\mathcal{A}_\theta$, we set $\k :=e^{-\frac{h }{2}}$. The operator $P_{\k}:= \k \,P\, \k $ like $P$, is an invertible self--adjoint elliptic operator of order two in $\Cl_\theta (\T^n)$.  Its logarithm $\log P_{\k }$ defined as $\partial_zP_{\k }^z\vert_{z=0}$, coincides with the logarithm of $P_\k$ defined by means of the functional calculus. The difference $\log P -\log P_{\k }$ is classical and for all $A\in \Cl_\theta(\T^n)$ we have 
\[
 {\TR}_\theta^{P_{\k }}(A)- {\TR}_\theta^P(A)=\frac{1}{2} \, {\Res}_\theta\left( A\, \left(\log P -\log P_{\k }\right)\right) .
\]
In particular, $ \zeta_{P_{\k }}(0)= \zeta_{P}(0)$.
\end{enumerate}
\end{cor}
  
\begin{proof} 
\begin{enumerate}
\item The fact that the operator $[B,\log P]$ is classical follows from the fact that the symbol of $\log P $, derived by functional calculus, is $\tau(k):=2\log \langle  k\rangle$.  Indeed, if $\sigma$ denotes the symbol of $B$, it follows from (\ref{starproduct}) that the symbol of $B\, \log P-\log P\, B$ given by 
\begin{eqnarray*} 
(\sg \circ_\theta \tau-\tau\circ_\theta \sigma)(k)  
&=& 2\sum_{l\in \Z^n} \sigma_{l}(k)\,  \log\langle l+k\rangle\,  U_l -2\sum_{l\in \Z^n} \log \langle k\rangle_l\,  \sg(l+k)\,  U_l \\
&=& 2 \sum_{l\in \Z^n} \left( \log\langle l+k\rangle-  \log \langle k\rangle\right) \,  \sg_l(k) \,  U_l \\
\end{eqnarray*}
is classical.  Applying Theorem \ref{thm:KVPS} to the holomorphic family $A(z)= A\, [B,P^{-z}]$ for which $A(0)=0$ yields 
\begin{eqnarray*}
{\TR}_\theta^P([A,B])&=& \underset{z=0}{\fp}\left({\TR}_\theta ([A,B]\, P^{-z})\right)\\
&=& \underset{z=0}{\fp}\left({\TR}_\theta (A\,[B, P^{-z}])\right)\\
&=& -\frac{1}{2} \, {\Res}_\theta( A\, [B,\log P ]).
\end{eqnarray*} 
\item The difference $\log P_{{\k }}-\log P$ is classical. Indeed, as a consequence of Remark \ref{rk:leadingsymbol} the leading symbol of $P_{\k }$ is $e^{-\frac{h}{2}}  \langle  k\rangle^2 e^{-\frac{h}{2}}$. The  symbol of  $\log P_{{\k }}$ differs from  $2\log \langle k\rangle$ by a zero order classical symbol, hence the difference $\log P_{{\k}}-\log P$ is classical. Applying Theorem \ref{thm:KVPS} to the holomorphic family $A(z)= A\, \left(P_{{\k } }^{-z}-P^{-z}\right)$ for which $A(0)=0$ yields 
\begin{eqnarray*} 
{\TR}_\theta^{P_{{\k } }}(A)- {\TR}_\theta^P(A)
&=& \underset{z=0}{\fp}\left({\TR}_\theta \left(A\, \left(P_{{\k } }^{-z}-P^{-z}\right)\right)\right)\\
&=& \frac{1}{2} \, {\Res}_\theta\left( A\, \left(\log P -\log P_{{\k } }\right)  \right).
\end{eqnarray*}
When $A=I$ it follows that
\begin{eqnarray*} 
\zeta_{P_{{\k } }}(0)-\zeta_P(0)&= & \frac{1}{2} \, {\Res}_\theta\left(\left(\log P -\log P_{{\k } }\right)\right)\\
&=& \frac{1}{2} \,\int_0^1 \frac{d}{dt}{\Res}_\theta\left(\left(\log P_{{t{\k } }}\right) \right)\, dt\\
&=& \frac{1}{2} \,\int_0^1 {\Res}_\theta\left(\frac{d}{dt} \left(\log P_{{t{\k } }}\right) \right)\, dt\\
&=& \frac{1}{2} \,\int_0^1 {\Res}_\theta\left(P_{t{\k } }^{-1} \frac{d}{dt}P_{{t{\k } }}\right) \, dt\\
&=& -\frac{1}{4} \int_0^1 {\Res}_\theta\left(P_{{t{\k } }}^{-1} \left(h P_{{t{\k } }}+ P_{{t{\k } }}h  \right)  \right) \, dt\\
&=& -\frac{1}{2}\int_0^1 {\Res}_\theta\left(h \right)\, dt\quad{\rm since}\quad {\Res}_\theta \quad {\rm is} \quad {\rm cyclic}\\
&=& 0,
\end{eqnarray*}
using the fact that $\frac{d}{dt}$ commutes with ${\Res}_\theta$ as a result of the continuity of the normalised extension map and the compactness of the unit sphere on which the $(-n)$--th homogeneous part of the symbol is integrated. 
\end{enumerate}
\end{proof}

By means of Corollary \ref{cor:defect} part  2), we easily recover the conformal invariance of the $\zeta$--function of $1+{\bf \Delta}$ shown in \cite{CT}, \cite{CM1,CM2} and \cite{FK1,FK2} (there it is actually derived for the $\zeta$--function of ${\bf \Delta}$). Given a self--adjoint element $h=h^*$ in $\mathcal{A}_\theta$ we modify the normalised trace $\t$ introduced in \eqref{trace} into $\t_h$ defined for $a$ in $A_\theta$ by
\[
 \t_h(a):= \t(ae^{-h}).
\]
Note that this is not a trace anymore since for any $a,b$ in $A_\theta$
\[
 \t_h(ab)= \t(\k ab\k)=   \t_h(b\sigma_1(a) ),
\]
where we have set $\sigma_t(a):=e^{-th}ae^{th}$ and where as before $\k=e^{-\frac{h}{2}}$. 
The inner product on $A_\th$ is modified correspondingly
\[
 \langle a,b\rangle_h:=\t_h(b^*a)=\t(b^*ae^{-h})= \langle a\k,b\k\rangle \qquad \forall\, a,b\in A_\th,
\]
and we set $\H_{\t_h}$ to be the GNS Hilbert space corresponding to this modified inner product. The right multiplication by $\k$
\[
 R_{\k}(a)= a\, \k, \quad a\in A_\theta
\]
extends to an isometry $W:\H_{\t}\to \H_{\t_h}$ and gives a unitary $\mathcal{A}_\theta$--bimodule isomorphism. \\ 
The operators $\delta_j^h$ are the operators $\delta_j$ seen as operators on $\H_{\t_h}$ so that $ \delta_j^h\circ W=\delta_j\circ R_{\bf \Delta}$, $j=1,\cdots, n$. Thus
\[
 \mathbf{\Delta}_h:= \sum_{j=1}^n \left(\delta_j^h\right)^*\delta_j^h
\]
relates to $\mathbf{\Delta}_0=\mathbf{\Delta}$ by 
\[
 W^*\mathbf{\Delta}_hW=\sum_{j=1}^n  ( \delta_j^h\circ W)^* \delta_j^h\circ W= R_{\k}\mathbf{\Delta} R_k.
\]
Let $J$ be the involution on $\H_\t$ given by $J(a)=a^*$ for any $a$ in $A_\theta$ then $JR_{\k}J(a)=\left( a^*{\k}\right)^*= {\k}\, a$ for all $a\in A_\theta$ so that $JR_{\k}J=\k$ and 
\[
 J R_{\k} \mathbf{\Delta} R_{\k} J= J R_{\k} JJ\mathbf{\Delta} JJ R_{\k} J= \k\,\mathbf{\Delta} \,\k.
\]
Hence $\mathbf{\Delta}_h $ and $\k\,\mathbf{\Delta}\, \k$ are antiunitarily equivalent so that they share the same spectral properties. Applying Corollary \ref{cor:defect} part 2)  yields that $\zeta_{1+ \mathbf{\Delta}_h}(0)$ is independent of $h$ so that $\zeta_{1+ \mathbf{\Delta}}(0)$ defines a conformal invariant.

\renewcommand\indexname{Notation Index}

\begin{theindex}

  \item $A_\th$, 5
  \item $\A_\th$, 6
  \item $\circ_\theta$, 10
  \item $\Cl_\theta^m(\T^n)$, 18
  \item $\Cl_\th^{\Z}(\T^n)$, 18
  \item $\Cl_\th^{\notin \Z}(\T^n)$, 18
  \item $CS^{m}_{\B}(\Z^n)$, 15
  \item $CS_\B^m(\R^n)$, 15
  \item $CS^{<-n}_{\B}(\Z^n)$, 15
  \item $CS^{\Z}_{\B}(\Z^n)$, 15
  \item $CS_\B^{\Z}(\R^n)$, 15
  \item $CS_{\B}(\Z^n)$, 15
  \item $CS_\B(\R^n)$, 15
  \item $CS_{\B}^{\notin \Z}(\Z^n)$, 15
  \item $CS_\B^{\notin \Z}(\R^n)$, 15
  \item $\cutoffint^{\cutoff}_{\R^n}$, 19
  \item $\cutoffint_{\R^n}$, 20
  \item ${\textstyle\cutoffsum_{\Z^n}^{\mathrm{cut-off}}}$, 20
  \item $\cutoffsum_{\Z^n}$, 20
  \item $\cutoffsumtor$, 26
  \item ${\textstyle\cutoffsum_\th }$, 28
  \item $\delta^\a$, 6
  \item $\delta_j$, 6
  \item $\bar \delta_j$, 11
  \item $ \delta {\TR}_\theta^P(A,B)$, 37
  \item $\Delta_{j}$, 7
  \item $\Delta^\a$, 7
  \item $e_\C$, 13
  \item $HS^m_{\B}(\R^n)$, 14
  \item $\iota_\th$, 9
  \item $L^1(\Z^n,c)$, 5
  \item $\Op_\th$, 9
  \item $p_\a$, 6
  \item $p_{\a,\b}^{(m)}$, 8
  \item $\Psi_\th^m(\T^n)$, 9
  \item $\Psi_\th(\T^n)$, 9
  \item $\Psi^{-\infty}_\th(\T^n)$, 9
  \item $q_{N}$, 7
  \item $QS^m_{\B}(\R^n)$, 15
  \item $\res$, 19
  \item $\res_\th$, 28
  \item $\restor$, 25
  \item $\Res_\th$, 28
  \item $\{\sigma, \tau\}_\theta$, 11
  \item $S^m_{\B}(\Z^n)$, 7
  \item $S^m_{\B}(\R^n)$, 7
  \item $S_{\B}(\Z^n)$, 7
  \item $S_{\B}(\R^n)$, 7
  \item $S^{-\infty}_{\B}(\Z^n)$, 7
  \item $S^{-\infty}_{\B}(\R^n)$, 7
  \item $\sg_{[m-j]}$, 15
  \item $[\sg]_{m-j}$, 15
  \item $\t$, 6
  \item $\bar \t$, 9
  \item $T_l$, 7
  \item $\TR_\th$, 28
  \item ${\TR}_\theta^P$, 37
  \item $U_k$, 5
  \item $\zeta_P$, 37
  \item $\zeta_\theta(A,P)(z)$, 37

\end{theindex}

\end{document}